\newtheorem{thm}{Theorem}[section]
\newtheorem{lem}[thm]{Lemma}
\newtheorem{prop}[thm]{Proposition}
\def \N {\mathbb N}
\def \Z {\mathbb Z}
\def \R {\mathbb R}
\def \E {\mathbb E}
\numberwithin{equation}{section}
\begin{document}

\title{local variational principle concerning entropy\\ of a sofic group action}

\date{September 12, 2011}

\author{Guohua Zhang}

\address{School of Mathematical Sciences and LMNS, Fudan University, Shanghai 200433, China}

\email{chiaths.zhang@gmail.com}

\begin{abstract}
Recently Lewis Bowen introduced a notion of entropy for measure-preserving actions of countable sofic groups admitting a generating measurable partition with finite entropy; and then David Kerr and Hanfeng Li developed an operator-algebraic approach to actions of countable sofic groups not only on a standard probability space but also on a compact metric space, and established the global variational principle concerning measure-theoretic and topological entropy in this sofic context. By localizing these two kinds of entropy, in this paper we prove a local version of the global variational principle for any finite open cover of the space, and show that these local measure-theoretic and topological entropy coincide with their classical counterparts when the acting group is an infinite amenable group.
\end{abstract}

\maketitle

\markboth{local variational principle concerning entropy of a sofic group action}{Guohua Zhang}

%\tableofcontents

\section{Introduction}

Dynamical system theory is the study of the qualitative properties of group actions on spaces with certain structures.
In order to distinguish between two measure-preserving $\Z$-actions which are spectrally isomorphic, in 1958 Kolmogorov introduced an isomorphism invariant which is called measure-theoretic entropy in ergodic theory \cite{Ko58}. And then the concept of topological entropy was introduced in 1965 by Adler, Konheim and McAndrew for topological $\Z$-actions \cite{AKM}.
From then on the relationship between these two kinds of entropy has gained a lot of attention.

Let $(X, T)$ be a topological $\Z$-action, that is, $T: X\rightarrow X$ is a homeomorphism. Denote by $h_\xi (T, X)$ and $h (T, X)$ the $\xi$-measure-theoretic entropy and topological entropy of $X$, respectively, where $\xi$ is a $T$-invariant Borel probability measure over $X$. In his 1969 paper \cite{Goodwyn} Goodwyn showed that $h_\mu (T, X)\le h (T, X)$ for any $T$-invariant measure $\mu$ over $X$, and later Goodman \cite{Goodman} proved $\sup h_\nu (T, X)\ge h (T, X)$, where the supremum is taken over all $T$-invariant measures, completing the classical variational principle for $(X, T)$. See \cite{Misiu} for a short proof.

For topological $\Z$-actions, starting with the study of the topological analogue of Kolmogorov systems \cite{B1},
the entropy concept can be localized by defining entropy pairs and tuples (even entropy sets and entropy points) both in topological and in measure-theoretical situations \cite{B2, BHMMR, DYZ, HY, YZ}. To study the relation of entropy pairs and tuples in both settings, a local variational inequality \cite{BGH}, a local variational relation \cite{HY} and finally local variational principles concerning entropy \cite{GW4, R} which refine the classical variational principle were found. It was then generalized to the relative setting of a given factor map between topological $\Z$-actions \cite{HYZ1, HYZ2}, actions of a countable discrete amenable group on a compact metric space \cite{HYZ} and continuous bundle random dynamical systems of an infinite countable discrete amenable group action \cite{DZRDS}, respectively. For the whole  theory of it see the recent survey \cite{GY} by Glasner and Ye and the references therein.

Recently Lewis Bowen introduced a notion of entropy for measure-preserving actions of a countable discrete sofic group admitting a generating measurable partition with finite entropy \cite{BowenLJAMS10}. The basic idea is to model the dynamics of a measurable partition of the probability space by partitions of a finite space on which the group acts in an approximate way according to the definition of soficity. Given a fixed sequence of sofic approximations, the cardinality of the set of all such model partitions is then used to asymptotically generate a number, which is shown to be invariant over all generating measurable partitions with finite entropy. It may depend though on the choice of sofic approximation sequences, yielding in general a collection of entropy invariants. However, in the case that the acting group is amenable and the action admits a generating measurable partition with finite entropy, the sofic measure-theoretic entropy coincides with the classical Kolmogorov-Sinai entropy for all choices of a sofic approximation sequence \cite{BowenLETDS11}.

Just after that, in the spirit of Bowen's measure-theoretic entropy, David Kerr and Hanfeng Li developed an operator-algebraic approach to sofic entropy, which applies to not only sofic measure-theoretic entropy but also actions of any countable sofic group on a compact metric space, and established the global variational principle concerning measure-theoretic and topological entropy in this sofic context \cite{KLInvention1x}. In fact, they extended Bowen's sofic measure-theoretic entropy to actions that need not necessarily admit a generating measurable partition with finite entropy. The key to doing all of this is to view the corresponding dynamics at the operator level and replace Bowen's combinatorics of measurable partitions with an analysis of multiplicative or approximately multiplicative linear maps that are approximately equivariant. When the acting group is amenable, by expressing both measure-theoretic and topological entropy in terms of the dynamics on the space itself, these global invariants are shown to coincide with their classical counterparts, independently of the choice of a sofic approximation sequence \cite{KLAJM}.

To study the local properties of entropy for actions of a countable discrete sofic group on a compact metric space, the following question arises naturally: does the local variational principle concerning entropy hold in the sofic setting?

The main purpose of present paper is to answer this question by localizing the main results obtained in \cite{KLInvention1x, KLAJM}.
 Observe that, here, we are not to consider the separated and spanning subsets as in \cite{KLAJM}, alternatively, we are willing to consider finite open covers of the state space. The idea is not new, and it has been used first to consider sofic mean dimension by Li in \cite{Lsoficdim}.
Given an action of a countable discrete sofic group on a compact metric space and a sofic approximation sequence, in this paper for a finite open cover we introduce the measure-theoretic and topological entropy, and then we prove the local variational principle concerning these two kinds of entropy for any finite open cover of the space. In the case that the acting group is infinite and amenable, these local invariants are proved to coincide with their classical counterparts, independently of the choice of a sofic approximation sequence.
 In the proof of this equivalence, following the ideas of \cite[Section 6]{KLAJM} we use the ergodic decomposition of local measure-theoretic entropy \cite[Lemma 3.12]{HYZ} (for the case of $G= \Z$ see for example \cite[Lemma 4.8]{HY} or \cite[Theorem 8.4]{W}, see also \cite[Theorem 5.3]{HYZ1}), and so here we require $G$ to be infinite (see Lemma \ref{1107131641} and Lemma \ref{1107131627} for details).
When the acting group is finite, as the global measure-theoretic entropy is a conjugacy invariant \cite{KLInvention1x}, these two kinds of global measure-theoretical invariants were proved to be equivalent \cite[Lemma 6.5 and Lemma 6.6]{KLAJM}; whereas, the problem if these two kinds of local measure-theoretical invariants are equivalent remains open.
 Similar to the global case, the basis for our analysis of the equivalence of these local invariants is a sofic approximation version of the Rokhlin lemma of Ornstein and Weiss proved in \cite[Section 4]{KLAJM}, see also \cite[Section 4]{Elek} a form treating more generally the finite graphs.
As we could obtain the global invariants by taking the supremum for local invariants over all finite open covers of the space, the global results, including the global variational principle, follow directly from the local ones.

The paper is organized as follows. In section 2 we recall the amenability and soficity of group and introduce the equivalent definitions of measure-theoretic and topological entropy in the sofic setting, and then in section 3 we give some basic properties of them. In section 4 we prove the local variational principle concerning these two kinds of entropy. As a direct application of the obtained local variational principle, in section 5 we introduce and discuss entropy tuples both in topological and in measure-theoretical situations for actions of a countable discrete sofic group on a compact metric space. In section 6 we are willing to compare these local invariants with their classical counterparts in the setting of the group being amenable, in particular, we prove that if the group is infinite and amenable then they coincide with the classical ones.

\section{Preliminaries} \label{preli}

In this section, after recalling the amenability and soficity of group, for actions of a countable discrete sofic group on a compact metric space we introduce definitions of measure-theoretic and topological entropy for any finite open cover of the space in the sofic setting, which are equivalent to those from \cite{KLAJM}.

For each $a\in \R$ denote by $[a]$ and $\lceil a\rceil$ the largest integer smaller than $a$ and the smallest integer lager than $a$, respectively.

For $d\in \N$ we write $Sym (d)$ for the group of permutations of $\{1, \cdots, d\}$.

For a set $Z$ denote by $\mathcal{F}_Z$ the set of all non-empty finite subsets of $Z$ and by $|Z|$ its cardinality. Let $d\in \N$, we put $Z^d= \{(x_1, \cdots, x_d): x_1\in Z, \cdots, x_d\in Z\}$ and $\Delta_d (Z)= \{(x_1, \cdots, x_d): x_1= \cdots= x_d\in Z\}$.
For any map $\sigma: Z\rightarrow Sym (d), x\mapsto \sigma_x$ with some $d\in \N$, let $Y\subseteq Z, B\subseteq \{1, \cdots, d\}$ and $c\in \{1, \cdots, d\}$, we write $\sigma (Y) B= \{\sigma_y (b): y\in Y, b\in B\}$ and $\sigma (Y) c= \{\sigma_y (c): y\in Y\}$.

Recall that a countable discrete group $G$ is \emph{amenable}, if there exists a sequence $\{F_n: n\in \N\}\subseteq \mathcal{F}_G$ such that, for all $g_1, g_2\in G$,
$$\lim_{n\rightarrow \infty} \frac{|g_1 F_n g_2\Delta F_n|}{|F_n|}= 0.$$
Such a sequence $\{F_n: n\in \N\}\subseteq \mathcal{F}_G$ is called a \emph{F\o lner sequence} for $G$.
See for example \cite{Folner} and \cite[I.\S 0 and I.\S 1]{OW}.
Each finite discrete group is amenable; and if $\{F_n: n\in \N\}\subseteq \mathcal{F}_G$ is a F\o lner sequence for a finite group $G$ then $F_n= G$ for all large enough $n\in \N$, while, if $\{F_n: n\in \N\}\subseteq \mathcal{F}_G$ is a F\o lner sequence for an infinite amenable group $G$ then $\lim\limits_{n\rightarrow \infty} |F_n|= \infty$.

Throughout the whole paper, we will fix $G$ to be a countable discrete sofic group (with unit $e$). That is, \cite{KLAJM} there are a sequence $\{d_i: i\in \N\}\subseteq \N$ and a sequence $\{\sigma_i: i\in \N\}$ of maps $\sigma_i: G\rightarrow Sym (d_i), g\mapsto \sigma_{i, g}$ which is \emph{asymptotically multiplicative} and \emph{free} in the sense that
$$\lim_{i\rightarrow \infty} \frac{1}{d_i} |\{a\in \{1, \cdots, d_i\}: \sigma_{i, s t} (a)= \sigma_{i, s} \sigma_{i, t} (a)\}|= 1$$
for all $s, t\in G$ and
$$\lim_{i\rightarrow \infty} \frac{1}{d_i} |\{a\in \{1, \cdots, d_i\}: \sigma_{i, s} (a)\neq \sigma_{i, t} (a)\}|= 1$$
for all distinct $s, t\in G$. Such a sequence $\{\sigma_i: i\in \N\}$ with $\lim\limits_{i\rightarrow \infty} d_i= \infty$ is referred to as a \emph{sofic approximation sequence} of $G$, and we will fix it throughout the paper. Observe that the condition $\lim\limits_{i\rightarrow \infty} d_i= \infty$ is essential for the global variational principle concerning entropy of a sofic group action \cite{KLInvention1x} and is automatic if $G$ is infinite.
  Note that if $G$ is amenable then it is sofic, as one can easily construct a sofic approximation sequence from a F\o lner sequence.

Sofic groups were defined implicitly by Gromov in \cite{Gromov99} and explicitly by Weiss in \cite{Weiss-sofic}. Recently Pestov has written a beautiful up-to-date survey \cite{Pestov08} on sofic groups and their siblings, hyperlinear groups.

From now on, we assume that $G$ acts continuously on a compact metric space $(X, \rho)$ as a group of self-homeomorphisms over $X$.
Denote by $\mathcal{M} (X)$ the set of all Borel probability measures over $X$, by $\mathcal{M} (X, G)$ the set of all $G$-invariant Borel probability measures over $X$ and by $\mathcal{M}^e (X, G)$ the set of all ergodic $G$-invariant Borel probability measures over $X$, respectively. All of them are equipped with the well-known weak star topology. It is well known that if $G$ is amenable then $\mathcal{M}^e (X, G)\neq \emptyset$ always holds; whereas, in general it may happen $\mathcal{M} (X, G)= \emptyset$. For example for the rank two free group $F_2$, there exists a compact metric space $Y$ such that, $F_2$ acts as a group of homeomorphisms on $Y$, while, each Borel probability measure over $Y$ is not $F_2$-invariant.

Denote by $\mathcal{B}_X$ the Borel $\sigma$-algebra of $X$,
and by $C (X)$ the set of all real-valued continuous functions over $X$ which is equipped with the supremum norm $||\cdot||$.

By a \emph{cover} of $X$ we mean a family of subsets of $X$ with the whole space as its union. If elements of a cover are pairwise disjoint, then it is called a \emph{partition}.
Denote by $\mathcal{C}_X^o$ the set of all finite open covers of $X$,
 by $\mathcal{C}_X$ the set of all finite Borel covers of $X$, and by $\mathcal{P}_X$ the set of all finite Borel partitions of $X$.
For $\mathcal{V}_i\in \mathcal{C}_X, i= 1, 2$, we say that $\mathcal{V}_1$ is \emph{finer} than $\mathcal{V}_2$ if each element of $\mathcal{V}_1$ is contained in some element of $\mathcal{V}_2$ (denoted by $\mathcal{V}_1\succeq \mathcal{V}_2$).
For $\mathcal{V}\in \mathcal{C}_X$ and $\emptyset\neq K\subseteq X$ we set $N (\mathcal{V}, K)$ to be the minimal cardinality of sub-families of $\mathcal{V}$ covering $K$, we also set $N (\mathcal{V}, \emptyset)= 0$ by convention.

Now let's recall the following equivalent definitions from \cite{KLAJM}.

For $F\in \mathcal{F}_G, \delta> 0$ and a map $\sigma: G\rightarrow Sym (d), g\mapsto \sigma_g$ with $d\in \N$, we set
$$X^d_{F, \delta, \sigma}= \left\{(x_1, \cdots, x_d)\in X^d: \max_{s\in F} \sqrt{\sum_{i= 1}^d \frac{1}{d} \rho^2 (s x_i, x_{\sigma_s (i)})}< \delta\right\}.$$
Now let $L\in \mathcal{F}_{C (X)}$ and $\mu\in \mathcal{M} (X)$, we set
$$X^d_{F, \delta, \sigma, \mu, L}= \left\{(x_1, \cdots, x_d)\in X^d_{F, \delta, \sigma}: \max_{f\in L} |\frac{1}{d} \sum_{i= 1}^d f (x_i)- \mu (f)|< \delta\right\}.$$

Let $\mathcal{U}\in \mathcal{C}_X^o$. Put (with convention $\log 0 = - \infty$)
$$h_{F, \delta} (G, \mathcal{U})= \limsup_{i\rightarrow \infty} \frac{1}{d_i} \log N \left(\mathcal{U}^{d_i}, X^{d_i}_{F, \delta, \sigma_i}\right),$$
$$h_{F, \delta, \mu, L} (G, \mathcal{U})= \limsup_{i\rightarrow \infty} \frac{1}{d_i} \log N \left(\mathcal{U}^{d_i}, X^{d_i}_{F, \delta, \sigma_i, \mu, L}\right)\le h_{F, \delta} (G, \mathcal{U}),$$
and set
$$h (G, \mathcal{U})= \inf_{F\in \mathcal{F}_G} \inf_{\delta> 0} h_{F, \delta} (G, \mathcal{U})\le \log N (\mathcal{U}, X),$$
$$h_\mu (G, \mathcal{U})= \inf_{L\in \mathcal{F}_{C (X)}} \inf_{F\in \mathcal{F}_G} \inf_{\delta> 0} h_{F, \delta, \mu, L} (G, \mathcal{U})\le h (G, \mathcal{U}).$$
We define $h (G, \mathcal{U})$ (may take the value of $- \infty$) to be the \emph{topological entropy of $\mathcal{U}$} for the system $(X, G)$. Then we define the \emph{topological entropy} of $(X, G)$ as
$$h (G, X)= \sup_{\mathcal{U}\in \mathcal{C}_X^o} h (G, \mathcal{U}).$$
We define $h_\mu (G, \mathcal{U})$ (may also take the value of $- \infty$) to be the \emph{$\mu$-measure-theoretic entropy of $\mathcal{U}$} for the system $(X, G)$. Then we define the \emph{$\mu$-measure-theoretic entropy} of $(X, G)$ as
$$h_\mu (G, X)= \sup_{\mathcal{U}\in \mathcal{C}_X^o} h_\mu (G, \mathcal{U}).$$

Now let $\mathcal{V}\in \mathcal{C}_X$ (i.e. elements of $\mathcal{V}$ need not to be open sets). We define the \emph{$\mu$-measure-theoretic entropy of $\mathcal{V}$} for the system $(X, G)$ as
$$h_\mu (G, \mathcal{V})= \sup_{\mathcal{U}\in \mathcal{C}_X^o, \mathcal{V}\succeq \mathcal{U}} h_\mu (G, \mathcal{U}).$$
Observe that we could always find some $\mathcal{U}\in \mathcal{C}_X^o$ with $\mathcal{V}\succeq \mathcal{U}$ and, for $\mathcal{U}_i\in \mathcal{C}_X^o, i= 1, 2$ with $\mathcal{U}_1\succeq \mathcal{U}_2$ by the previous definition $h_\mu (G, \mathcal{U}_1)\ge h_\mu (G, \mathcal{U}_2)$. The above definition is well defined, and for $\mathcal{V}_i\in \mathcal{C}_X, i= 1, 2$ with $\mathcal{V}_1\succeq \mathcal{V}_2$ one has $h_\mu (G, \mathcal{V}_1)\ge h_\mu (G, \mathcal{V}_2)$.
Moreover, one has
$$h_\mu (G, X)= \sup_{\mathcal{V}\in \mathcal{C}_X} h_\mu (G, \mathcal{V})= \sup_{\alpha\in \mathcal{P}_X} h_\mu (G, \alpha).$$
Remark that the global measure-theoretic entropy is a conjugacy invariant \cite{KLInvention1x}, as two dynamically generating sequences
have the same entropy \cite[Theorem 4.5]{KLInvention1x}.

\section{Basic properties of sofic entropy} \label{1109111126}

In this section we are to discuss some basic properties of sofic entropy.

First let's check that these invariants are independent of the selection of a compatible metric over $X$. In fact, this follows from the following observation.

\begin{prop} \label{1107101636}
Let $\rho_1$ and $\rho_2$ be two compatible metrics over $X$ and $F\in \mathcal{F}_G$. Then for each $\delta_2> 0$ there exists $\delta_2\ge \delta_1> 0$ such that
$$X^d_{F, \delta_1, \sigma; \rho_1}\subseteq X^d_{F, \delta_2, \sigma; \rho_2}$$
      for each map $\sigma: G\mapsto Sym (d)$ with some $d\in \N$. Here, we use $X^d_{F, \delta_1, \sigma; \rho_1}$ and $X^d_{F, \delta_2, \sigma; \rho_2}$ to emphasize the corresponding metrics $\rho_1$ and $\rho_2$, respectively.
\end{prop}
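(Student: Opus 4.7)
The plan is to use compactness of $X$, which forces the two compatible metrics $\rho_1$ and $\rho_2$ to be uniformly equivalent, together with a simple Markov/Chebyshev-type estimate on the averaged squared distance. Fix $\delta_2 > 0$ and set $M = \mathrm{diam}_{\rho_2}(X)$, which is finite since $X$ is compact (the degenerate case $M = 0$ is trivial, so assume $M > 0$). By uniform equivalence, choose $\eta \in (0, \delta_2]$ such that $\rho_1(x, y) < \eta$ implies $\rho_2(x, y) < \delta_2 / \sqrt{2}$ for all $x, y \in X$. Then define $\delta_1 = \min\bigl\{\delta_2,\ \eta \delta_2 / (M \sqrt{2})\bigr\} > 0$.

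Now suppose $(x_1, \ldots, x_d) \in X^d_{F, \delta_1, \sigma; \rho_1}$, so that for every $s \in F$ we have $\frac{1}{d} \sum_{i=1}^d \rho_1^2(s x_i, x_{\sigma_s(i)}) < \delta_1^2$. Fix such an $s$ and partition the index set as
\[
I_s = \bigl\{i \in \{1, \ldots, d\} : \rho_1(s x_i, x_{\sigma_s(i)}) < \eta\bigr\}, \qquad I_s^c = \{1, \ldots, d\} \setminus I_s.
\]
Since each $i \in I_s^c$ contributes at least $\eta^2$ to the $\rho_1^2$ sum, the Markov-type bound gives
\[
\frac{|I_s^c|}{d} \cdot \eta^2 \leq \frac{1}{d} \sum_{i = 1}^d \rho_1^2(s x_i, x_{\sigma_s(i)}) < \delta_1^2,
\]
so $|I_s^c|/d < \delta_1^2 / \eta^2$.

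Splitting the $\rho_2$-sum along $I_s$ and $I_s^c$, the good indices contribute at most $(\delta_2/\sqrt{2})^2 = \delta_2^2/2$ on average (since each such term is bounded by $\delta_2^2/2$), while the bad indices contribute at most
\[
\frac{|I_s^c|}{d} \cdot M^2 < \frac{\delta_1^2 M^2}{\eta^2} \leq \frac{\delta_2^2}{2}
\]
by the choice of $\delta_1$. Adding these two contributions yields $\frac{1}{d} \sum_{i=1}^d \rho_2^2(s x_i, x_{\sigma_s(i)}) < \delta_2^2$, and taking the maximum over $s \in F$ gives $(x_1, \ldots, x_d) \in X^d_{F, \delta_2, \sigma; \rho_2}$, as required. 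No step is a serious obstacle; the only point requiring any care is the Markov-type passage from an $L^2$-average smallness condition to control of the fraction of indices on which the $\rho_1$-distance fails to be uniformly small, which is where the specific quantitative relation between $\delta_1$, $\eta$, $M$, and $\delta_2$ is used.
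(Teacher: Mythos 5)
Your proof is correct: the uniform equivalence of two compatible metrics on a compact space, the Chebyshev-type bound controlling the fraction of indices where $\rho_1(sx_i,x_{\sigma_s(i)})\ge\eta$, and the diameter bound on the exceptional set together give exactly the required inclusion, with all the quantitative choices of $\eta$ and $\delta_1$ consistent. The paper omits its own proof and refers to the proof of Lemma 2.4 of Li's sofic mean dimension paper, which is essentially this same argument, so your write-up matches the intended route.
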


We could obtain directly Proposition \ref{1107101636} from the proof of \cite[Lemma 2.4]{Lsoficdim}, and so here we omit its proof.

It was proved implicitly in the proof of \cite[Theorem 6.1]{KLInvention1x} that if $\mu\in \mathcal{M} (X)$ is not $G$-invariant then $h_\mu (G, X)= - \infty$, equivalently, if $h_\mu (G, X)\ge 0$ then $\mu\in \mathcal{M} (X, G)$, see also the proof of Theorem \ref{1107091735} along the same idea.

In the remainder of this section, we are to give some easy estimate of Bowen's measure-theoretic sofic entropy for a finite measurable cover.

Before proceeding, we need the following combinatorial result.

\begin{lem} \label{1107182209}
Let $p_1\ge \cdots\ge p_n> 0$ satisfy $\sum\limits_{k= 1}^n p_k= 1$ and $\epsilon> 0$. Then there exists $\eta> 0$ small enough such that once $\Lambda$ is a finite set with $|\Lambda|$ large enough, then
$$\log |\Gamma_{\eta, p_1, \cdots, p_n}|\le |\Lambda| \left(- \sum_{k= 1}^n p_k \log p_k+ 2 \epsilon\right),$$
where $\Gamma_{\eta, p_1, \cdots, p_n}$ denotes the set of all partitions $\{\gamma_1, \cdots, \gamma_n, \gamma_{n+ 1}\}$ of $\Lambda$ with
$$\max_{k= 1}^n |\frac{|\gamma_k|}{|\Lambda|}- p_k|< \eta.$$
\end{lem}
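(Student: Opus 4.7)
The plan is to group the partitions in $\Gamma_{\eta,p_1,\ldots,p_n}$ according to their size vector $(|\gamma_1|,\ldots,|\gamma_{n+1}|)$ and then bound the number of ordered partitions with a prescribed size vector by a multinomial coefficient. Writing $N = |\Lambda|$ and $r_k = |\gamma_k|/N$, the hypothesis forces $|r_k - p_k| < \eta$ for $k \leq n$ and, since $\sum_{k=1}^{n+1} r_k = 1$, also $r_{n+1} \in [0, n\eta)$. In particular there are at most $(2\eta N + 1)^n$ possible size vectors, a number that grows only polynomially in $N$.

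For each admissible size vector $(q_1,\ldots,q_{n+1})$, the number of ordered partitions realizing those sizes is exactly the multinomial coefficient $\binom{N}{q_1,\ldots,q_{n+1}}$. Evaluating the multinomial probability distribution with parameters $(r_1,\ldots,r_{n+1})$ at its mode gives the standard entropy bound
$$\binom{N}{q_1,\ldots,q_{n+1}} \leq \exp\bigl(N\cdot H(r_1,\ldots,r_{n+1})\bigr),$$
where $H(x_1,\ldots,x_m) = -\sum_k x_k \log x_k$ with the convention $0\log 0 = 0$. The key analytic input is that, by continuity of $H$ on the closed simplex together with $-t\log t \to 0$ as $t\to 0^+$, one may choose $\eta > 0$ sufficiently small (depending on $\epsilon$, $n$, and $p_1,\ldots,p_n$, and using $p_n > 0$ to keep each $r_k$ bounded away from $0$ for $k \leq n$) so that
$$H(r_1,\ldots,r_{n+1}) \leq -\sum_{k=1}^n p_k \log p_k + \epsilon$$
for every admissible size vector.

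Combining these two estimates yields
$$\log |\Gamma_{\eta,p_1,\ldots,p_n}| \leq n\log(2\eta N + 1) + N\Bigl(-\sum_{k=1}^n p_k \log p_k + \epsilon\Bigr),$$
and the first term is $o(N)$, hence absorbed into an additional $\epsilon N$ once $N$ is large enough, producing the claimed bound. I expect the only real obstacle to be the uniform continuity step for $H$ near the face $r_{n+1} = 0$, but this is routine because $-t\log t$ extends continuously to $0$ at $t = 0$; everything else reduces to the classical multinomial-entropy estimate and bookkeeping.
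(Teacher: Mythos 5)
Your argument is correct, and its skeleton is the same as the paper's: stratify $\Gamma_{\eta,p_1,\ldots,p_n}$ by the size vector $(|\gamma_1|,\ldots,|\gamma_{n+1}|)$, observe there are only polynomially many strata, and bound each stratum by an exponential whose rate is close to $-\sum_k p_k\log p_k$. The difference is in how that exponential bound is obtained. The paper splits off the residual class: it first chooses the set $\gamma_1\cup\cdots\cup\gamma_n$ (contributing $\sum_Q\binom{|\Lambda|}{Q}$ over $Q\ge(1-n\eta)|\Lambda|$, controlled by a separate Stirling estimate \eqref{1107182345}) and then applies Stirling to the product of binomials distributing those $Q$ elements among $\gamma_1,\ldots,\gamma_n$, where all frequencies are bounded below by roughly $p_n>0$. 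You instead treat all $n+1$ classes at once via the multinomial bound $\binom{N}{q_1,\ldots,q_{n+1}}\le\exp\bigl(N\,H(q_1/N,\ldots,q_{n+1}/N)\bigr)$, which needs no Stirling at all (it follows from $1=(\sum_k r_k)^N\ge\binom{N}{q_1,\ldots,q_{n+1}}\prod_k r_k^{q_k}$ with $r_k=q_k/N$ --- your ``mode'' phrasing is a slight misstatement, but the inequality you use is the standard and correct one), at the cost of having to verify that $H$ is uniformly continuous up to the face $r_{n+1}=0$ of the simplex; as you note, this is exactly the continuity of $-t\log t$ at $t=0$ and is routine. Your route is cleaner and avoids the double use of Stirling; the paper's route avoids any discussion of the boundary of the simplex by keeping the small class out of the entropy estimate. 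Both are complete proofs.
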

\begin{proof}
Set $p= - \sum\limits_{k= 1}^n p_k \log p_k$. By Stirling's approximation formula there is $\eta_1> 0$ with $\eta_1< p_n$ such that once $(a_1, \cdots, a_n)\in \N^n$ with $\sum\limits_{k= 1}^n a_k$ large enough satisfies
$$\max_{k= 1}^n |\frac{a_k}{a_1+ \cdots+ a_n} - p_k|\le \eta_1$$
then
$$\binom{a_1+ \cdots+ a_n}{a_n}\binom{a_1+ \cdots+ a_{n- 1}}{a_{n- 1}} \cdots \binom{a_1+ a_2}{a_2}\le e^{(a_1+ \cdots+ a_n) (p+ \epsilon)}.$$

Let $\Lambda$ be a finite set. Observe $\sum\limits_{k= 1}^n |\gamma_k|\ge |\Lambda| (1- n \eta)$ for any $\{\gamma_1, \cdots, \gamma_n, \gamma_{n+ 1}\}$ $\in \Gamma_{\eta, p_1, \cdots, p_n}$.
There is $\eta> 0$ with $n \eta< < 1$ such that once $|\Lambda|$ is large enough then
 $$\max_{k= 1}^n |\frac{|\gamma_k|}{|\gamma_1|+ \cdots+ |\gamma_n|}- p_k|< \eta_1$$
for any $\{\gamma_1, \cdots, \gamma_n, \gamma_{n+ 1}\}\in \Gamma_{\eta, p_1, \cdots, p_n}$ and
\begin{equation} \label{1107182345}
|\Lambda|^n \sum_{Q= \lceil |\Lambda| (1- n \eta)\rceil}^{|\Lambda|} \binom{|\Lambda|}{Q}\le e^{|\Lambda| \epsilon}.
\end{equation}
Here, applying again the Stirling's approximation formula such an $\eta> 0$ satisfying \eqref{1107182345} exists. Denote by $\Lambda_n$ the set of all $(a_1, \cdots, a_n)\in \N^n$ such that $a_1= |\gamma_1|, \cdots, a_n= |\gamma_n|$ for some $\{\gamma_1, \cdots, \gamma_n, \gamma_{n+ 1}\}\in \Gamma_{\eta, p_1, \cdots, p_n}$. From the above discussions, we have that once $|\Lambda|$ is large enough then
\begin{eqnarray*} \label{0007091735}
|\Gamma_{\eta, p_1, \cdots, p_n}|&\le & \sum_{Q= \lceil |\Lambda| (1- n \eta)\rceil}^{|\Lambda|} |\left\{\{\gamma_1, \cdots, \gamma_n, \gamma_{n+ 1}\}\in \Gamma_{\eta, p_1, \cdots, p_n}: \sum_{k= 1}^n |\gamma_k|= Q\right\}|\nonumber \\
&= & \sum_{Q= \lceil |\Lambda| (1- n \eta)\rceil}^{|\Lambda|} \binom{|\Lambda|}{Q} \sum_{(a_1, \cdots, a_n)\in \Lambda_n, \sum\limits_{k= 1}^n a_k= Q} \binom{Q}{a_n}\cdots \binom{a_2+ a_1}{a_2}\nonumber \\
&\le & \sum_{Q= \lceil |\Lambda| (1- n \eta)\rceil}^{|\Lambda|} \binom{|\Lambda|}{Q} Q^n e^{Q (p+ \epsilon)}\ (\text{by the selection of $\eta_1, \eta$})\nonumber \\
&\le & \sum_{Q= \lceil |\Lambda| (1- n \eta)\rceil}^{|\Lambda|} \binom{|\Lambda|}{Q} |\Lambda|^n e^{|\Lambda| (p+ \epsilon)}.
\end{eqnarray*}
Combined with \eqref{1107182345}, we obtain the conclusion readily.
\end{proof}

Let $\alpha\in \mathcal{P}_X$ and $\mu\in \mathcal{M} (X)$. Set (by convention $0 \log 0= 0$)
\begin{equation*}
H_\mu (\alpha)= - \sum_{A\in \alpha} \mu (A) \log \mu (A).
\end{equation*}

Then we have the following estimation.

\begin{lem} \label{1100131627}
Let $\mathcal{U}\in \mathcal{C}_X^o, \mu\in \mathcal{M} (X, G), \epsilon> 0$ and $\alpha\in \mathcal{P}_X$ satisfy $\alpha\succeq \mathcal{U}$. Then there exist $\delta> 0$ and $L\in \mathcal{F}_{C (X)}$ such that
$h_{\{e\}, \delta, \mu, L} (G, \mathcal{U})\le H_\mu (\alpha)+ 3 \epsilon$.
\end{lem}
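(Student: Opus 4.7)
The plan is to mimic the classical Shannon--McMillan style counting argument in the sofic setting and reduce the bound to Lemma~\ref{1107182209}. Enumerate the positive-measure atoms of $\alpha$ in decreasing order as $A_1,\ldots,A_n$ with $\mu(A_1)\ge\cdots\ge\mu(A_n)>0$ (atoms of zero measure contribute nothing to $H_\mu(\alpha)$ and may be discarded). Since $\alpha\succeq\mathcal{U}$, for each $k$ fix some $U_{\phi(k)}\in\mathcal{U}$ with $A_k\subseteq U_{\phi(k)}$. Using inner regularity of $\mu$ together with the metric structure of $X$, choose pairwise disjoint closed sets $K_k\subseteq A_k$ and pairwise disjoint open sets $V_k$ with $K_k\subseteq V_k\subseteq U_{\phi(k)}$ and with both $\mu(A_k\setminus K_k)$ and $\mu(V_k\setminus K_k)$ as small as desired. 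Urysohn's lemma then supplies $f_k\in C(X)$ satisfying $1_{K_k}\le f_k\le 1_{V_k}$, so that $|\mu(f_k)-\mu(A_k)|$ is small; set $L=\{f_1,\ldots,f_n\}$.

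For each tuple $(x_1,\ldots,x_d)\in X^d_{\{e\},\delta,\sigma,\mu,L}$, associate the \emph{approximate type} $(\tilde\gamma_1,\ldots,\tilde\gamma_n,\tilde\gamma_{n+1})$ defined by $\tilde\gamma_k=\{j:x_j\in V_k\}$ for $k\le n$ and $\tilde\gamma_{n+1}=\{j:x_j\notin\bigcup_{k\le n}V_k\}$; disjointness of the $V_k$ makes this a genuine partition of $\{1,\ldots,d\}$. Combining $f_k\le 1_{V_k}$ with the constraint $|\tfrac{1}{d}\sum_{j=1}^d f_k(x_j)-\mu(f_k)|<\delta$ yields $|\tilde\gamma_k|/d\ge\mu(f_k)-\delta\ge\mu(A_k)-\eta_1$, where $\eta_1$ gathers all the approximation errors. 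Summing these lower bounds against $\sum_k\mu(A_k)=1$ forces both $\max_{k\le n}\bigl||\tilde\gamma_k|/d-\mu(A_k)\bigr|<(n-1)\eta_1$ and $|\tilde\gamma_{n+1}|/d<n\eta_1$, and $\eta_1$ can be driven below any prescribed threshold by shrinking $\delta$ and the regularity tolerances.

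Now for the covering count. All tuples sharing a common approximate type are covered by the collection of products $\prod_j W_j\in\mathcal{U}^d$ with $W_j=U_{\phi(k)}$ when $j\in\tilde\gamma_k$ ($k\le n$) and $W_j$ an arbitrary element of $\mathcal{U}$ when $j\in\tilde\gamma_{n+1}$; this collection has size at most $|\mathcal{U}|^{|\tilde\gamma_{n+1}|}\le|\mathcal{U}|^{dn\eta_1}\le e^{d\epsilon}$, provided the parameters are tuned so that $n\eta_1\log|\mathcal{U}|<\epsilon$. Lemma~\ref{1107182209} applied with $p_k=\mu(A_k)$ bounds the number of admissible approximate types by $\exp(d(H_\mu(\alpha)+2\epsilon))$ for all sufficiently large $d$. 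Multiplying the two estimates gives $N(\mathcal{U}^{d_i},X^{d_i}_{\{e\},\delta,\sigma_i,\mu,L})\le\exp(d_i(H_\mu(\alpha)+3\epsilon))$ eventually in $i$, and passing to $\limsup$ delivers the claim. The principal technical nuisance is the cascaded choice of parameters --- first the tolerance $\eta$ in Lemma~\ref{1107182209}, then the regularity accuracy, then $\delta$ --- so that the deviations $\bigl||\tilde\gamma_k|/d-\mu(A_k)\bigr|$ fit inside Lemma~\ref{1107182209}'s allowed range while the ``garbage'' exponent $(|\tilde\gamma_{n+1}|/d)\log|\mathcal{U}|$ is absorbed into a single $\epsilon$; once the parameters are fixed, the rest is routine counting.
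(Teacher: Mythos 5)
Your proposal is correct and follows essentially the same route as the paper's proof: choose disjoint closed sets inside the atoms and disjoint open neighborhoods contained in the covering elements, use Urysohn functions as the test family $L$, assign to each tuple in $X^d_{\{e\},\delta,\sigma,\mu,L}$ the partition of indices by which open set the coordinate lies in, and then multiply the type count from Lemma~\ref{1107182209} by the $|\mathcal{U}|^{|\tilde\gamma_{n+1}|}$ garbage factor. The parameter cascade you describe (first $\eta$ from Lemma~\ref{1107182209}, then the regularity tolerance and $\delta$) is exactly the one used in the paper.
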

\begin{proof}
Say $A_1, \cdots, A_n, n\in \N$ to be the set of all atoms of $\alpha$ with positive $\mu$-measure. As $\alpha\succeq \mathcal{U}$, for each $k= 1, \cdots, n$ there exists $U_k\in \mathcal{U}$ with $A_k\subseteq U_k$.
Set $\tau= \min\limits_{k= 1}^n \mu (A_k)> 0$ and let $\tau> \eta> 0$ be given by Lemma \ref{1107182209} for $\mu (A_1), \cdots, \mu (A_n)$.

Let $\kappa> 0$ such that $\kappa\le \eta$ and $|\mathcal{U}|^\kappa< e^\epsilon$. Let $\delta> 0$ such that $n \delta< \frac{\kappa}{2}$.

By the regularity of $\mu$, for each $k= 1, \cdots, n$ there exist a function $0\le f_k\le 1$ in $C (X)$, a closed subset $B_k\subseteq A_k$ and an open subset $C_k\supseteq B_k$ with $\overline{C_k}\subseteq U_k$ such that $f_k|_{B_k}= 1, f_k|_{C_k^c}= 0, \mu (B_k)\ge \mu (A_k)- \delta$ and $\overline{C_1}, \cdots, \overline{C_n}$ are pairwise disjoint.
Set $L= \{f_1, \cdots, f_n\}\in \mathcal{F}_{C (X)}$.

Now
let $\sigma: G\rightarrow Sym (d)$ be a good enough sofic approximation for $G$ with some $d\in \N$, and so $d$ is large enough. For $(x_1, \cdots, x_d)\in X^d_{\{e\}, \delta, \sigma, \mu, L}$, we consider $\Lambda_k= \{a\in \{1, \cdots, d\}: x_a\in C_k\}$ for each $k= 1, \cdots, n$. Then, for each $k= 1, \cdots, n$,
\begin{equation} \label{1107242136}
\frac{|\Lambda_k|}{d}\ge \frac{1}{d} \sum_{i= 1}^d f_k (x_i)\ge \mu (f_k)- \delta\ge \mu (B_k)- \delta\ge \mu (A_k)- 2 \delta,
\end{equation}
and hence $|\Lambda_{n+ 1}|\le 2 n \delta d$, where $\Lambda_{n+ 1}= \{1, \cdots, d\}\setminus \bigcup\limits_{j= 1}^n \Lambda_j$, and
\begin{equation} \label{1107242139}
\frac{|\Lambda_k|}{d}\le \mu (A_k)+ 2 n \delta\ (\text{applying \eqref{1107242136} to each $k'\in \{1, \cdots, d\}\setminus \{k\}$}),
\end{equation}
as $\Lambda_1, \cdots, \Lambda_n$ are pairwise disjoint. Observe $2 n \delta< \kappa\le \eta$ and $d$ is large enough, by the selection of $\eta$ and $\kappa$ and using \eqref{1107242136} and \eqref{1107242139} we have
\begin{equation*} \label{1107242203}
\log N (\mathcal{U}^d, X^d_{\{e\}, \delta, \sigma, \mu, L})\le d (H_\mu (\alpha)+ 2 \epsilon)+ 2 n \delta d \log |\mathcal{U}|\le d (H_\mu (\alpha)+ 3 \epsilon).
\end{equation*}
Then the conclusion follows from the above estimation.
\end{proof}

For $\mathcal{V}_1, \mathcal{V}_2\in \mathcal{C}_X$, set $\mathcal{V}_1\vee \mathcal{V}_2= \{V_1\cap V_2: V_1\in \mathcal{V}_1, V_2\in \mathcal{V}_2\}$. It works similarly for any given finite elements from $\mathcal{C}_X$.

Let $\mathcal{V}\in \mathcal{C}_X$ and $F\in \mathcal{F}_G$, we write $\mathcal{V}_F= \bigvee\limits_{g\in F} g^{- 1} \mathcal{V}$. Let $\nu\in \mathcal{M} (X)$. Set
$$H_\nu (\mathcal{V})= \inf_{\alpha\in \mathcal{P}_X, \alpha\succeq \mathcal{V}} H_\nu (\alpha).$$
In fact, by \cite[Proposition 6]{R}, there exists a finite family $P (\mathcal{V})\subseteq \{\alpha\in \mathcal{P}_X: \alpha\succeq \mathcal{V}\}$ (depends only on $\mathcal{V}$, independent of $\nu\in \mathcal{M} (X)$) such that
\begin{equation} \label{1107160141}
H_\nu (\mathcal{V})= \min_{\alpha\in P (\mathcal{V})} H_\nu (\alpha).
\end{equation}
For $\mathcal{V}_1, \mathcal{V}_2\in \mathcal{C}_X$, obviously $H_\nu (\mathcal{V}_1)\ge H_\nu (\mathcal{V}_2)$ once $\mathcal{V}_1\succeq \mathcal{V}_2$.

Thus, as a direct corollary of Lemma \ref{1100131627}, we have:

\begin{prop} \label{1107242213}
Let $\mathcal{V}\in \mathcal{C}_X$ and $\mu\in \mathcal{M} (X, G)$. Then $h_\mu (G, \mathcal{V})\le H_\mu (\mathcal{V})$.
\end{prop}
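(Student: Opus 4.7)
The proposition is essentially a formal corollary of Lemma \ref{1100131627}, so the plan is to unpack the definitions and trace the inequality through a chain of suprema and infima.

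First I would fix an arbitrary $\mathcal{U}\in \mathcal{C}_X^o$ with $\mathcal{V}\succeq \mathcal{U}$ and an arbitrary $\alpha\in \mathcal{P}_X$ with $\alpha\succeq \mathcal{V}$. By transitivity of the refinement relation $\succeq$, we then have $\alpha\succeq \mathcal{U}$, so that $\alpha$ is an admissible partition to feed into Lemma \ref{1100131627}. Given $\epsilon>0$, that lemma produces $\delta>0$ and $L\in \mathcal{F}_{C(X)}$ with
$$h_{\{e\},\delta,\mu,L}(G,\mathcal{U})\le H_\mu(\alpha)+3\epsilon.$$

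Next I would invoke the very definition
$$h_\mu(G,\mathcal{U})=\inf_{L'\in \mathcal{F}_{C(X)}}\inf_{F\in \mathcal{F}_G}\inf_{\delta'>0} h_{F,\delta',\mu,L'}(G,\mathcal{U}),$$
specializing to $F=\{e\}$, $\delta'=\delta$, $L'=L$. This yields $h_\mu(G,\mathcal{U})\le H_\mu(\alpha)+3\epsilon$, and since $\epsilon>0$ was arbitrary, $h_\mu(G,\mathcal{U})\le H_\mu(\alpha)$.

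Finally I would take the infimum over $\alpha\in \mathcal{P}_X$ with $\alpha\succeq \mathcal{V}$ to obtain $h_\mu(G,\mathcal{U})\le H_\mu(\mathcal{V})$, and then the supremum over $\mathcal{U}\in \mathcal{C}_X^o$ with $\mathcal{V}\succeq \mathcal{U}$, which by the definition of $h_\mu(G,\mathcal{V})$ yields the desired bound
$$h_\mu(G,\mathcal{V})\le H_\mu(\mathcal{V}).$$
The main (indeed only) work was already done in Lemma \ref{1100131627}; there is no additional obstacle here, only a routine passage between suprema and infima. One minor sanity check I would perform along the way is that the set $\{\mathcal{U}\in\mathcal{C}_X^o:\mathcal{V}\succeq\mathcal{U}\}$ is non-empty and that the set $\{\alpha\in\mathcal{P}_X:\alpha\succeq\mathcal{V}\}$ is non-empty, both of which are immediate (for example, $P(\mathcal{V})$ provided by \eqref{1107160141} witnesses the latter).
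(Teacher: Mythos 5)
Your proof is correct and follows exactly the route the paper intends: the paper states Proposition \ref{1107242213} as a direct corollary of Lemma \ref{1100131627} without writing out the details, and your unpacking (transitivity of $\succeq$, specializing the infimum defining $h_\mu(G,\mathcal{U})$ to $F=\{e\}$ with the $\delta$ and $L$ supplied by the lemma, letting $\epsilon\to 0$, then passing to the infimum over $\alpha\succeq\mathcal{V}$ and the supremum over $\mathcal{U}\preceq\mathcal{V}$) is precisely the omitted routine argument.
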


Remark that as a direct corollary of \cite[Lemma 5.1]{BowenLJAMS10}, for $\mu\in \mathcal{M} (X, G)$ if $(X, G, \mu)$ admits some generating partition $\alpha\in \mathcal{P}_X$ in the sense that for each $A\in \mathcal{B}_X$ there exists $B\in \mathcal{B}_{X, \alpha}$ satisfying $\mu (A\Delta B)= 0$, where $\mathcal{B}_{X, \alpha}$ denotes the smallest $G$-invariant sub-$\sigma$-algebra of $\mathcal{B}_X$ containing all atoms of $\alpha$, then $h_\mu (G, X)\le H_\mu (\alpha)$. The author thanks Li for pointing out this point.

\section{Local variational principle concerning sofic entropy}

The global variational principle concerning entropy of a sofic group action is proved by Kerr and Li \cite[Theorem 6.1]{KLInvention1x}. In this section, we aim to prove a local version of it following the line of \cite{KLInvention1x}.

Our local variational principle concerning sofic entropy is stated as follows.

\begin{thm} \label{1107091735}
Let $\mathcal{U}\in \mathcal{C}_X^o$. Then
$$h (G, \mathcal{U})= \max_{\mu\in \mathcal{M} (X, G)} h_\mu (G, \mathcal{U}).$$
In right hand of the above formula, we set it as $- \infty$ by convention if $\mathcal{M} (X, G)= \emptyset$.
\end{thm}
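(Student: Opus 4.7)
The inequality $h_\mu(G,\mathcal{U}) \le h(G,\mathcal{U})$ for every $\mu \in \mathcal{M}(X)$ is immediate from the definitions: since $X^d_{F,\delta,\sigma,\mu,L} \subseteq X^d_{F,\delta,\sigma}$, monotonicity of $N(\mathcal{U}^{d_i},\cdot)$ gives $h_{F,\delta,\mu,L}(G,\mathcal{U}) \le h_{F,\delta}(G,\mathcal{U})$, and then two applications of ``$\inf$'' yield $h_\mu(G,\mathcal{U}) \le h(G,\mathcal{U})$. The substance of the theorem is, whenever $h(G,\mathcal{U}) > -\infty$, to produce some $\mu \in \mathcal{M}(X,G)$ realising this upper bound; this will at once show both that the supremum is attained and that $\mathcal{M}(X,G) = \emptyset$ forces $h(G,\mathcal{U}) = -\infty$.

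Adapting the proof of \cite[Theorem 6.1]{KLInvention1x} to the local setting, I plan to build $\mu$ by a pigeonhole and weak-$*$ compactness argument applied to the empirical measures $\nu_z := \frac{1}{d_i}\sum_{\ell=1}^{d_i}\delta_{z_\ell}$ attached to points $z = (z_1, \dots, z_{d_i}) \in X^{d_i}_{F,\delta,\sigma_i}$. Fix an exhausting sequence $F_n \uparrow G$ in $\mathcal{F}_G$, a sequence $\delta_n \downarrow 0$, and an increasing sequence $L_n \in \mathcal{F}_{C(X)}$ with $L_n \circ F_n \subseteq L_n$ and $\bigcup_n L_n$ dense in $C(X)$. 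For each $n$ cover $\mathcal{M}(X)$ by finitely many weak-$*$ open neighbourhoods $O^n_1, \dots, O^n_{k(n)}$ of $L_n$-diameter less than $\delta_n$, and partition $X^{d_i}_{F_n,\delta_n,\sigma_i}$ into the pieces $Z^{n,j}_i = \{z : \nu_z \in O^n_j\}$. Subadditivity of $N(\mathcal{U}^{d_i},\cdot)$ produces some index $j(i,n)$ with
\begin{equation*}
\log N(\mathcal{U}^{d_i}, Z^{n,j(i,n)}_i) \;\ge\; \log N(\mathcal{U}^{d_i}, X^{d_i}_{F_n,\delta_n,\sigma_i}) - \log k(n).
\end{equation*}
Passing to a subsequence of $i$'s on which $j(i,n)$ stabilises to some $j(n)$ and the relevant $\limsup$ is realised, I select any $\mu_n \in O^n_{j(n)}$, and by compactness of $\mathcal{M}(X)$ refine further so that $\mu_n \to \mu$ weak-$*$.

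Two checks remain. First, $\mu \in \mathcal{M}(X,G)$: for any $z \in X^{d_i}_{F_n,\delta_n,\sigma_i}$, $s \in F_n$ and $f \in L_n$, uniform continuity of $f$ combined with $\frac{1}{d_i}\sum_\ell \rho^2(sz_\ell, z_{\sigma_{i,s}(\ell)}) < \delta_n^2$ gives $|\nu_z(f\circ s) - \frac{1}{d_i}\sum_\ell f(z_{\sigma_{i,s}(\ell)})| \to 0$, while the right-hand sum equals $\nu_z(f)$ because $\sigma_{i,s}$ is a permutation. Since both $f$ and $f\circ s$ lie in $L_n$ and satisfy $|\nu_z(\cdot) - \mu_n(\cdot)| < \delta_n$, we deduce $|\mu_n(f\circ s) - \mu_n(f)| \to 0$ as $n \to \infty$, and then $\mu_n \to \mu$ yields $\mu(f\circ s) = \mu(f)$ on a dense subset of $C(X)$, hence for all $f \in C(X)$ by continuity. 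Second, $h_\mu(G,\mathcal{U}) \ge h(G,\mathcal{U})$: given any $F \in \mathcal{F}_G$, $\delta > 0$, $L \in \mathcal{F}_{C(X)}$, pick $n$ so large that $F_n \supseteq F$, $L_n \supseteq L$, $\delta_n < \delta/2$, and $|\mu_n(f) - \mu(f)| < \delta/2$ for all $f \in L$; then $Z^{n,j(n)}_i \subseteq X^{d_i}_{F,\delta,\sigma_i,\mu,L}$ along the subsequence, so $h_{F,\delta,\mu,L}(G,\mathcal{U}) \ge h_{F_n,\delta_n}(G,\mathcal{U}) \ge h(G,\mathcal{U})$, and taking infima over $F, \delta, L$ gives $h_\mu(G,\mathcal{U}) \ge h(G,\mathcal{U})$. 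The main obstacle I anticipate is the combined diagonal extraction over $(F_n, \delta_n, L_n, i)$: one must simultaneously arrange that the pigeonhole loss $\log k(n)/d_i$ vanishes in the final $\limsup$, that the $L_n$-proximity of $\nu_z$ to $\mu_n$ survives the weak-$*$ passage $\mu_n \to \mu$, and that the $\limsup$ structure in the definitions is not degraded by the successive subsequence refinements.
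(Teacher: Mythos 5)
Your proposal is essentially the paper's own proof: the pigeonhole over a finite weak-$*$ net of $\mathcal{M}(X)$ for each $(F_n,\delta_n,L_n)$ is exactly Lemma \ref{1107091749}, the subsequence extractions and the inclusion $Z^{n,j(n)}_i\subseteq X^{d_i}_{F,\delta,\sigma_i,\mu,L}$ reproduce the body of the paper's argument, and the $G$-invariance of $\mu$ is likewise extracted from the approximate equivariance of points in the nonempty microstate spaces (the paper does this a posteriori, using $h_\mu(G,\mathcal{U})>-\infty$ with the test pair $\{p,p\circ g\}$ and tolerance $\delta_1/k$, rather than by enlarging $L_n$ in advance). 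One small repair to your variant: the condition $L_n\circ F_n\subseteq L_n$ cannot hold for a finite $L_n$ when an orbit $\{f\circ s:s\in\langle F_n\rangle\}$ is infinite; what your invariance check actually needs is only that for each $f$ in the countable dense set and each $s\in G$ both $f$ and $f\circ s$ eventually belong to $L_n$, which is arranged by exhausting the countable family $\{f_m\}_{m\in\N}\cup\{f_m\circ s: m\in\N,\ s\in G\}$.
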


 Observe that the global variational principle follows from the local one by taking the supremum on both hands over all finite open covers of the state space. We should remark that the proof of \cite[Theorem 6.1]{KLInvention1x} gives another local variational principle different from the result we are to prove.

Before proving Theorem \ref{1107091735}, let's first prove the following result.

\begin{lem} \label{1107091749}
Let $\mathcal{U}\in \mathcal{C}_X^o$ and $F\in \mathcal{F}_G, L\in \mathcal{F}_{C (X)}, \delta> 0$. Then there exists
 $\nu\in \mathcal{M} (X)$ satisfying $h_{F, \delta, \nu, L} (G, \mathcal{U})\ge h_{F, \delta} (G, \mathcal{U})$.
\end{lem}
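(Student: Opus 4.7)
The plan is to exploit that the only constraint separating $X^d_{F,\delta,\sigma,\nu,L}$ from $X^d_{F,\delta,\sigma}$ acts on the tuple $x=(x_1,\ldots,x_d)\in X^d$ through its empirical measure $\hat{\nu}_x:=\frac{1}{d}\sum_{i=1}^d\delta_{x_i}\in\mathcal{M}(X)$, combined with a compactness-plus-pigeonhole step that produces a single $\nu$. I would endow $\mathcal{M}(X)$ with the continuous pseudo-metric $d_L(\mu,\nu)=\max_{f\in L}|\mu(f)-\nu(f)|$; by weak$^*$-compactness of $\mathcal{M}(X)$, one obtains finitely many $\mu_1,\ldots,\mu_m\in\mathcal{M}(X)$, with $m$ depending only on $L$ and $\delta$ (in particular independent of $d$), whose open $d_L$-balls of radius $\delta$ cover $\mathcal{M}(X)$. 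Since membership in $X^d_{F,\delta,\sigma,\mu_k,L}$ is equivalent to $x\in X^d_{F,\delta,\sigma}$ together with $d_L(\hat{\nu}_x,\mu_k)<\delta$, this yields
\begin{equation*}
X^d_{F,\delta,\sigma}\subseteq\bigcup_{k=1}^m X^d_{F,\delta,\sigma,\mu_k,L},
\end{equation*}
and by the evident subadditivity of $N(\mathcal{U}^d,\cdot)$ over finite unions there is some index $k(d)\in\{1,\ldots,m\}$ satisfying $N\bigl(\mathcal{U}^d,X^d_{F,\delta,\sigma,\mu_{k(d)},L}\bigr)\ge \tfrac{1}{m}N\bigl(\mathcal{U}^d,X^d_{F,\delta,\sigma}\bigr)$.

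To promote this $d$-dependent choice of index into a single $\nu$, I would first select a subsequence $\{i_\ell\}\subseteq\N$ along which $\frac{1}{d_{i_\ell}}\log N\bigl(\mathcal{U}^{d_{i_\ell}},X^{d_{i_\ell}}_{F,\delta,\sigma_{i_\ell}}\bigr)$ converges to $h_{F,\delta}(G,\mathcal{U})$, and then extract a further subsequence along which $k(d_{i_\ell})$ is constantly some $k_0\in\{1,\ldots,m\}$, which is possible since the range is finite. Setting $\nu:=\mu_{k_0}\in\mathcal{M}(X)$ and taking $\limsup$, the additive error $\frac{\log m}{d_{i_\ell}}$ vanishes since $m$ is fixed and $d_i\to\infty$, giving
\begin{equation*}
h_{F,\delta,\nu,L}(G,\mathcal{U})\ge \lim_{\ell\to\infty}\frac{1}{d_{i_\ell}}\log N\bigl(\mathcal{U}^{d_{i_\ell}},X^{d_{i_\ell}}_{F,\delta,\sigma_{i_\ell}}\bigr)=h_{F,\delta}(G,\mathcal{U}),
\end{equation*}
as required.

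There is no genuinely hard step. The only mildly delicate point I would be careful about is the order of operations in the second paragraph: one must first pass to a subsequence realising the $\limsup$ defining $h_{F,\delta}(G,\mathcal{U})$ and only afterwards apply pigeonhole on $\{1,\ldots,m\}$, since otherwise the resulting $\nu$ might fail to witness the full $\limsup$. Beyond this, the argument is purely a weak$^*$-covering combined with a counting estimate; the degenerate case $h_{F,\delta}(G,\mathcal{U})=-\infty$ (when $X^{d_i}_{F,\delta,\sigma_i}$ is eventually empty) is trivial, as any $\nu\in\mathcal{M}(X)$ then satisfies the stated inequality.
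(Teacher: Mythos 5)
Your proposal is correct and follows essentially the same route as the paper: a finite $\delta$-net of $\mathcal{M}(X)$ with respect to the functions in $L$ (the paper's finite set $D$ plays the role of your $\mu_1,\dots,\mu_m$), the union bound and pigeonhole on covering numbers, and passage to a subsequence realising the $\limsup$ on which the chosen measure is constant. Your explicit remark about the order of the two subsequence extractions is a point the paper glosses over, but it is the intended reading.
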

\begin{proof}
Observe that $\mathcal{M} (X)$ is a compact metric space (induced naturally by the metric $\rho$ on $X$), there exists $D\in \mathcal{F}_{\mathcal{M} (X)}$ such that once $d\in \N$ and $(x_1, \cdots, x_d)\in X^d$ there exists $\nu (x_1, \cdots, x_d)\in D$ with
$$\max_{f\in L} |\frac{1}{d} \sum_{i= 1}^d f (x_i)- \int_X f d \nu (x_1, \cdots, x_d)|< \delta.$$

Now for any map $\sigma: G\rightarrow Sym (d)$ with some $d\in \N$, we introduce
$$X^d_{F, \delta, \sigma, L} (\xi)= \{(x_1, \cdots, x_d)\in X^d_{F, \delta, \sigma}: \nu (x_1, \cdots, x_d)= \xi\}\subseteq X^d_{F, \delta, \sigma, \xi, L}$$
for each $\xi\in D$.
Observe
$$\bigcup_{\xi\in D} X^d_{F, \delta, \sigma, L} (\xi)= X^d_{F, \delta, \sigma},$$
there exists $\nu_\sigma\in D$ such that
$$N (\mathcal{U}^d, X^d_{F, \delta, \sigma, \nu_\sigma, L})\ge \frac{N (\mathcal{U}^d, X^d_{F, \delta, \sigma})}{|D|}.$$

Now apply the above discussion to each $\sigma_i, i\in \N$, and by taking a sub-sequence we may assume that $\nu= \nu_{\sigma_i}$ for each $i\in \N$. Then $h_{F, \delta, \nu, L} (G, \mathcal{U})\ge h_{F, \delta} (G, \mathcal{U})$ follows directly from the definitions.
\end{proof}

Now let's turn to the proof of Theorem \ref{1107091735}.

\begin{proof}[Proof of Theorem \ref{1107091735}]
By the convention, it is direct to obtain
$$h (G, \mathcal{U})\ge \sup_{\mu\in \mathcal{M} (X, G)} h_\mu (G, \mathcal{U})$$
from the definitions. Thus it suffices to prove
$$\max_{\mu\in \mathcal{M} (X, G)} h_\mu (G, \mathcal{U})\ge h (G, \mathcal{U}).$$
By our convention we may assume that $h (G, \mathcal{U})> -\infty$, and so we only need to find some $\mu\in \mathcal{M} (X, G)$ with $h_\mu (G, \mathcal{U})\ge h (G, \mathcal{U})$.

In $\mathcal{F}_G$ we take a sequence $F_1\subseteq F_2\subseteq \cdots$ with
$\bigcup\limits_{i= 1}^\infty F_i= G$
and let $\{f_n: n\in \N\}$ be a countable dense subset in $C (X)$. Now set $L_n= \{f_1, \cdots, f_n\}$ for each $n\in \N$.

Let $n\in \N$. By Lemma \ref{1107091749}, there exists $\nu_n\in \mathcal{M} (X)$ such that
\begin{equation} \label{1107092037}
h_{F_n, \frac{1}{n}, \nu_n, L_n} (G, \mathcal{U})\ge h_{F_n, \frac{1}{n}} (G, \mathcal{U})\ge h (G, \mathcal{U}).
\end{equation}
As $\mathcal{M} (X)$ is a compact metric space, by taking a sub-sequence we may assume that $\{\nu_n: n\in \N\}$ converges to $\mu$ in $\mathcal{M} (X)$. Now we are to prove that the constructed $\mu$ has the required property.

Let $F\in \mathcal{F}_G, L\in \mathcal{F}_{C (X)}$ and $\delta> 0$. By the above constructions there exists $n\in \N$ such that
\begin{enumerate}

\item $\frac{3}{n}< \delta$ and $F\subseteq F_n$;

\item $\nu_n$ is sufficiently close to $\mu$; and

\item for each $f\in L$ there exists $g_n\in L_n$ such that $g_n$ is sufficiently close to $f$.
\end{enumerate}
In particular, $X^d_{F_n, \frac{1}{n}, \sigma, \nu_n, L_n}\subseteq X^d_{F, \delta, \sigma, \mu, L}$ for all maps $\sigma: G\rightarrow Sym (d)$ for some $d\in \N$. Now applying it to each $\sigma_i, i\in \N$ we obtain
$$h_{F, \delta, \mu, L} (G, \mathcal{U})\ge h_{F_n, \frac{1}{n}, \nu_n, L_n} (G, \mathcal{U})\ge h (G, \mathcal{U})\ (\text{using \eqref{1107092037}}).$$
By the arbitrariness of $F, \delta, L$ we obtain $h_\mu (G, \mathcal{U})\ge h (G, \mathcal{U})$.

Just as remarked in section \ref{1109111126}, the constructed $\mu$ should be $G$-invariant as $h (G, \mathcal{U})> - \infty$ by the assumption. Whereas, following the ideas of \cite[Theorem 6.1]{KLInvention1x} here we are to give a detailed direct proof of it for completeness.

In order to prove $\mu\in \mathcal{M} (X, G)$. Let $p\in C (X)$ and $g\in G$. We only need prove $\mu (p)= \mu (p\circ g)$.
Let $\epsilon> 0$. As $p\in C (X)$ there exists $\epsilon\ge \delta_1> 0$ such that
\begin{equation} \label{1107092058}
\sup_{(y_1, y_2)\in X^2, \rho (y_1, y_2)< \delta_1} |p (y_1)- p (y_2)|< \epsilon.
\end{equation}
Now let $k\in \N$ such that $\frac{2 ||p||}{k^2}< \epsilon$.
As
$$h_{\{g\}, \frac{\delta_1}{k}, \mu, \{p, p\circ g\}} (G, \mathcal{U})\ge h_\mu (G, \mathcal{U})\ge h (G, \mathcal{U})> - \infty,$$
there exists at least a map $\sigma: G\rightarrow Sym (d)$ for some $d\in \N$ such that
$$X^d_{\{g\}, \frac{\delta_1}{k}, \sigma, \mu, \{p, p\circ g\}}\neq \emptyset.$$
 Let $(x_1, \cdots, x_d)\in X^d_{\{g\}, \frac{\delta_1}{k}, \sigma, \mu, \{p, p\circ g\}}$. Then
\begin{equation} \label{1107092105}
\sqrt{\sum_{i= 1}^d \frac{1}{d} \rho^2 (g x_i, x_{\sigma_g (i)})}< \frac{\delta_1}{k}
\end{equation}
and
\begin{equation} \label{1107092106}
\max_{f\in \{p, p\circ g\}} |\frac{1}{d} \sum_{i= 1}^d f (x_i)- \mu (f)|< \delta_1.
\end{equation}
Consider
$J= \{i\in \{1, \cdots, d\}: \rho (g x_i, x_{\sigma_g (i)})\ge \delta_1\}$.
By \eqref{1107092105}, one has
\begin{equation} \label{1107101610}
\frac{\delta_1}{k}> \delta_1\cdot \sqrt{\frac{|J|}{d}},\ \text{and so}\ |J|< \frac{d}{k^2}.
\end{equation}
As if $i\in \{1, \cdots, d\}\setminus J$ then $\rho (g x_i, x_{\sigma_g (i)})< \delta_1$ and so $|p (g x_i)- p (x_{\sigma_g (i)})|< \epsilon$ (using \eqref{1107092058}).
Thus, one has
\begin{eqnarray} \label{1107092109}
|\frac{1}{d} \sum_{i= 1}^d p (x_i)- \frac{1}{d} \sum_{i= 1}^d p\circ g (x_i)|&= & |\frac{1}{d} \sum_{i= 1}^d p (x_{\sigma_g (i)})- \frac{1}{d} \sum_{i= 1}^d p (g x_i)|\nonumber \\
&\le & \frac{1}{d} \sum_{i= 1}^d |p (x_{\sigma_g (i)})- p (g x_i)|\nonumber \\
&\le & \frac{1}{d} (|\{1, \cdots, d\}\setminus J| \epsilon+ |J| 2 ||p||)\nonumber \\
&< & \epsilon+ \frac{2 ||p||}{k^2}\ (\text{using \eqref{1107101610}})< 2 \epsilon.
\end{eqnarray}
Then combining \eqref{1107092106} and \eqref{1107092109}, we obtain
$$|\mu (p)- \mu (p\circ g)|< 2 \delta_1+ 2 \epsilon\le 4 \epsilon.$$
By the arbitrariness of $\epsilon$ we obtain $\mu (p)= \mu (p\circ g)$. This finishes the proof.
\end{proof}

\section{Entropy tuples of a sofic group action}

As a direct corollary of the local variational principle proved in the previous section, in this section we discuss some local properties of entropy for actions of a countable discrete sofic group on a compact metric space.

Let $(x_1, \cdots, x_n)\in X^n\setminus \Delta_n (X), n\in \N\setminus \{1\}$ and $\mu\in \mathcal{M} (X)$.
\begin{enumerate}

\item $(x_1, \cdots, x_n)$ is called an \emph{entropy $n$-tuple} of $(X, G)$ if $h (G, \mathcal{U})> 0$ once $\mathcal{U}= \{U_1^c, \cdots, U_n^c\}\in \mathcal{C}_X^o$ where $U_i$ is a closed neighborhood of $x_i, i= 1, \cdots, n$.

\item $(x_1, \cdots, x_n)$ is called a \emph{$\mu$-entropy $n$-tuple} of $(X, G)$ if $h_\mu (G, \mathcal{U})> 0$ once $\mathcal{U}= \{U_1^c, \cdots, U_n^c\}\in \mathcal{C}_X^o$ where $U_i$ is a closed neighborhood of $x_i, i= 1, \cdots, n$.
\end{enumerate}
Denote by $E_n (X, G)$ and $E_n^\mu (X, G)$ the set of all entropy $n$-tuples and all $\mu$-entropy $n$-tuples of $(X, G)$, respectively.

As if $\mu\in \mathcal{M} (X)$ is not $G$-invariant then $h_\mu (G, X)= - \infty$ and so $E_n^\mu (X, G)= \emptyset$ by the definitions. In the following we are only interested in the case of $\mu\in \mathcal{M} (X, G)$.

It is not hard to obtain the following results along the lines of \cite{B2, BHMMR}.

\begin{prop} \label{1107092127}
Let $\mu\in \mathcal{M} (X, G)$ and $\mathcal{U}= \{U_1, \cdots, U_n\}\in \mathcal{C}_X^o, \mathcal{V}= \{V_1, \cdots, V_n\}$ $\in \mathcal{C}_X, n\in \N\setminus \{1\}$.
\begin{enumerate}

\item If $h (G, \mathcal{U})> 0$ then there exists $x_i\in U_i^c$ for each $i= 1, \cdots, n$ such that $(x_1, \cdots, x_n)\in E_n (X, G)$.

\item If $h_\mu (G, \mathcal{V})> 0$ then there exists $x_i\in V_i^c$ for each $i= 1, \cdots, n$ such that $(x_1, \cdots, x_n)\in E_n^\mu (X, G)$.
\end{enumerate}
\end{prop}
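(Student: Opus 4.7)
The plan is to adapt the Zorn's lemma / minimal-closed-tuple argument of Blanchard and coauthors (cf.\ \cite{B2, BHMMR}) to the sofic setting. The essential technical input is the subadditivity of sofic entropy under joins of covers,
\[
h(G, \mathcal{V}_1 \vee \mathcal{V}_2) \le h(G, \mathcal{V}_1) + h(G, \mathcal{V}_2), \qquad h_\mu(G, \mathcal{V}_1 \vee \mathcal{V}_2) \le h_\mu(G, \mathcal{V}_1) + h_\mu(G, \mathcal{V}_2),
\]
which follows from $N(\mathcal{V}_1 \vee \mathcal{V}_2, Z) \le N(\mathcal{V}_1, Z) N(\mathcal{V}_2, Z)$ applied to $Z = X^{d_i}_{F,\delta,\sigma_i}$ (resp.\ $Z = X^{d_i}_{F,\delta,\sigma_i,\mu,L}$) together with the monotonicity of $h_{F,\delta}$ and $h_{F,\delta,\mu,L}$ in $F$, $L$, and $1/\delta$: the defining infima for $h$ and $h_\mu$ are then directed limits along these monotone nets, and the limit of a sum of monotone nets equals the sum of limits.

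For part~(1), I introduce the poset $\mathcal{Q}$ of $n$-tuples $(A_1, \ldots, A_n)$ of non-empty closed sets $A_i \subseteq U_i^c$ with the following property: whenever $C_i$ are closed sets with $A_i \subseteq C_i^\circ$ and $\{C_1^c, \ldots, C_n^c\}$ is an open cover of $X$, one has $h(G, \{C_1^c, \ldots, C_n^c\}) > 0$. The initial tuple $(U_1^c, \ldots, U_n^c)$ lies in $\mathcal{Q}$ because any such $\{C_i^c\}$ refines $\mathcal{U}$ and so inherits the positive entropy. Ordering $\mathcal{Q}$ by componentwise reverse inclusion, compactness of each $U_i^c$ together with openness of $C_i^\circ$ ensures that the intersection along any descending chain remains in $\mathcal{Q}$, so Zorn's lemma produces a minimal element $(A_1^*, \ldots, A_n^*)$.

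The core step is the minimality-implies-singleton argument. Suppose some $A_i^*$ contains two distinct points $a, b$; pick disjoint open $O_a \ni a$, $O_b \ni b$ and split $A_i^* = B \cup C$ with $B = A_i^* \setminus O_b$, $C = A_i^* \setminus O_a$, both proper closed subsets of $A_i^*$. Minimality furnishes closed witnesses $\{C_j^B\}$, $\{C_j^C\}$ (with $A_j^* \subseteq (C_j^B)^\circ \cap (C_j^C)^\circ$ for $j \ne i$, $B \subseteq (C_i^B)^\circ$, $C \subseteq (C_i^C)^\circ$) such that $\{(C_j^B)^c\}$ and $\{(C_j^C)^c\}$ are open covers with entropy $\le 0$. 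Set $D_j = C_j^B \cap C_j^C$ for $j \ne i$ and $D_i = C_i^B \cup C_i^C$; a routine check gives $A_j^* \subseteq D_j^\circ$ for all $j$ and $\{D_j^c\}$ is an open cover, so the $\mathcal{Q}$-property forces $h(G, \{D_j^c\}) > 0$. On the other hand, each element $(C_a^B)^c \cap (C_b^C)^c$ of the join $\{(C_j^B)^c\} \vee \{(C_j^C)^c\}$ is contained in some $D_j^c$ (in $D_a^c$ if $a \ne i$, in $D_b^c$ if $b \ne i$, in $D_i^c$ if $a = b = i$), so the join refines $\{D_j^c\}$; subadditivity then gives $h(G, \{D_j^c\}) \le 0$, a contradiction. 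Thus $A_i^* = \{x_i\}$ for each $i$, and $(x_1, \ldots, x_n)$ is the desired entropy $n$-tuple: non-diagonality follows from $\bigcap U_i = \emptyset$ ruling out $x_1 = \cdots = x_n$, and the entropy condition is the $\mathcal{Q}$-property specialised to singletons.

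For part~(2), I reduce to part~(1). Given $\mathcal{V} = \{V_1, \ldots, V_n\}$ with $h_\mu(G, \mathcal{V}) > 0$, choose $\mathcal{U}' = \{U'_1, \ldots, U'_m\} \in \mathcal{C}_X^o$ with $\mathcal{V} \succeq \mathcal{U}'$ and $h_\mu(G, \mathcal{U}') > 0$, pick $\pi: \{1, \ldots, n\} \to \{1, \ldots, m\}$ with $V_i \subseteq U'_{\pi(i)}$, and set $U_i := U'_{\pi(i)}$. The collection $\mathcal{U} = \{U_1, \ldots, U_n\}$ (with possible repetitions) is an open cover satisfying $\mathcal{U} \succeq \mathcal{U}'$, so $h_\mu(G, \mathcal{U}) \ge h_\mu(G, \mathcal{U}') > 0$. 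Running the Zorn argument above for $h_\mu$ produces a $\mu$-entropy $n$-tuple $(x_1, \ldots, x_n)$ with $x_i \in U_i^c \subseteq V_i^c$. The main obstacle is the splitting step — specifically verifying that $\{D_j^c\}$ remains an open cover and that $\{(C_j^B)^c\} \vee \{(C_j^C)^c\}$ refines $\{D_j^c\}$ — together with establishing sofic subadditivity in the infimum form used above; once these are in place the remainder is routine.
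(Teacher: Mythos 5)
Your argument is correct and is exactly the Blanchard--BHMMR scheme that the paper invokes without writing out (``along the lines of \cite{B2, BHMMR}''): the submultiplicativity $N((\mathcal{V}_1\vee\mathcal{V}_2)^{d}, Z)\le N(\mathcal{V}_1^{d}, Z)\, N(\mathcal{V}_2^{d}, Z)$ applied to $Z= X^{d_i}_{F, \delta, \sigma_i}$ (resp. $Z= X^{d_i}_{F, \delta, \sigma_i, \mu, L}$), combined with the monotonicity of $h_{F, \delta}$ and $h_{F, \delta, \mu, L}$ in $(F, \delta, L)$, does give the subadditivity under joins in the infimum form you need, and the Zorn/splitting step and the reduction of (2) to an open cover both go through as written. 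The only slip is cosmetic: non-diagonality comes from $\bigcap_i U_i^c= \emptyset$ (the $U_i$ cover $X$ and each $x_i\in U_i^c$), not from $\bigcap_i U_i= \emptyset$.
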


\begin{prop} \label{1107092128}
Let $\mu\in \mathcal{M} (X, G)$ and $n\in \N\setminus \{1\}$. Then
\begin{enumerate}

\item $E_n^\mu (X, G)\subseteq E_n (X, G)$.

\item Both $E_n (X, G)\cup \Delta_n (X)$ and $E_n^\mu (X, G)\cup \Delta_n (X)$ are closed subsets of $X^n$.

\item $h (G, X)> 0$ if and only if $E_n (X, G)\neq \emptyset$; $h_\mu (G, X)> 0$ if and only if $E_n^\mu (X, G)\neq \emptyset$.
\end{enumerate}
\end{prop}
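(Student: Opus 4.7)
Part (1) is essentially a tautology: from the definitions in Section \ref{preli}, $h_\mu(G, \mathcal{U})\le h(G, \mathcal{U})$ for every $\mathcal{U}\in \mathcal{C}_X^o$, so any cover $\{U_1^c, \ldots, U_n^c\}$ witnessing $(x_1, \ldots, x_n)\in E_n^\mu(X, G)$ via $h_\mu(G, \{U_1^c, \ldots, U_n^c\})> 0$ also witnesses the same point in $E_n(X, G)$.

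For part (2), I would show the complement of $E_n(X, G)\cup \Delta_n(X)$ is open (the argument for $E_n^\mu(X,G)\cup \Delta_n(X)$ is identical with $h_\mu$ in place of $h$). Pick $(x_1, \ldots, x_n)$ in this complement. Since it lies off the diagonal, choose $i\neq j$ with $x_i\neq x_j$; since it fails the entropy-tuple condition, pick closed neighborhoods $U_k$ of $x_k$ with $\bigcap_k U_k= \emptyset$ and $h(G, \{U_1^c, \ldots, U_n^c\})\le 0$. Choose open $V_k\subseteq \mathrm{int}(U_k)$ containing $x_k$, with $V_i\cap V_j= \emptyset$ (possible by Hausdorffness of $X$). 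For any $(y_1, \ldots, y_n)\in V_1\times \cdots \times V_n$ the coordinates $y_i, y_j$ still differ, and each $U_k$ is still a closed neighborhood of $y_k$, so the same cover with the same non-positive entropy estimate exhibits the point as lying in the complement.

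For part (3), the direction $E_n(X, G)\neq \emptyset\Rightarrow h(G, X)> 0$ is direct: for $(x_1, \ldots, x_n)\in E_n$, pick $x_i\neq x_j$, take disjoint closed neighborhoods $A, B$ of $x_i, x_j$ together with any closed neighborhoods of the remaining coordinates; the resulting admissible cover has positive entropy by hypothesis, forcing $h(G, X)> 0$. The forward implication is the core of the argument, which I would handle in three steps: (a) use $h(G, X)> 0$ to pick $\mathcal{U}= \{U_1, \ldots, U_m\}\in \mathcal{C}_X^o$ with $h(G, \mathcal{U})> 0$, and apply Proposition \ref{1107092127}(1) to extract an entropy $m$-tuple $(x_1, \ldots, x_m)$; (b) reduce to an entropy 2-tuple by picking $i\neq j$ with $x_i\neq x_j$ and showing $(x_i, x_j)\in E_2(X, G)$---for any disjoint closed neighborhoods $A, B$ of $x_i, x_j$, set $W_i= A$, $W_j= B$, and $W_k= X$ for all other $k$, and observe that $\{W_1^c, \ldots, W_m^c\}$ is admissible in the $m$-tuple definition and has the same covering numbers (hence the same entropy) as $\{A^c, B^c\}$, since the empty entries $W_k^c$ contribute nothing to covering; (c) pad to length $n$ via $(x_i, x_j, \ldots, x_j)$ by noting that given closed neighborhoods $U_1, U_2, \ldots, U_n$ of these coordinates with $\bigcap_k U_k= \emptyset$, the closed neighborhood $V:= U_2\cap \cdots \cap U_n$ of $x_j$ is disjoint from $U_1$ and $\{U_1^c, \ldots, U_n^c\}\succeq \{U_1^c, V^c\}$, giving positive entropy via the 2-tuple property and the monotonicity of $h(G, \cdot)$ under refinement. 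The measure-theoretic analogue proceeds identically with Proposition \ref{1107092127}(2) in (a) and $h_\mu$ throughout. The main delicate points---essentially organizational rather than substantive---are justifying $W_k= X$ as a legitimate closed neighborhood in (b) and invoking entropy's monotonicity under cover refinement in (c), both of which follow directly from the cover-number definitions in Section \ref{preli}.
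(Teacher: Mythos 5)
Your proof is correct and is exactly the standard argument that the paper itself omits (it merely cites the classical references \cite{B2, BHMMR}, where these are the arguments used): part (1) from $h_\mu(G,\mathcal{U})\le h(G,\mathcal{U})$, part (2) by shrinking to open neighborhoods inside the interiors of a witnessing family of closed sets, and part (3) by extracting a tuple via Proposition \ref{1107092127}, passing to a $2$-tuple by padding the cover with $X$, and then padding back to length $n$ using monotonicity of $h(G,\cdot)$ under refinement. The two points you flag as delicate are indeed the only ones needing care, and both are handled correctly.
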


Moreover, with the help of Theorem \ref{1107091735}, we obtain:

\begin{thm} \label{1107092143}
Let $n\in \N\setminus \{1\}$. Then
$$E_n (X, G)= \overline{\bigcup_{\mu\in \mathcal{M} (X, G)} E_n^\mu (X, G)}\setminus \Delta_n (X).$$
\end{thm}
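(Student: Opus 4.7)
The plan is to establish the equality as a pair of inclusions: the ``$\supseteq$'' direction is essentially bookkeeping with the material already set up in Section 5, while the ``$\subseteq$'' direction is where the local variational principle Theorem \ref{1107091735} is applied together with Proposition \ref{1107092127}.

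For the inclusion $\overline{\bigcup_{\mu\in \mathcal{M}(X, G)} E_n^\mu(X, G)}\setminus \Delta_n(X)\subseteq E_n(X, G)$ I would simply observe that Proposition \ref{1107092128}(1) gives $\bigcup_\mu E_n^\mu(X, G)\subseteq E_n(X, G)$, and Proposition \ref{1107092128}(2) tells us $E_n(X, G)\cup \Delta_n(X)$ is closed in $X^n$; hence the closure of the union lies in $E_n(X, G)\cup \Delta_n(X)$, and removing $\Delta_n(X)$ yields the claim.

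For the reverse inclusion I would fix $(x_1, \ldots, x_n)\in E_n(X, G)$ (which is automatically outside $\Delta_n(X)$) together with an arbitrary product neighborhood $W_1\times \cdots\times W_n$ of it, with the aim of exhibiting some $\mu\in \mathcal{M}(X, G)$ and a $\mu$-entropy tuple sitting inside $W_1\times \cdots\times W_n$. Since the coordinates $x_k$ are not all equal, I can select closed neighborhoods $U_k\subseteq W_k$ of $x_k$ so that $\bigcap_{k= 1}^n U_k= \emptyset$, by making $U_i, U_j$ disjoint at some pair of distinct coordinates and choosing $U_k\subseteq W_k$ arbitrarily small at the remaining indices. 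Then $\mathcal{U}= \{U_1^c, \ldots, U_n^c\}\in \mathcal{C}_X^o$, and the definition of entropy tuple gives $h(G, \mathcal{U})> 0$. Applying Theorem \ref{1107091735} produces $\mu\in \mathcal{M}(X, G)$ with $h_\mu(G, \mathcal{U})> 0$, and Proposition \ref{1107092127}(2) applied to $\mathcal{V}= \mathcal{U}$ (so that $V_k^c= U_k$) yields a $\mu$-entropy tuple $(y_1, \ldots, y_n)\in E_n^\mu(X, G)$ with $y_k\in U_k\subseteq W_k$ for every $k$. As the neighborhoods $W_k$ were arbitrary, $(x_1, \ldots, x_n)$ lies in $\overline{\bigcup_\mu E_n^\mu(X, G)}$, completing the inclusion.

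The one point requiring minor care is ensuring that $\{U_1^c, \ldots, U_n^c\}$ really is an open cover of $X$, equivalently $\bigcap_k U_k= \emptyset$, so that the hypothesis ``$\mathcal{U}\in \mathcal{C}_X^o$'' in the definition of entropy tuple is triggered; this is precisely where $(x_1, \ldots, x_n)\notin \Delta_n(X)$ is used. Beyond this, the argument is a clean reduction to the local variational principle, with no further analytic input required.
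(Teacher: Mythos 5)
Your proposal is correct and follows essentially the same route as the paper: the ``$\supseteq$'' inclusion from Proposition \ref{1107092128}, and the ``$\subseteq$'' inclusion by shrinking closed neighborhoods of a non-diagonal tuple until their complements form a finite open cover, then invoking Theorem \ref{1107091735} and Proposition \ref{1107092127}(2). The paper merely phrases the approximation with neighborhoods of diameter at most $\frac{1}{m}$ rather than arbitrary product neighborhoods, which is a cosmetic difference.
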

\begin{proof}
By Proposition \ref{1107092128}, we have readily
$$E_n (X, G)\supseteq \overline{\bigcup_{\mu\in \mathcal{M} (X, G)} E_n^\mu (X, G)}\setminus \Delta_n (X).$$
Now we are to obtain the conclusion by proving
\begin{equation} \label{five-star}
\overline{\bigcup_{\mu\in \mathcal{M} (X, G)} E_n^\mu (X, G)}\setminus \Delta_n (X)\supseteq E_n (X, G).
\end{equation}

Let $(x_1, \cdots, x_n)\in E_n (X, G)$. Once $m\in \N$ is large enough, we may find a closed neighborhood $U_{i, m}$ of $x_i$ with diameter at most $\frac{1}{m}$ for each $i= 1, \cdots, n$ such that $\mathcal{U}_m\doteq \{U_{1, m}^c, \cdots, U_{n, m}^c\}\in \mathcal{C}_X^o$, which implies $h (G, \mathcal{U}_m)> 0$, and so by Theorem \ref{1107091735} there exists $\mu_m\in \mathcal{M} (X, G)$ with $h_{\mu_m} (G, \mathcal{U}_m)> 0$, hence
$$(U_{1, m}\times \cdots\times U_{n, m})\cap E_n^{\mu_m} (X, G)\neq \emptyset\ \text{(using Proposition \ref{1107092127})}.$$
 It is easy to obtain \eqref{five-star} from the above discussions, which finishes the proof.
\end{proof}

\section{Comparing them to the usual ones for amenable group actions}

In this section we are to compare those introduced sofic entropy for a finite cover with their classical counterparts in the setting of the group being amenable.
Thus, throughout this section, additionally we assume that the countable discrete sofic group $G$ is amenable. We prove that if the group is infinite and amenable then they coincide with the classical ones.
Whereas, different from the global case \cite{KLAJM}, when the acting group is finite, the problem if these two kinds of local measure-theoretical invariants are equivalent remains open.

Let $\mathcal{U}\in \mathcal{C}_X^o$. Recall that $\mathcal{U}_F, F\in \mathcal{F}_G$ is introduced in section \ref{1109111126} as $\bigvee\limits_{g\in F} g^{- 1} \mathcal{U}$. As guaranteed by the well-known Ornstein-Weiss lemma \cite[Theorem 6.1]{Lin-Wei}, the usual topological entropy of $\mathcal{U}$ for $(X, G)$ when considering an amenable group action, denoted by $h^a (G, \mathcal{U})$, is the limit of
$$(0\le)\ \frac{1}{|F|} \log N (\mathcal{U}_F, X)\ (\le \log N (\mathcal{U}, X)\le |\mathcal{U}|)$$
as $F\in \mathcal{F}_G$ becomes more and more left invariant in the sense that for each $\epsilon> 0$ there exist $K\in \mathcal{F}_G$ and $\delta> 0$ such that
$$|h^a (G, \mathcal{U})- \frac{1}{|F|} \log N (\mathcal{U}_F, X)|< \epsilon$$
once $F\in \mathcal{F}_G$ satisfies $|K F\Delta F|\le \delta |F|$.
Similarly, let $\mathcal{V}\in \mathcal{C}_X$ and $\mu\in \mathcal{M} (X, G)$. Denote by $h^a_\mu (G, \mathcal{V})$
the usual $\mu$-measure-theoretic entropy of $\mathcal{V}$ for $(X, G)$ when considering an amenable group action. That is, $h_\mu^a (G, \mathcal{V})$ is the limit of
$$(0\le)\ \frac{1}{|F|} H_\mu (\mathcal{V}_F)\ (\le \log |\mathcal{V}|)$$
as $F\in \mathcal{F}_G$ becomes more and more left invariant. See \cite{HYZ, MO, OW, WZ} for details.

In this section, we are to prove the following results.

\begin{thm} \label{1107101454}
Let $\mathcal{U}\in \mathcal{C}_X^o$. Then $h (G, \mathcal{U})= h^a (G, \mathcal{U})$.
\end{thm}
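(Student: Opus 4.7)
The plan is to establish the two inequalities $h (G, \mathcal{U}) \le h^a (G, \mathcal{U})$ and $h (G, \mathcal{U}) \ge h^a (G, \mathcal{U})$ separately, following the pattern of the global analogue in \cite[Section 5]{KLAJM} localized to the fixed cover $\mathcal{U}$. The engine of both inequalities is the sofic Rokhlin lemma of Kerr and Li \cite[Section 4]{KLAJM} (see also \cite[Section 4]{Elek}): for each $\epsilon > 0$ and any finite collection of F\o lner sets $F_1, \ldots, F_m$ chosen sufficiently invariant and well-dispersed, once $i$ is large enough there exist pairwise disjoint $C_{i, 1}, \ldots, C_{i, m} \subseteq \{1, \ldots, d_i\}$ such that all the sets $\sigma_i (F_j) c$ (for $c \in C_{i, j}$ and $1 \le j \le m$) are pairwise disjoint, the map $g \mapsto \sigma_{i, g} (c)$ is a bijection from $F_j$ onto $\sigma_i (F_j) c$, and the remainder $R_i = \{1, \ldots, d_i\} \setminus \bigsqcup_j \bigsqcup_{c \in C_{i, j}} \sigma_i (F_j) c$ satisfies $|R_i| < \epsilon d_i$.

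For the upper bound, fix $\epsilon > 0$, choose F\o lner sets $F_1, \ldots, F_m$ with $\frac{1}{|F_j|} \log N (\mathcal{U}_{F_j}, X) < h^a (G, \mathcal{U}) + \epsilon$ using the amenable convergence, take $F \in \mathcal{F}_G$ containing $\bigcup_j F_j$, and pick $\delta > 0$ much smaller than both $\sqrt{\epsilon}$ and a Lebesgue number for $\mathcal{U}$. Given $(x_1, \ldots, x_{d_i}) \in X^{d_i}_{F, \delta, \sigma_i}$, Chebyshev's inequality applied to the $\ell^2$-condition defining $X^{d_i}_{F, \delta, \sigma_i}$ shows that on all but an $O (\epsilon)$-fraction of centres $c$, the pointwise distance $\rho (g x_c, x_{\sigma_{i, g} (c)})$ stays below the Lebesgue number for every $g \in F$. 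Invoking the Rokhlin decomposition and discarding both $R_i$ and the bad centres, on every remaining piece $\sigma_i (F_j) c$ the tuple $(x_{\sigma_{i, g} (c)})_{g \in F_j}$ lies in the same atom of $\mathcal{U}_{F_j}$ as $(g x_c)_{g \in F_j}$, so a single atom from a minimum subfamily of $\mathcal{U}_{F_j}$ covering $X$ suffices for the piece. Summing over pieces produces
\begin{equation*}
\log N (\mathcal{U}^{d_i}, X^{d_i}_{F, \delta, \sigma_i}) \le \sum_{j = 1}^m |C_{i, j}| \log N (\mathcal{U}_{F_j}, X) + O (\epsilon d_i \log |\mathcal{U}|).
\end{equation*}
Since $\sum_j |C_{i, j}| |F_j| \le d_i$, dividing by $d_i$, taking $\limsup_{i \to \infty}$, and then letting $\epsilon \to 0$ delivers $h (G, \mathcal{U}) \le h^a (G, \mathcal{U})$.

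For the lower bound, fix $F \in \mathcal{F}_G$ with $\frac{1}{|F|} \log N (\mathcal{U}_F, X)$ close to $h^a (G, \mathcal{U})$, a minimum subfamily $\mathcal{W} \subseteq \mathcal{U}_F$ covering $X$, and a representative $y_W \in W \setminus \bigcup_{W' \neq W} W'$ for each $W \in \mathcal{W}$ (possible by the minimality of $\mathcal{W}$). Apply the sofic Rokhlin lemma to the single set $F$ to obtain centres $C_i \subseteq \{1, \ldots, d_i\}$ with the pieces $\sigma_i (F) c$ pairwise disjoint and $|C_i| \cdot |F| \ge (1 - \epsilon) d_i$. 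Given prescribed $F' \in \mathcal{F}_G$ and $\delta > 0$, for each $\phi : C_i \to \mathcal{W}$ define $x^\phi \in X^{d_i}$ by $x^\phi_{\sigma_{i, g} (c)} = g y_{\phi (c)}$ on the Rokhlin pieces, extending arbitrarily to $R_i$. Once $F$ contains $F' F$ and $\sigma_i$ is approximately multiplicative and free on $F' F$, the identity $x^\phi_{\sigma_{i, sg} (c)} = sg y_{\phi (c)} = s x^\phi_{\sigma_{i, g} (c)}$ holds at all but $o(d_i)$ indices, so $x^\phi \in X^{d_i}_{F', \delta, \sigma_i}$. A counting argument parallel to \cite[Section 5]{KLAJM}, using that distinct elements of $\mathcal{W}$ correspond to distinct sequences in $\mathcal{U}^F$ under the identification $W = \bigcap_{g \in F} g^{-1} U^W_g$ and that our choice of $y_W$ distinguishes the atoms, produces
\begin{equation*}
N (\mathcal{U}^{d_i}, X^{d_i}_{F', \delta, \sigma_i}) \ge N (\mathcal{U}_F, X)^{(1 - \epsilon) |C_i|}
\end{equation*}
up to a subexponential factor. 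Dividing by $d_i$, taking $\limsup_i$, and varying $F', \delta, F, \epsilon$ gives $h (G, \mathcal{U}) \ge h^a (G, \mathcal{U})$.

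The main technical obstacle lies in the upper bound, where one must convert the $\ell^2$-closeness in the definition of $X^d_{F, \delta, \sigma}$ into pointwise closeness below a Lebesgue number of $\mathcal{U}$ on every piece $\sigma_i (F_j) c$, interleaving the Chebyshev-type deletion of bad indices with the Rokhlin bookkeeping so that the discarded fraction is absorbed into the $\epsilon$-error while keeping the per-piece count bounded by $N (\mathcal{U}_{F_j}, X)$ rather than the much larger $|\mathcal{U}|^{|F_j|}$. In the lower bound the corresponding care is required to verify that the explicit model tuples $x^\phi$ satisfy the imposed $\ell^2$ closeness---demanding that the Rokhlin F\o lner set $F$ dominate $F' F$ and that $\sigma_i$ be sofic uniformly on $F' F$---and to make the cover-counting precise enough that distinct functions $\phi$ indeed force distinct members of $\mathcal{U}^{d_i}$ in any cover of the model set.
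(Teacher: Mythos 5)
Your overall strategy coincides with the paper's: both inequalities are run through the Kerr--Li sofic Rokhlin quasi-tiling, with a Chebyshev step converting the $\ell^2$ condition of $X^{d}_{F,\delta,\sigma}$ into pointwise closeness at most indices for the upper bound, and explicit model points built on the tiles for the lower bound. The upper bound is essentially the paper's Lemma \ref{1107120120}; two points need to be made precise. First, the ``Lebesgue number'' step should really be the statement that the $\delta$-neighbourhood of the orbit-tuple set $\{(sx)_{s\in F_j}: x\in X\}$ inside $X^{F_j}$ is still covered by $N(\mathcal{U}_{F_j},X)$ product sets from $\mathcal{U}^{F_j}$ (a compactness argument using uniform continuity of the maps $s\in F_j$; this is Lemma \ref{1107122331}). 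Second, since the set of bad indices depends on the tuple $(x_1,\ldots,x_{d_i})$, one must first enumerate over the possible sets of good indices (equivalently, of discarded centres) and only then apply the Rokhlin lemma to each such set; this enumeration costs a subexponential factor such as $(1+\epsilon)^{d_i}$, which is harmless but cannot be skipped. You gesture at this interleaving but do not carry it out.

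The lower bound contains two genuine gaps. First, you tile $\{1,\ldots,d_i\}$ up to an $\epsilon$-fraction by pairwise disjoint exact copies of a \emph{single} F\o lner set $F$. For a general countable amenable group this is not available: the Ornstein--Weiss quasi-tiling and its sofic version (Lemma \ref{1107112242}) produce tilings by finitely many shapes $F_1,\ldots,F_l$, and a single shape need not $(1-\epsilon)$-tile. The repair is cheap --- take all the $F_k$ sufficiently invariant so that each satisfies $\frac{1}{|F_k|}\log N(\mathcal{U}_{F_k},X)\ge h^a(G,\mathcal{U})-\theta$ --- but the claim as stated is false. Second, the counting via representatives $y_W\in W\setminus\bigcup_{W'\in\mathcal{W},\,W'\neq W}W'$ does not yield the asserted bound: minimality of $\mathcal{W}$ guarantees such points exist, but it does not prevent a single element of $\mathcal{U}_F$ (possibly one not belonging to $\mathcal{W}$) from containing two distinct representatives, so one cannot conclude that covering $\{y_W: W\in\mathcal{W}\}$ by elements of $\mathcal{U}_F$ requires $N(\mathcal{U}_F,X)$ sets, hence nor that distinct $\phi$ force distinct members of $\mathcal{U}^{d_i}$ in a cover of the model set. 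The paper avoids choosing representatives altogether: it lets the tuples $y_k$ range over all of $X^{C_k}$, observes that any element of $\mathcal{U}^{d}$ containing the model point pulls back into a determined element of $\prod_k(\mathcal{U}_{F_k})^{C_k}$, and then compares with $N\bigl(\prod_k(\mathcal{U}_{F_k})^{C_k},\prod_k X^{C_k}\bigr)$ as in \eqref{1107120112}. You should restructure your counting along those lines.
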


\begin{thm} \label{1107101633}
Let $\mathcal{V}\in \mathcal{C}_X$ and $\mu\in \mathcal{M} (X, G)$. Assume that $G$ is infinite. Then $h_\mu (G, \mathcal{V})= h_\mu^a (G, \mathcal{V})$.
\end{thm}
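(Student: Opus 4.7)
The plan is to establish the two inequalities $h_\mu(G,\mathcal{V})\le h_\mu^a(G,\mathcal{V})$ and $h_\mu(G,\mathcal{V})\ge h_\mu^a(G,\mathcal{V})$ separately, paralleling the strategy of \cite[Section~6]{KLAJM} for the global variational identity. Both directions rely on the sofic approximation version of the Ornstein--Weiss Rokhlin lemma established in \cite[Section~4]{KLAJM}, together with the ergodic decomposition of local amenable entropy (Lemma~3.12 of \cite{HYZ}) and an analogous decomposition for the sofic invariant (the referenced Lemmas~\ref{1107131641} and~\ref{1107131627}, whose proofs exploit $d_i\to\infty$). These decompositions reduce both inequalities to the case $\mu\in\mathcal{M}^e(X,G)$, and this is where the infiniteness of $G$ is essential.

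For the upper bound, fix $\epsilon>0$, an open cover $\mathcal{U}\in\mathcal{C}_X^o$ with $\mathcal{V}\succeq\mathcal{U}$, a partition $\alpha\in P(\mathcal{U})$, and a F\o lner set $F_0\in\mathcal{F}_G$ so invariant that $\tfrac{1}{|F_0|}H_\mu(\alpha_{F_0})<h_\mu^a(G,\alpha)+\epsilon$. For $d_i$ large the sofic Rokhlin lemma supplies an approximate tiling of $\{1,\dots,d_i\}$ by translates of $F_0$ under $\sigma_i$. Any model tuple $(x_1,\dots,x_{d_i})\in X^{d_i}_{F',\delta,\sigma_i,\mu,L}$, with $F'\supseteq F_0$ sufficiently large, $\delta$ sufficiently small, and $L$ sufficiently rich, induces on each tile an $\alpha_{F_0}$-name, and the measure-matching condition forces the empirical distribution of these tile-names to be close to the $\mu$-distribution on $\alpha_{F_0}$. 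A Stirling-type count in the spirit of Lemma~\ref{1107182209} applied with partition $\alpha_{F_0}$ then bounds the number of resulting $\alpha^{d_i}$-profiles by $e^{d_i(h_\mu^a(G,\alpha)+2\epsilon)}$, and each such profile determines a covering element of $\mathcal{U}^{d_i}$ since $\alpha\succeq\mathcal{U}$. Optimizing $\alpha$, $\mathcal{U}$, and $F_0$ and letting $\epsilon\to 0$ yields $h_\mu(G,\mathcal{V})\le h_\mu^a(G,\mathcal{V})$.

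For the lower bound, pick a $\mu$-generic point $x\in X$ via Lindenstrauss's pointwise ergodic theorem for amenable actions, fix $\mathcal{U}\in\mathcal{C}_X^o$ with $\mathcal{V}\succeq\mathcal{U}$, a partition $\alpha\succeq\mathcal{U}$, and a F\o lner $F_0$ so invariant that $\tfrac{1}{|F_0|}H_\mu(\mathcal{U}_{F_0})$ is within $\epsilon$ of $h_\mu^a(G,\mathcal{U})$. For $d_i$ large use the sofic Rokhlin lemma to tile $\{1,\dots,d_i\}$ by translates of $F_0$ under $\sigma_i$, and on each tile centered at $c$ implant the translated copies $(gx)_{g\in F_0}$ at the positions $(\sigma_{i,g}(c))_{g\in F_0}$. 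Varying the implanted $F_0$-word tile-by-tile among those realizing distinct typical $\mathcal{U}_{F_0}$-membership classes --- of which there are $\ge e^{|F_0|(h_\mu^a(G,\mathcal{U})-\epsilon)}$ by Shannon--McMillan--Breiman for amenable groups \cite{WZ} applied at the $\alpha$-level and grouped by $\mathcal{U}_{F_0}$-class --- produces $\ge e^{d_i(h_\mu^a(G,\mathcal{U})-2\epsilon)}$ model tuples in $X^{d_i}_{F',\delta,\sigma_i,\mu,L}$ requiring distinct $\mathcal{U}^{d_i}$-cover elements. Asymptotic freeness and multiplicativity of $\sigma_i$ give the $F'$-condition on each tile, and genericity of $x$ supplies the measure-matching condition. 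Hence $h_\mu(G,\mathcal{U})\ge h_\mu^a(G,\mathcal{U})-2\epsilon$, and supping over $\mathcal{U}$ with $\mathcal{V}\succeq\mathcal{U}$ gives $h_\mu(G,\mathcal{V})\ge h_\mu^a(G,\mathcal{V})$.

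The main obstacle will be synchronizing the many parameters: the tile size $|F_0|$ must be large enough for the amenable entropy approximation, the sofic approximation $\sigma_i$ must be sufficiently free and multiplicative relative to $F_0\cup F'$, the approximation parameter $\delta$ must be smaller than both the modulus of continuity of the functions in $L$ and the scale at which atoms of $\alpha$ are separated, and the combinatorial count must absorb the boundary indices not covered by any tile. The ergodic decomposition of sofic entropy itself is delicate, since $h_\mu(G,\mathcal{V})$ is only upper semicontinuous in $\mu$. The infiniteness of $G$ is essential throughout, both in the decomposition step and in allowing $|F_0|\to\infty$ to drive the Shannon--McMillan--Breiman and Stirling error terms to zero; for finite $G$ the F\o lner sequence stabilizes at $G$ eventually and the tiling/averaging mechanism degenerates, which is precisely the obstruction highlighted in the introduction.
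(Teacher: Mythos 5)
Your overall strategy (sofic Rokhlin tiling, implanting orbit words for the lower bound, counting names for the upper bound) is the right one and matches the paper's, but two of your reductions are not actually available. First, there is no ergodic decomposition for the sofic local invariant: a formula $h_\mu (G, \mathcal{U})= \int_X h_{\mu_x} (G, \mathcal{U})\, d \mu (x)$ is not known (sofic entropy is only upper semicontinuous in $\mu$ and may be $- \infty$ on ergodic components), so you cannot ``reduce both inequalities to the case $\mu\in \mathcal{M}^e (X, G)$.'' The decomposition is used only on the amenable side, via $h_\mu^a (G, \mathcal{U})= \int_X h_{\mu_x}^a (G, \mathcal{U})\, d \mu (x)$, and the two key lemmas must be proved directly for a general invariant $\mu$ by partitioning $X$ into $G$-invariant pieces on which $x\mapsto h^a_{\mu_x} (G, \mathcal{U})$ and $x\mapsto \mu_x (f)$ are nearly constant and tiling each piece separately. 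Second, your upper bound counts $\alpha_{F_0}$-names for a partition $\alpha\succeq \mathcal{U}$ and hence yields $h_\mu (G, \mathcal{U})\le \inf_{\alpha\succeq \mathcal{U}} h^a_\mu (G, \alpha)$; since $h^a_\mu (G, \alpha)\ge h^a_\mu (G, \mathcal{U})$ for every such $\alpha$, this does not give $h_\mu (G, \mathcal{U})\le h^a_\mu (G, \mathcal{U})$ unless you also invoke the nontrivial identity $\inf_{\alpha\succeq \mathcal{U}} h^a_\mu (G, \alpha)= h^a_\mu (G, \mathcal{U})$ from local entropy theory of amenable actions. The paper avoids this by counting with subfamilies of $\mathcal{U}_{F_k}$ covering sets of measure close to $1$ (the quantities $b_\nu (F, a, \mathcal{U})$ together with Lemma \ref{1107142100} and Lemma \ref{1107211626}) instead of with partitions. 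Relatedly, the single-coordinate empirical condition defining $X^d_{F, \delta, \sigma, \mu, L}$ does not by itself control the empirical distribution of $\alpha_{F_0}$-names on tiles; one would need $L$ to contain functions of the form $\prod_{s\in F_0} f_s\circ s$ and to pass through the approximate equivariance.

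The other genuine gap is the passage from open covers to a general Borel cover $\mathcal{V}\in \mathcal{C}_X$. By definition $h_\mu (G, \mathcal{V})= \sup \{h_\mu (G, \mathcal{U}): \mathcal{U}\in \mathcal{C}_X^o,\ \mathcal{V}\succeq \mathcal{U}\}$ is a supremum over \emph{coarser} open covers, whereas $h^a_\mu (G, \mathcal{V})$ is defined intrinsically from $H_\mu (\mathcal{V}_F)$ and is monotone increasing in fineness. Hence your final step ``supping over $\mathcal{U}$ with $\mathcal{V}\succeq \mathcal{U}$'' only yields $\sup_{\mathcal{U}} h^a_\mu (G, \mathcal{U})\le h^a_\mu (G, \mathcal{V})$, which is the wrong direction. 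To obtain $h^a_\mu (G, \mathcal{V})\le h_\mu (G, \mathcal{V})$ one must produce, for each $\epsilon> 0$, an open cover $\mathcal{U}$ coarser than $\mathcal{V}$ with $h^a_\mu (G, \mathcal{U})\ge h^a_\mu (G, \mathcal{V})- \epsilon$; the paper does this by inner regularity (shrinking each $V_m$ to a compact $K_m$ and enlarging by the small leftover set) combined with the continuity of $h^a_\mu$ under perturbations of the cover that are small in $\mu$-measure (Lemma \ref{1107230225}). This step is entirely absent from your argument.
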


Let $Y$ be a finite set, $\{A_i: i\in I\}\subseteq \{\emptyset\}\cup \mathcal{F}_Y$ and $\delta\ge 0$. $\{A_i: i\in I\}$ is said to \emph{$\delta$-cover} or be a \emph{$\delta$-covering} of $Y$ if
$|\bigcup\limits_{i\in I} A_i|\ge \delta |Y|$.
$\{A_i: i\in I\}$ are \emph{$\epsilon$-disjoint} if there exist pairwise disjoint subsets $B_i\subseteq A_i$ with $|B_i|\ge (1- \epsilon) |A_i|$ for each $i\in I$.

It holds the Rokhlin Lemma for sofic approximation sequences \cite[Lemma 4.5]{KLAJM}.

\begin{lem} \label{1107110016}
Let $\Gamma$ be a countable group and $0\le \tau< 1, 0< \eta< 1$. Then there are an $l\in \N$ and $\eta', \eta''> 0$ such that, whenever $e\in E_1\subseteq \cdots\subseteq E_l$ are finite subsets of $\Gamma$ with $|E_{k- 1}^{- 1} E_k\setminus E_k|\le \eta' |E_k|$ for $k= 2, \cdots, l$, there exists $e\in E\in \mathcal{F}_\Gamma$ such that for every good enough sofic approximation $\sigma: \Gamma\rightarrow Sym (d)$ for $\Gamma$ with some $d\in \N$ (i.e. $\sigma: \Gamma\rightarrow Sym (d)$ is a map with $B\subseteq \{1, \cdots, d\}$ satisfying $|B|\ge (1- \eta'') d$ and
$$\sigma_{s t} (a)= \sigma_s \sigma_t (a), \sigma_s (a)\neq \sigma_{s'} (a), \sigma_e (a)= a$$
for all $a\in B$ and $s, t, s'\in E$ with $s\neq s'$), and any set $V\subseteq \{1, \cdots, d\}$ with $|V|\ge (1- \tau) d$, there exist $C_1, \cdots, C_l\subseteq V$ such that
\begin{enumerate}

\item the sets $\sigma (E_k) C_k, k\in \{1, \cdots, l\}$ are pairwise disjoint;

\item $\{\sigma (E_k) C_k: k\in \{1, \cdots, l\}\}$ $(1- \tau- \eta)$-covers $\{1, \cdots, d\}$;

\item $\{\sigma (E_k) c: c\in C_k\}$ is $\eta$-disjoint for each $k\in \{1, \cdots, l\}$; and

\item for every $k\in \{1, \cdots, l\}$ and $c\in C_k$, $E_k\ni s\mapsto \sigma_s (c)$ is bijective.
\end{enumerate}
\end{lem}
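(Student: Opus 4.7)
The plan is to follow the standard greedy construction for sofic Rokhlin-type lemmas: select $C_l, C_{l-1}, \ldots, C_1$ by reverse induction, at each stage $k$ choosing a maximal $\eta$-disjoint family of translates $\{\sigma(E_k) c : c \in C_k\}$ fitting inside the portion of $V$ not yet covered by later stages. Before running the induction I would fix $l$ and $\eta'$ depending only on $\tau$ and $\eta$; take $E \in \mathcal{F}_\Gamma$ to contain $E_l \cup E_l^{-1} \cup \{e\}$; and choose $\eta'' > 0$ small enough that, for any sofic approximation $\sigma$ satisfying the stated multiplicativity and freeness conditions on $E$ outside a set of size at most $\eta'' d$, the remaining bad indices are negligible compared to $\eta d$ and $\eta' |E_k| d$.

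For the reverse induction, set $W_l = V \cap B$, where $B \subseteq \{1, \ldots, d\}$ is the good set on which $\sigma$ is multiplicative and free over $E$; in particular $s \mapsto \sigma_s(a)$ is then injective on every $E_k$ for $a \in B$. Choose $C_l \subseteq W_l$ to be maximal subject to $\{\sigma(E_l) c : c \in C_l\}$ being $\eta$-disjoint. Inductively, given $C_{k+1}, \ldots, C_l$, set $W_k = (V \cap B) \setminus \bigcup_{j > k} \sigma(E_j) C_j$ and pick $C_k \subseteq W_k$ maximal $\eta$-disjoint with respect to $E_k$-translates. Conditions (1), (3), and (4) of the lemma are then immediate from the construction together with the good behaviour on $B$.

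The critical step is establishing the $(1 - \tau - \eta)$-cover property (2). Maximality of $C_k$ forces every uncovered $a \in W_k \setminus \sigma(E_k) C_k$ to satisfy $|\sigma(E_k) a \cap \bigcup_{c \in C_k} \sigma(E_k) c| > \eta |E_k|$, so a double-counting of incidences shows that the density inside $V \cap B$ of the portion not yet covered by levels $k, k+1, \ldots, l$ drops by a definite multiplicative factor each time we move from level $k$ down to level $k-1$; here the near-invariance $|E_{k-1}^{-1} E_k \setminus E_k| \le \eta' |E_k|$, transported from the group to $\{1, \ldots, d\}$ via the asymptotic multiplicativity of $\sigma$ on $B$, is precisely what lets $E_{k-1}$-translates be packed into the portion of $V$ uncovered at level $k$ without colliding with the tiles already placed. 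Iterating $l$ times, with $l$ chosen so that the iterated reduction leaves at most an $\eta$ fraction of $V$ uncovered, and then combining this with $|\{1, \ldots, d\} \setminus V| \le \tau d$, yields the required bound.

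The main technical obstacle is the bookkeeping: the errors from asymptotic multiplicativity (controlled by $\eta''$), the errors from near-invariance (controlled by $\eta'$), the $\eta$-slack in disjointness, and the shortfall $\tau$ from $V$ must all be absorbed into the final estimate simultaneously. This is exactly why the statement provides two separate parameters $\eta'$ (the group-side F\o lner tolerance) and $\eta''$ (the sofic-approximation tolerance); once they are chosen in the correct order, namely first $l$, then $\eta'$ as a function of $l$ and $\eta$, and finally $\eta''$ after $E_1, \ldots, E_l$ and $E$ have been fixed, the induction closes and all four conclusions follow.
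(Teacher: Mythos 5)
This lemma is not proved in the paper at all: it is quoted verbatim from Kerr and Li (\cite[Lemma 4.5]{KLAJM}), so there is no in-paper argument to compare against and your sketch has to stand on its own. Its overall strategy --- reverse greedy selection, largest tile shape $E_l$ first, taking at each level a maximal $\eta$-disjoint family of translates and showing by a double count of incidences that the uncovered portion shrinks by a factor of roughly $(1-\eta)$ per level, with $l$ chosen so that $(1-\eta)^l\le\eta/2$ --- is exactly the Ornstein--Weiss quasi-tiling argument that Kerr and Li adapt to sofic approximations, so the approach is the right one.

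There is, however, one concrete defect. Taking $C_k$ maximal inside $W_k=(V\cap B)\setminus\bigcup_{j>k}\sigma(E_j)C_j$ only keeps the \emph{base points} of the level-$k$ tiles out of the higher-level tiles; it does nothing to prevent the full translate $\sigma(E_k)c$ from meeting some $\sigma(E_j)C_j$ with $j>k$, so conclusion (1) is not ``immediate from the construction'' as you assert. You must additionally delete from the candidate set all $c$ with $\sigma(E_k)c\cap\bigcup_{j>k}\sigma(E_j)C_j\neq\emptyset$; up to the multiplicativity errors on $B$ these lie in $\bigcup_{j>k}\sigma(E_k^{-1}E_j)C_j$, and since the $E_k$ are nested one has $E_k^{-1}E_j\subseteq E_{j-1}^{-1}E_j$, so the hypothesis $|E_{j-1}^{-1}E_j\setminus E_j|\le\eta'|E_j|$ bounds the loss \emph{beyond the already-covered region} by about $\eta'(1-\eta)^{-1}d$. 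You allude to this mechanism in your third paragraph, but it must be built into the definition of the candidate set and carried through the incidence count, since these deleted points do not satisfy the maximality inequality $|\sigma(E_k)a\cap\bigcup_{c\in C_k}\sigma(E_k)c|\ge\eta|E_k|$ on which the covering estimate rests. Two smaller repairs: the shrinking-density iteration should be run on $R_k=\{1,\cdots,d\}\setminus\bigcup_{j\ge k}\sigma(E_j)C_j$ rather than on $V\cap B$, because tiles spilling out of $V$ still count toward (2) and this is what turns the shortfall $\tau$ into a harmless additive term $\eta(\tau+\eta'')d$ per step rather than a multiplicative loss; and $\eta''$ must be fixed before $E_1,\cdots,E_l$ and $E$ (the statement quantifies it that way), which is unproblematic since the definition of a good sofic approximation already demands multiplicativity and freeness on all of $E$ outside a single set of size $\eta''d$, so $\eta''$ need only be small relative to $\eta$ and $l$.
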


Before proceeding, we also need the following easy observation.

\begin{lem} \label{1107122331}
Let $F\in \mathcal{F}_G$ and $\mathcal{U}\in \mathcal{C}_X^o$. Then there exists $\delta> 0$ such that
$$X_{F, \delta}= \left\{(x_s)_{s\in F}\in X^F: \max_{s\in F} \rho (x_s, s x)< \delta\ \text{for some}\ x\in X\right\}$$
can be covered by at most $N (\mathcal{U}_F, X)$ elements of $\mathcal{U}^F$.
\end{lem}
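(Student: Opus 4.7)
The plan is to apply the Lebesgue number lemma to the compact image of the orbit map $\phi_F\colon X\to X^F$, $x\mapsto(sx)_{s\in F}$, relative to an open cover of $X^F$ built from a minimum subcover of $\mathcal{U}_F$. First I would fix a minimum subcover $\mathcal{W}=\{W_1,\dots,W_N\}\subseteq\mathcal{U}_F$ of $X$, so that $N=N(\mathcal{U}_F,X)$. By the very definition of $\mathcal{U}_F=\bigvee_{s\in F}s^{-1}\mathcal{U}$, each atom $W_j$ admits a representation $W_j=\bigcap_{s\in F}s^{-1}U^{(j)}_s$ with $(U^{(j)}_s)_{s\in F}\in\mathcal{U}^F$; fix such a representation and put $V^{(j)}=\prod_{s\in F}U^{(j)}_s\in\mathcal{U}^F$. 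Directly from the definitions, $x\in W_j$ forces $sx\in U^{(j)}_s$ for each $s\in F$, so $\phi_F(W_j)\subseteq V^{(j)}$, and hence the compact set $K:=\phi_F(X)$ is covered by the open family $\{V^{(j)}:1\le j\le N\}$ in $X^F$.

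Next, I would endow $X^F$ with the product sup metric $\rho_F((x_s),(y_s))=\max_{s\in F}\rho(x_s,y_s)$; notice that $X_{F,\delta}$ is, by definition, the open $\delta$-neighbourhood of $K$ in $(X^F,\rho_F)$. I would then invoke the usual uniform form of the Lebesgue number lemma: since $K$ is compact and $\{V^{(j)}\}$ is an open cover of $K$ in the metric space $(X^F,\rho_F)$, there exists $\delta>0$ such that for every $k\in K$ the open $\rho_F$-ball of radius $\delta$ about $k$ lies entirely inside some $V^{(j)}$. This uniform form is an immediate consequence of the openness of each $V^{(j)}$ together with a finite subcover argument applied to $K$.

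With this $\delta$, if $(x_s)_{s\in F}\in X_{F,\delta}$ is witnessed by $x\in X$, then $\rho_F((x_s),\phi_F(x))<\delta$ and $\phi_F(x)\in K$, so $(x_s)\in V^{(j)}$ for some $j$. Thus $\{V^{(j)}:1\le j\le N\}$ covers $X_{F,\delta}$ by at most $N(\mathcal{U}_F,X)$ elements of $\mathcal{U}^F$, as required. I do not foresee a real obstacle here; the only subtle points are matching the product metric on $X^F$ with the formulation of $X_{F,\delta}$ in the statement, and using the uniform (ball-neighbourhood) version of the Lebesgue lemma rather than its diameter version, so that the conclusion reaches the whole $\delta$-neighbourhood of $K$ and not merely subsets of $K$.
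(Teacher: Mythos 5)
Your proposal is correct and follows essentially the same route as the paper: both construct the family $\{V^{(j)}\}\subseteq\mathcal{U}^F$ from a minimal subcover of $\mathcal{U}_F$, observe that the compact orbit set $\phi_F(X)$ lies in the open set $\bigcup_j V^{(j)}$, and extract $\delta$ by compactness. The only cosmetic difference is that you phrase the last step via the ball form of the Lebesgue number lemma, while the paper simply notes that a $\delta$-neighbourhood of a compact set contained in an open set stays inside that open set for small $\delta$.
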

\begin{proof}
Obviously, there exists $\mathcal{V}\subseteq \mathcal{U}^F$ such that $|\mathcal{V}|\le N (\mathcal{U}_F, X)$ and
\begin{equation*} \label{1107122343}
\cup \mathcal{V}\supseteq X_F\ \text{where}\ X_F= \{(s x)_{s\in F}: x\in X\}.
\end{equation*}
For example, let $\mathcal{W}\subseteq \mathcal{U}_F$ such that $|\mathcal{W}|= N (\mathcal{U}_F, X)$ and $\cup \mathcal{W}= X$. Now for each $W\in \mathcal{W}$, as $W\in \mathcal{U}_F$, say $W= \bigcap\limits_{s\in F} s^{- 1} U (s)$ with $U (s)\in \mathcal{U}$ for each $s\in F$, we set $\widehat{W}= \prod\limits_{s\in F} U (s)\in \mathcal{U}^F$. Then we can take $\mathcal{V}$ to be $\{\widehat{W}: W\in \mathcal{W}\}$.

Note that $\cup \mathcal{V}$ is an open subset of $X^F$ and $X_F\subseteq X^F$ is a non-empty closed subset, there exists $\delta> 0$ such that $X_{F, \delta}\subseteq \cup \mathcal{V}$. This finishes the proof.
\end{proof}

Then, following the ideas of \cite[Lemma 5.1]{KLAJM} we have:

\begin{lem} \label{1107120120}
Let $\mathcal{U}\in \mathcal{C}_X^o$. Then $h (G, \mathcal{U})\le h^a (G, \mathcal{U})$.
\end{lem}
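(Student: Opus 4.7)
The plan is to convert the Ornstein--Weiss style Rokhlin tower supplied by Lemma \ref{1107110016} into a covering bound of the form $N(\mathcal{U}^{d_i}, X^{d_i}_{F, \delta, \sigma_i}) \le e^{d_i(h^a(G,\mathcal{U}) + O(\epsilon))}$ for any prescribed $\epsilon > 0$ and for all sufficiently good sofic approximations in the fixed sequence. First I would exploit the Ornstein--Weiss convergence defining $h^a(G, \mathcal{U})$ to build a chain $e \in E_1\subseteq \cdots\subseteq E_l$ of increasingly left-invariant finite sets satisfying both the nested quasi-invariance $|E_{k-1}^{-1}E_k\setminus E_k|\le \eta'|E_k|$ required by Lemma \ref{1107110016} and the estimate $\frac{1}{|E_k|} \log N(\mathcal{U}_{E_k}, X)\le h^a(G, \mathcal{U}) + \epsilon$ for every $k$. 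Feeding this chain into Lemma \ref{1107110016} with a small $\tau$ and $\eta$ yields a finite $E\in \mathcal{F}_G$ and a tolerance $\eta''$ controlling the quality of the sofic approximation.

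Next I would apply Lemma \ref{1107122331} to each $E_k$ to obtain a single $\delta_0>0$ (the minimum of the radii produced) such that $X_{E_k, \delta_0}$ is covered by at most $N(\mathcal{U}_{E_k}, X)$ members of $\mathcal{U}^{E_k}$ for every $k$. Then choose $F\supseteq E\cup E_l$ and $\delta>0$ so small that, for every good sofic approximation $\sigma\colon G\rightarrow Sym(d)$ in the fixed sequence and every $(x_1,\dots,x_d)\in X^d_{F,\delta,\sigma}$, a Chebyshev estimate applied to the $\ell^2$-average defining the microstate condition forces
\[
\frac{1}{d}\,\bigl|\{a\in \{1,\dots,d\}: \max_{s\in E_l}\rho(s x_a, x_{\sigma_s(a)})\ge \delta_0\}\bigr|
\]
to be arbitrarily small. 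This transition from $\ell^2$-mean control along $F$ to pointwise control along almost all centers is the crucial technical step.

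With these parameters aligned, Lemma \ref{1107110016} supplies $C_1,\dots,C_l$ such that the blocks $\sigma(E_k)C_k$ are pairwise disjoint and $(1-\tau-\eta)$-cover $\{1,\dots,d\}$, each family $\{\sigma(E_k)c: c\in C_k\}$ is $\eta$-disjoint, and $E_k\ni s\mapsto \sigma_s(c)$ is bijective. For each $k$ and each $c\in C_k$ outside the small bad set above, the reindexed tuple $(x_{\sigma_s(c)})_{s\in E_k}$ lies in $X_{E_k,\delta_0}$ (with witness $x_c$), hence is contained in one of at most $N(\mathcal{U}_{E_k},X)$ product sets from $\mathcal{U}^{E_k}$. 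Covering the remaining coordinates---those in the uncovered part of size $\le (\tau+\eta)d$, in the $\eta$-overlaps of the towers, and in the bad centers---crudely by $|\mathcal{U}|$ per coordinate, I would obtain
\[
\log N(\mathcal{U}^d, X^d_{F,\delta,\sigma}) \le \sum_{k=1}^l |C_k|\log N(\mathcal{U}_{E_k},X) + \kappa\, d\log|\mathcal{U}|
\]
with $\kappa$ arbitrarily small. Since the disjointness of the $\sigma(E_k)C_k$ and the bijectivity of $s\mapsto \sigma_s(c)$ force $\sum_k |C_k|\,|E_k|\le d$, and since $\log N(\mathcal{U}_{E_k}, X)\le (h^a(G,\mathcal{U}) + \epsilon)|E_k|$, the main term is at most $(h^a(G,\mathcal{U}) + \epsilon)d$. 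This yields $h_{F,\delta}(G,\mathcal{U})\le h^a(G,\mathcal{U}) + \epsilon + \kappa\log|\mathcal{U}|$; taking infima in $F,\delta$ and letting $\epsilon,\kappa\to 0$ gives the claim.

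The main obstacle I foresee is the simultaneous juggling of four small parameters: the Følner quality governing the choice of $E_k$, the Rokhlin tolerances $\eta',\eta''$ produced by Lemma \ref{1107110016}, the $\ell^2$-radius $\delta$ entering the microstate space $X^d_{F,\delta,\sigma}$, and the uniform pointwise radius $\delta_0$ coming from Lemma \ref{1107122331}. Once the order of choices is correctly set---the $E_k$ first, then $\delta_0$, then $F$, and finally $\delta$ (and only then the sofic approximations $\sigma_i$ for $i$ large enough to satisfy Lemma \ref{1107110016})---the combinatorial counting is essentially routine, closely paralleling \cite[Lemma 5.1]{KLAJM}.
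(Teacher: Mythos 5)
Your strategy is the paper's own: Ornstein--Weiss tiles $E_1\subseteq\cdots\subseteq E_l$ with $\frac{1}{|E_k|}\log N(\mathcal{U}_{E_k},X)\le h^a(G,\mathcal{U})+\epsilon$, the sofic Rokhlin lemma (Lemma \ref{1107110016}), Lemma \ref{1107122331} to turn pointwise closeness along a tower into a covering by at most $N(\mathcal{U}_{E_k},X)$ elements of $\mathcal{U}^{E_k}$, and a Chebyshev bound converting the $\ell^2$ microstate condition into a small density of bad indices. One step, however, does not work as literally written. You fix the towers $C_1,\ldots,C_l$ once and then propose to cover each tower either efficiently (good center) or crudely (bad center); but which centers are bad depends on the microstate $(x_1,\ldots,x_d)$, so no single assignment of product sets to towers covers all of $X^d_{F,\delta,\sigma}$. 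If instead you allot every tower the union of its efficient and its crude cover, the count becomes $\prod_k\bigl(N(\mathcal{U}_{E_k},X)+|\mathcal{U}|^{|E_k|}\bigr)^{|C_k|}$, which is of order $|\mathcal{U}|^{d}$ and useless.

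The missing ingredient is a preliminary stratification of $X^d_{F,\delta,\sigma}$ according to the set of good indices. Since the bad set has density at most some small $\kappa$, the number of possible strata is at most $\sum_{j\le \kappa d}\binom{d}{j}$, which can be arranged to be $\le (1+\epsilon)^d$ and so contributes only $\log(1+\epsilon)$ to the exponential rate; within each stratum the good/bad dichotomy is fixed and your per-tower count becomes valid. This is precisely the paper's device (the family $\Theta$ and the bound \eqref{1107130014}); in fact the paper stratifies \emph{first} and then applies Lemma \ref{1107110016} with $V=\theta$, so that all centers are automatically good and no crude tower covers are needed at all. Two smaller points: the families $\{\sigma(E_k)c: c\in C_k\}$ are only $\eta$-disjoint, so bijectivity gives $\sum_k|C_k|\,|E_k|\le d/(1-\eta)$ rather than $\le d$, and the factor $1/(1-\eta)$ must be absorbed into your $\epsilon$ as in \eqref{1107130122}; the order of parameter choices you list at the end is correct.
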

\begin{proof}
Let $\epsilon> 0$. Then there exist $K\in \mathcal{F}_G$ and $\delta'> 0$ such that
$$\frac{1}{|F|} \log N (\mathcal{U}_F, X)\le h^a (G, \mathcal{U})+ \epsilon$$
once $F\in \mathcal{F}_G$ satisfies $|K F\Delta F|\le \delta' |F|$.

We choose $1> \eta> 0$ small enough such that
\begin{equation} \label{1107130122}
\frac{h^a (G, \mathcal{U})+ \epsilon}{1- \eta}+ 2 \eta \log |\mathcal{U}|\le h^a (G, \mathcal{U})+ 2 \epsilon.
\end{equation}
Now let $l\in \N$ and $\eta'> 0$ be as given by Lemma \ref{1107110016} with respect to $\tau= \eta$ and $\eta$. In $\mathcal{F}_G$ we take $e\in F_1\subseteq \cdots\subseteq F_l$ such that $|F_{k- 1}^{- 1} F_k\setminus F_k|\le \eta' |F_k|$ for $k= 2, \cdots, l$ and $|K F_k\Delta F_k|\le \delta' |F_k|$ for $k= 1, \cdots, l$. As the group $G$ is amenable, such subsets $F_1, \cdots, F_l$ must exist. Thus
\begin{equation} \label{1107122326}
\max_{k= 1}^l \frac{1}{|F_k|} \log N (\mathcal{U}_{F_k}, X)\le h^a (G, \mathcal{U})+ \epsilon.
\end{equation}
For each $k= 1, \cdots, l$ let $\delta_k> 0$ be as given by Lemma \ref{1107122331} with respect to $F_k$ and $\mathcal{U}$.
Take $\delta> 0$ such that $\delta\le \min\{\delta_1^2, \cdots, \delta_l^2, \frac{\eta}{|F_l|}\}$ and if $d\in \N$ is large enough then
\begin{equation} \label{1107130020}
\sum_{j= 0}^{[|F_l| \delta d]} \binom{d}{j}< (1+ \epsilon)^d.
\end{equation}

Now let $\sigma: G\rightarrow Sym (d)$ be a good enough sofic approximation for $G$ with some $d\in \N$. If $(x_1, \cdots, x_d)\in X^d_{F_l, \delta, \sigma}$ then
$$\max_{s\in F_l} \sqrt{\sum_{i= 1}^d \frac{1}{d} \rho^2 (s x_i, x_{\sigma_s (i)})}< \delta,$$
which implies that $|J (x_1, \cdots, x_d, F_l)|\ge (1- |F_l| \delta) d$, where
$$J (x_1, \cdots, x_d, F_l)= \left\{i\in \{1, \cdots, d\}: \max_{s\in F_l} \rho (s x_i, x_{\sigma_s (i)})< \sqrt{\delta}\right\}.$$
Now denote by $\Theta$ the set of all subsets of $\{1, \cdots, d\}$ with at least $(1- |F_l| \delta) d$ many elements and for each $\theta\in \Theta$ by $X^d_{F_l, \delta, \sigma, \theta}$ the set of all $(x_1, \cdots, x_d)\in X^d_{F_l, \delta, \sigma}$ with $J (x_1, \cdots, x_d, F_l)= \theta$. Then
\begin{equation} \label{1107130014}
|\Theta|= \sum_{j= 0}^{[|F_l| \delta d]} \binom{d}{j}< (1+ \epsilon)^d\ (\text{using \eqref{1107130020}}),
\end{equation}
as $\sigma$ is good enough and so $d\in \N$ is large enough.

Let $\theta\in \Theta$. As $\sigma$ is good enough, by Lemma \ref{1107110016} there exist $C_1, \cdots, C_l\subseteq \theta$ with
\begin{enumerate}

\item the sets $\sigma (F_k) C_k, k\in \{1, \cdots, l\}$ are pairwise disjoint;

\item $\{\sigma (F_k) c: c\in C_k\}$ is $\eta$-disjoint for each $k= 1, \cdots, l$;

\item $\{\sigma (F_k) C_k: k\in \{1, \cdots, l\}\}$ $(1- 2 \eta)$-covers $\{1, \cdots, d\}$; and

\item for every $k\in \{1, \cdots, l\}$ and $c\in C_k$, $F_k\ni s\mapsto \sigma_s (c)$ is bijective.
\end{enumerate}
Set $J_\theta= \{1, \cdots, d\}\setminus \cup \{\sigma (F_k) C_k: k\in \{1, \cdots, l\}\}$. Then
\begin{equation} \label{1107130110}
|J_\theta|\le 2 \eta d\ \text{and}\ \sum_{k= 1}^l |F_k|\cdot |C_k|\le \frac{1}{1- \eta} \sum_{k= 1}^l |\sigma (F_k) C_k|\le \frac{d}{1- \eta}.
\end{equation}
Now let $k= 1, \cdots, l$. For any $c_k\in C_k$, as $C_k\subseteq \theta$ and $F_k\subseteq F_l$, by the selection of $\delta$ it is direct to see that we can cover
\begin{eqnarray*}
& & \{(x_i)_{i\in \sigma (F_k) c_k}: (x_1, \cdots, x_d)\in X^d_{F_l, \delta, \sigma, \theta}\} \\
&\subseteq & \left\{(x_i)_{i\in \sigma (F_k) c_k}: \max_{s\in F_k} \rho (x_{\sigma_s (c_k)}, s x)< \delta_k\ \text{for some}\ x\in X\right\}
\end{eqnarray*}
by at most $N (\mathcal{U}_{F_k}, X)$ elements of $\mathcal{U}^{\sigma (F_k) c_k}$, and so it is not hard to cover
$$\{(x_i)_{i\in \sigma (F_k) C_k}: (x_1, \cdots, x_d)\in X^d_{F_l, \delta, \sigma, \theta}\}$$
using at most $N (\mathcal{U}_{F_k}, X)^{|C_k|}$ elements of $\mathcal{U}^{\sigma (F_k) C_k}$. Thus
\begin{eqnarray} \label{1107131006}
\log N (\mathcal{U}^d, X^d_{F_l, \delta, \sigma, \theta})&\le & \sum_{k= 1}^l |C_k| \log N (\mathcal{U}_{F_k}, X)+ |J_\theta| \log |\mathcal{U}|\nonumber \\
&\le & (h^a (G, \mathcal{U})+ \epsilon) \sum_{k= 1}^l |C_k|\cdot |F_k|+ |J_\theta| \log |\mathcal{U}|\ (\text{using \eqref{1107122326}})\nonumber \\
&\le & d \left(\frac{h^a (G, \mathcal{U})+ \epsilon}{1- \eta}+ 2 \eta \log |\mathcal{U}|\right)\ (\text{using \eqref{1107130110}}) \nonumber \\
&\le & d (h^a (G, \mathcal{U})+ 2 \epsilon)\ (\text{using \eqref{1107130122}}).
\end{eqnarray}

Combining \eqref{1107130014} with \eqref{1107131006} we obtain
$$\log N (\mathcal{U}^d, X^d_{F_l, \delta, \sigma})\le d (h^a (G, \mathcal{U})+ 2 \epsilon+ \log (1+ \epsilon)).$$
By the arbitrariness of $\epsilon$ we obtain the conclusion.
\end{proof}

We also have \cite[Lemma 4.6]{KLAJM}, which is an improved version of Lemma \ref{1107110016} for an amenable group. Recall that the group $G$ is amenable.

\begin{lem} \label{1107112242}
Let $0\le \tau< 1$ and $0< \eta< 1$. Then there are an $l\in \N$ and $F_1, \cdots, F_l\in \mathcal{F}_G$ which are sufficiently two-sided invariant such that for every good enough sofic approximation $\sigma: G\rightarrow Sym (d)$ for $G$ with some $d\in \N$ and any set $V\subseteq \{1, \cdots, d\}$ with $|V|\ge (1- \tau) d$, there exist $C_1, \cdots, C_l\subseteq V$ such that
\begin{enumerate}

\item the sets $\sigma (F_k) C_k, k\in \{1, \cdots, l\}$ are pairwise disjoint;

\item $\{\sigma (F_k) C_k: k\in \{1, \cdots, l\}\}$ $(1- \tau- \eta)$-covers $\{1, \cdots, d\}$; and

\item for every $k\in \{1, \cdots, l\}$, the map $F_k\times C_k\ni (s, c)\mapsto \sigma_s (c)$ is bijective.
\end{enumerate}
\end{lem}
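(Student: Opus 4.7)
The plan is to derive Lemma \ref{1107112242} by combining the general sofic Rokhlin lemma (Lemma \ref{1107110016}) with the Ornstein--Weiss quasi-tiling theorem available for amenable groups. The two improvements over Lemma \ref{1107110016} --- that the $F_k$ can be chosen sufficiently two-sided invariant, and that within each level $k$ the family $\{\sigma(F_k)c : c\in C_k\}$ is honestly pairwise disjoint (so that $(s,c)\mapsto \sigma_s(c)$ is bijective on $F_k\times C_k$) rather than merely $\eta$-disjoint --- are both purely group-theoretic consequences of amenability.

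First, using the Ornstein--Weiss quasi-tiling theorem applied to $G$ with a tolerance $\eta_0\ll \eta$ (to be fixed below), I would select $F_1\subseteq \cdots\subseteq F_l\in \mathcal{F}_G$ sufficiently two-sided invariant; in particular, arrange that the chain simultaneously satisfies the hypothesis $|F_{k-1}^{-1}F_k\setminus F_k|\le \eta'|F_k|$ of Lemma \ref{1107110016} for the corresponding $\eta'$ there. Next, apply Lemma \ref{1107110016} with parameters $\tau$ and $\eta_0$ in place of $\eta$ to produce preliminary centres $C_k^\circ\subseteq V$. Its four conclusions then give: each orbit $\sigma(F_k)c$ has cardinality exactly $|F_k|$; the blocks $\{\sigma(F_k)C_k^\circ\}_{k=1}^l$ are already pairwise disjoint across levels; within each level the orbits are only $\eta_0$-disjoint; and the union $(1-\tau-\eta_0)$-covers $\{1,\ldots,d\}$.

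The decisive step is then to prune each $C_k^\circ$ to a subset $C_k$ so that $\{\sigma(F_k)c : c\in C_k\}$ becomes genuinely pairwise disjoint, while sacrificing only an $\eta$-fraction of the density. Here one invokes the quasi-tiling structure: for a generic base point in $V$ --- generic in the sense that the restriction of $\sigma$ to the relevant window is multiplicative, free and injective --- the group-level quasi-tiling of a large Følner window by the $F_k$ transfers through $\sigma$ to a tiling of the corresponding window of $\{1,\ldots,d\}$. Iterating over $k=l,l-1,\ldots,1$ and repeatedly selecting centres from the yet-uncovered portion of $V$ whose $\sigma(F_k)$-orbits truly tile, the cumulative density loss can be bounded by $l\eta_0$ via a double-count comparing $\sum_c |\sigma(F_k)c|$ with $|\sigma(F_k)C_k^\circ|$; choosing $\eta_0<\eta/(2l)$ then preserves the $(1-\tau-\eta)$-cover.

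The main obstacle is making the transfer in the last step rigorous: showing that the purely arithmetic disjointness supplied by a group-theoretic quasi-tiling persists after passing through a sofic approximation. This requires the $F_k$ to be two-sided invariant enough that almost every window $\sigma(F_k)c$ is isomorphic to $F_k$, and simultaneously the sofic approximation $\sigma$ to be fine enough that the exceptional ``bad'' base points contribute negligibly to the measure; the precise balance between these two quantitative inputs is the technical heart of \cite[Lemma~4.6]{KLAJM}.
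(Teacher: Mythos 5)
The paper does not prove this lemma at all: it is imported verbatim as \cite[Lemma 4.6]{KLAJM}, so the ``proof'' the paper offers is a citation. Your sketch tries to rederive it from Lemma \ref{1107110016} together with Ornstein--Weiss quasi-tiling, which is the right general direction, but the decisive step is not actually carried out. The gap is the upgrade from $\eta_0$-disjointness of $\{\sigma(F_k)c : c\in C_k^\circ\}$ to genuine pairwise disjointness with the \emph{common} shape $F_k$. Your claimed bound of $l\eta_0$ on the density loss ``via a double-count comparing $\sum_c|\sigma(F_k)c|$ with $|\sigma(F_k)C_k^\circ|$'' does not follow from the stated hypotheses: $\eta_0$-disjointness only supplies pairwise disjoint cores $B_c\subseteq\sigma(F_k)c$ with $|B_c|\ge(1-\eta_0)|F_k|$, and a maximal genuinely disjoint subfamily of such a family can omit almost all of its members. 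Indeed, a point $a$ lies in $\sigma(F_k)c$ exactly when $c\in\{\sigma_s^{-1}(a):s\in F_k\}$, so a single point can belong to up to $|F_k|$ tiles; the standard charging argument then only yields that a maximal disjoint subfamily retains a fraction of order $1/|F_k|^{2}$ of the tiles in the worst case, and since $\eta_0$ must be fixed \emph{before} the $F_k$ are chosen, this cannot be absorbed into $\eta$. Nor can one repair this by shrinking each tile to its core $B_c$: the cores have varying shapes $\Lambda_c\subsetneq F_k$, while the conclusion requires every level-$k$ tile to be a bijective copy of one and the same $F_k$.

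The paragraph in which you invoke ``the group-level quasi-tiling \ldots transfers through $\sigma$ to a tiling'' is precisely the content that needs to be proved, and you close by deferring ``the precise balance'' to \cite[Lemma~4.6]{KLAJM} itself --- which makes the argument circular. Either cite the lemma outright, as the paper does, or carry out the transfer honestly: isolate the set of base points $a$ for which $s\mapsto\sigma_s(a)$ is injective and multiplicative on a large two-sided invariant window, show that this set has density close to $1$ for a good enough sofic approximation, and perform the Ornstein--Weiss disjointification inside each such window while tracking how the exceptional points degrade the covering fraction. As written, the proposal has a genuine gap at its central step.
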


Thus, following the ideas of \cite[Lemma 5.2]{KLAJM} we have:

\begin{lem} \label{1107111610}
Let $\mathcal{U}\in \mathcal{C}_X^o$. Then $h (G, \mathcal{U})\ge h^a (G, \mathcal{U})$.
\end{lem}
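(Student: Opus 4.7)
The plan is to mirror the strategy of \cite[Lemma 5.2]{KLAJM}, constructing many pairwise distinguishable sofic test $d$-tuples inside $X^d_{F,\delta,\sigma}$ via the amenable Rokhlin tower provided by Lemma \ref{1107112242}.

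Fix $F\in\mathcal{F}_G$, $\delta>0$, and $\epsilon>0$. By the Ornstein--Weiss theorem combined with Lemma \ref{1107112242}, choose an $l\in\mathbb{N}$, sufficiently two-sided invariant $F_1,\ldots,F_l\in\mathcal{F}_G$, and small tiling parameters $\tau,\eta>0$ so that $|F_k|^{-1}\log N(\mathcal{U}_{F_k},X)\ge h^a(G,\mathcal{U})-\epsilon$ for every $k$. For each $k$, fix a minimum cover $\mathcal{W}_k\subseteq\mathcal{U}^{F_k}$ of the orbit set $X_{F_k}=\{(sx)_{s\in F_k}:x\in X\}$, with $|\mathcal{W}_k|=N(\mathcal{U}_{F_k},X)$ by Lemma \ref{1107122331}; for each $W\in\mathcal{W}_k$ select a witness point $y_W\in X$ with $(sy_W)_{s\in F_k}\in W$.

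Given a good enough sofic approximation $\sigma:G\to Sym(d)$, apply Lemma \ref{1107112242} to obtain $C_1,\ldots,C_l\subseteq\{1,\ldots,d\}$. For every map $\phi$ with $\phi(c)\in\mathcal{W}_k$ whenever $c\in C_k$, define a test tuple $x^\phi\in X^d$ by $x^\phi_{\sigma_s(c)}=sy_{\phi(c)}$ for $c\in C_k,s\in F_k$ (this is well defined by the injectivity of $(s,c)\mapsto\sigma_s(c)$ in Lemma \ref{1107112242}), and fill the remaining coordinates of $x^\phi$ with a fixed base point. The near-multiplicativity of $\sigma$ and the two-sided invariance of each $F_k$ relative to $F$ ensure that for each $t\in F$ we have $\sigma_{ts}(c)=\sigma_t\sigma_s(c)$ for the bulk of pairs $(s,c)$, yielding $t\cdot x^\phi_{\sigma_s(c)}=x^\phi_{\sigma_{ts}(c)}=x^\phi_{\sigma_t(\sigma_s(c))}$ on a large-fraction set, so that after the $\ell^2$-averaging in the definition, $x^\phi\in X^d_{F,\delta,\sigma}$.

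The crux is then to show that the family $\{x^\phi\}_\phi$ requires at least $\prod_k N(\mathcal{U}_{F_k},X)^{|C_k|}$ distinct cells of $\mathcal{U}^d$ to cover, up to sub-exponential factors in $d$. A direct count shows that for a given cell $V=\prod_i U_i\in\mathcal{U}^d$, the number of $\phi$'s with $x^\phi\in V$ equals $\prod_k\prod_{c\in C_k}|\{W\in\mathcal{W}_k:y_W\in\bigcap_{s\in F_k}s^{-1}U_{\sigma_s(c)}\}|$, because $x^\phi\in V$ forces each witness $y_{\phi(c)}$ to lie in the specific element of $\mathcal{U}_{F_k}$ cut out by $V$ restricted to the block $\sigma(F_k)c$. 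The hardest step, and the main obstacle, is to choose the witnesses $(y_W)_W$ in a sufficiently well-spread fashion so that no element of $\mathcal{U}_{F_k}$ contains more than sub-exponentially many of them; this is the covering analogue of the maximally $(\rho,F_k,\epsilon)$-separated set used in the metric setting of \cite{KLAJM}, and no fully separated choice is available in general. Granting this per-cell multiplicity bound, combining $\sum_k|C_k|\cdot|F_k|\ge(1-\tau-\eta)d$ from Lemma \ref{1107112242} with $\log N(\mathcal{U}_{F_k},X)\ge(h^a(G,\mathcal{U})-\epsilon)|F_k|$ gives $d^{-1}\log N(\mathcal{U}^d,X^d_{F,\delta,\sigma})\ge(1-\tau-\eta)(h^a(G,\mathcal{U})-\epsilon)-o(1)$; letting $\tau,\eta,\epsilon\to 0$ and taking the infimum over $F,\delta$ yields $h(G,\mathcal{U})\ge h^a(G,\mathcal{U})$.
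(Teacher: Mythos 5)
Your setup---the quasi-tiling from Lemma \ref{1107112242}, the model points supported block-by-block on the sets $\sigma(F_k)c$, and the verification that they land in $X^d_{F,\delta,\sigma}$---matches the paper's. The genuine gap is exactly where you flag it, and it is not a technical nuisance that a cleverer choice of witnesses would remove: for a finite open cover there is in general \emph{no} choice of one witness per element of a minimal subcover $\mathcal{W}_k$ such that every element of $\mathcal{U}_{F_k}$ contains only few witnesses. Concretely, take $X=\{a,b,c\}$ and $\mathcal{U}=\{\{a,b\},\{b,c\},\{a,c\}\}$, so that (say with trivial dynamics) $\mathcal{U}_{F_k}=\mathcal{U}$, $N(\mathcal{U}_{F_k},X)=2$ and $\mathcal{W}_k$ consists of two sets: whichever two witnesses you pick, some single element of $\mathcal{U}$ contains both of them, so the per-coordinate multiplicity in your count is $2=|\mathcal{W}_k|$ and the lower bound collapses to $1$ on each block. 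Covers simply do not admit the separated-set bookkeeping that drives the metric argument of \cite[Lemma 5.2]{KLAJM}, which is why the witness-counting scheme has to be abandoned rather than patched.

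The paper's proof avoids choosing witnesses altogether. It assigns a model point $(x_1,\dots,x_d)$ to \emph{every} tuple $(y_1,\dots,y_l)\in\prod_{k}X^{C_k}$ and observes that if that point lies in a cell $U_1\times\cdots\times U_d$ of $\mathcal{U}^d$, then $(y_1,\dots,y_l)$ lies in the element $\prod_k\prod_{c\in C_k}\bigcap_{s\in F_k}s^{-1}U_{\sigma_s(c)}$ of the product cover $\prod_k(\mathcal{U}_{F_k})^{C_k}$. Hence any subfamily of $\mathcal{U}^d$ covering $X^d_{F,\delta,\sigma}$ pulls back to a subfamily of $\prod_k(\mathcal{U}_{F_k})^{C_k}$ of no larger cardinality covering $\prod_k X^{C_k}$, giving $N(\mathcal{U}^d,X^d_{F,\delta,\sigma})\ge N\bigl(\prod_k(\mathcal{U}_{F_k})^{C_k},\prod_k X^{C_k}\bigr)$ with no multiplicity analysis; the numerical estimate then comes from identifying the right-hand side with $\prod_k N(\mathcal{U}_{F_k},X)^{|C_k|}$. (That last identification itself deserves scrutiny---covering numbers of product covers are only submultiplicative in general, as the three-point example above already shows---but it is in any case the reduction you should be aiming for.) To repair your argument, replace the finite family $\{x^\phi\}$ by the family parametrized by all of $\prod_k X^{C_k}$ and argue by pulling back subcovers, rather than by counting incidences against a finite set of test points.
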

\begin{proof}
Let $\theta> 0$ and $F\in \mathcal{F}_G, \delta> 0$. Now we are to finish the proof by proving
\begin{equation} \label{1107111631}
\frac{1}{d} \log N (\mathcal{U}^d, X^d_{F, \delta, \sigma})\ge h^a (G, \mathcal{U})- 2 \theta
\end{equation}
 once $\sigma: G\rightarrow Sym (d)$ is a good enough sofic approximation for $G$ with some $d\in \N$.

 Let $M> 0$ large enough and $\delta'> 0$ small enough such that the diameter of the space $X$ is at most $M$ and
\begin{equation} \label{1107111645}
\sqrt{\delta'} M< \frac{\delta}{2}\ \text{and}\ (1- \delta') h^a (G, \mathcal{U})\ge h^a (G, \mathcal{U})- \theta.
\end{equation}

Applying Lemma \ref{1107112242}, there are an $l\in \N$ and $F_1, \cdots, F_l\in \mathcal{F}_G$, which are sufficiently left invariant so that
\begin{equation} \label{1107120045}
\min_{k= 1}^l \min_{s\in F} \frac{|s^{- 1} F_k\cap F_k|}{|F_k|}\ge 1- \delta'
\end{equation}
and
\begin{equation} \label{1107112321}
\min_{k= 1}^l \frac{1}{|F_k|} \log N (\mathcal{U}_{F_k}, X)\ge h^a (G, \mathcal{U})- \theta,
\end{equation}
such that once $\sigma: G\rightarrow Sym (d)$ is a good enough sofic approximation for $G$ with some $d\in \N$ then there exist $C_1, \cdots, C_l\subseteq \{1, \cdots, d\}$ satisfying
\begin{enumerate}

\item the sets $\sigma (F_k) C_k, k\in \{1, \cdots, l\}$ are pairwise disjoint;

\item $\{\sigma (F_k) C_k: k\in \{1, \cdots, l\}\}$ $(1- \delta')$-covers $\{1, \cdots, d\}$;

\item for every $k\in \{1, \cdots, l\}$, the map $F_k\times C_k\ni (s, c)\mapsto \sigma_s (c)$ is bijective; and

\item for all $k\in \{1, \cdots, l\}$ and $s\in F, s_k\in F_k, c_k\in C_k$, $\sigma_{s s_k} (c_k)= \sigma_s \sigma_{s_k} (c_k)$.
\end{enumerate}
Remark again that the group $G$ is amenable, such subsets $F_1, \cdots, F_l$ must exist.

Now assume that $\sigma: G\rightarrow Sym (d)$ is a good enough sofic approximation for $G$ with some $d\in \N$ and let $C_1, \cdots, C_l\subseteq \{1, \cdots, d\}$ be constructed as above.
Let $(y_1, \cdots, y_l)$ be any $l$-tuple with $y_k\in X^{C_k}, k\in \{1, \cdots, l\}$. From the construction of $C_1, \cdots, C_l$, it is not hard to see that there exists at least one point $(x_1, \cdots, x_d)\in X^d$ such that once $i\in \sigma (F_k) C_k$ for some $k\in \{1, \cdots, l\}$, say $i= \sigma_{s_k} (c_k)$ with $s_k\in F_k$ and $c_k\in C_k$, then $x_i= s_k y_k (c_k)$.
Let $(x_1, \cdots, x_d)\in X^d$ be such a point.

Let $s\in F$ and $i\in \{1, \cdots, d\}$. Once $i= \sigma_{s_k} (c_k)$ for some $s_k\in F_k$ and $c_k\in C_k, k\in \{1, \cdots, l\}$, if $s s_k\in F_k$, then $s x_i= s s_k y_k (c_k)= x_{\sigma_{s s_k} (c_k)}= x_{\sigma_s \sigma_{s_k} (c_k)}= x_{\sigma_s (i)}$. Which implies that
\begin{equation} \label{1107120038}
\frac{1}{d} \sum_{i= 1}^d \rho^2 (s x_i, x_{\sigma_s (i)})= \frac{1}{d} \sum_{i\in \{1, \cdots, d\}\setminus E} \rho^2 (s x_i, x_{\sigma_s (i)})\le \frac{M^2}{d} |\{1, \cdots, d\}\setminus E|,
\end{equation}
where
$$E= \bigcup_{k= 1}^l \sigma (s^{- 1} F_k\cap F_k) C_k.$$
Using the construction of $C_1, \cdots, C_l$ again, by \eqref{1107120045} one has
\begin{equation} \label{1107120043}
|E|= \sum_{k= 1}^l |s^{- 1} F_k\cap F_k|\cdot |C_k|\ge (1- \delta') \sum_{k= 1}^l |F_k|\cdot |C_k|\ge d (1- 2 \delta').
\end{equation}
Combining \eqref{1107120038} with \eqref{1107120043}, we obtain
$$\frac{1}{d} \sum_{i= 1}^d \rho^2 (s x_i, x_{\sigma_s (i)})\le 2 \delta' M^2.$$
In particular, $(x_1, \cdots, x_d)\in X^d_{F, \delta, \sigma}$ follows from the selection of $\delta'$.
Now assume $(x_1, \cdots, x_d)\in U_1\times \cdots\times U_d$ for some $U_1, \cdots, U_d\in \mathcal{U}$. For each $k\in \{1, \cdots, l\}$, and any $s_k\in F_k, c_k\in C_k$, $y_k (c_k)= s_k^{- 1} x_{\sigma_{s_k} (c_k)}\in s_k^{- 1} U_{\sigma_{s_k} (c_k)}$, and so $y_k (c_k)$ is contained in the element $\bigcap\limits_{s_k\in F_k} s_k^{- 1} U_{\sigma_{s_k} (c_k)}$ of $\mathcal{U}_{F_k}$. Thus $(y_1, \cdots, y_l)$ is contained in the element $\prod\limits_{k= 1}^l \prod\limits_{c_k\in C_k} \bigcap\limits_{s_k\in F_k} s_k^{- 1} U_{\sigma_{s_k} (c_k)}$ of $\prod\limits_{k= 1}^l (\mathcal{U}_{F_k})^{C_k}$.

From the above discussions one has
\begin{eqnarray} \label{1107120112}
\log N (\mathcal{U}^d, X^d_{F, \delta, \sigma})&\ge & \log N \left(\prod\limits_{k= 1}^l (\mathcal{U}_{F_k})^{C_k}, \prod\limits_{k= 1}^l X^{C_k}\right)\nonumber \\
&= & \sum_{k= 1}^l |C_k| \log N (\mathcal{U}_{F_k}, X)\nonumber \\
&\ge & \sum_{k= 1}^l |C_k|\cdot |F_k| (h^a (G, \mathcal{U})- \theta)\ (\text{using \eqref{1107112321}})\nonumber \\
&\ge & \sum_{k= 1}^l |C_k|\cdot |F_k| h^a (G, \mathcal{U})- d \theta\nonumber \\
&\ge & d (1- \delta') h^a (G, \mathcal{U})- d \theta.
\end{eqnarray}
Then \eqref{1107111631} follows from \eqref{1107111645} and \eqref{1107120112}.
\end{proof}

Theorem \ref{1107101454} follows from Lemma \ref{1107120120} and Lemma \ref{1107111610}.

Now let's turn to the proof of Theorem \ref{1107101633}.

Let $\nu\in \mathcal{M} (X)$ and $\mathcal{V}\in \mathcal{C}_X, 0< a< 1, F\in \mathcal{F}_G$. Set
$$b_\nu (F, a, \mathcal{V})= \min
\{|\mathcal{C}|: \mathcal{C}\subseteq \mathcal{V}_F\ \text{and}\ \nu (\cup \mathcal{C})\ge a\}.$$

Inspired by \cite[Lemma 5.11]{We} it is not hard to obtain \cite[Lemma 4.15]{HYZ}.

\begin{lem} \label{1107142100}
Let $\nu\in \mathcal{M} (X)$ and $\mathcal{V}\in \mathcal{C}_X, 0< a< 1, F\in \mathcal{F}_G$. Then
\begin{equation*}
 H_\nu (\mathcal{V}_F)\le \log b_\nu (F, a, \mathcal{V})+ (1- a) |F| \log N (\mathcal{V}, X)+ \log 2.
\end{equation*}
\end{lem}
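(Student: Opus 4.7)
The plan is to construct an explicit partition $\alpha \succeq \mathcal{V}_F$ whose entropy we can bound directly, since by definition $H_\nu(\mathcal{V}_F) = \inf_{\alpha \in \mathcal{P}_X, \alpha \succeq \mathcal{V}_F} H_\nu(\alpha)$. The construction naturally splits $X$ into a ``good'' region covered by few elements of $\mathcal{V}_F$ (where we save by taking small mass) and its complement (where we pay only the $(1-a)$ fraction of the mass).

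First, pick a subfamily $\mathcal{C} = \{C_1, \ldots, C_m\} \subseteq \mathcal{V}_F$ with $m = b_\nu(F, a, \mathcal{V})$ and $\nu(A) \ge a$, where $A = \bigcup_{i=1}^m C_i$. Disjointify to obtain a partition $\beta = \{\beta_1, \ldots, \beta_m\}$ of $A$ by setting $\beta_i = C_i \setminus (C_1 \cup \cdots \cup C_{i-1})$; since $\beta_i \subseteq C_i \in \mathcal{V}_F$, the partition $\beta$ refines $\mathcal{V}_F$ on $A$ and has $|\beta| \le m$. Next, choose a subcover $\mathcal{V}_0 \subseteq \mathcal{V}$ with $|\mathcal{V}_0| = N(\mathcal{V}, X)$. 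Then $(\mathcal{V}_0)_F \subseteq \mathcal{V}_F$ is a cover of $X$ with at most $N(\mathcal{V}, X)^{|F|}$ elements; disjointify it and intersect with $A^c$ to obtain a partition $\gamma$ of $A^c$ refining $\mathcal{V}_F$ on $A^c$, with $|\gamma| \le N(\mathcal{V}, X)^{|F|}$. Let $\alpha = \beta \cup \gamma \in \mathcal{P}_X$. By construction $\alpha \succeq \mathcal{V}_F$.

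Now apply the standard conditional-entropy identity with respect to the two-element partition $\eta = \{A, A^c\}$. Since $\alpha$ refines $\eta$, we have $H_\nu(\alpha) = H_\nu(\eta) + H_\nu(\alpha \mid \eta)$. The first term satisfies $H_\nu(\eta) \le \log 2$. The second term decomposes as
\begin{equation*}
H_\nu(\alpha \mid \eta) = \nu(A) H_{\nu(\cdot \mid A)}(\beta) + \nu(A^c) H_{\nu(\cdot \mid A^c)}(\gamma) \le \nu(A) \log |\beta| + \nu(A^c) \log |\gamma|,
\end{equation*}
and bounding $\nu(A) \le 1$, $\nu(A^c) \le 1-a$, and using the cardinality bounds on $\beta$ and $\gamma$ yields $H_\nu(\alpha \mid \eta) \le \log b_\nu(F, a, \mathcal{V}) + (1-a) |F| \log N(\mathcal{V}, X)$. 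Combining these estimates and using $H_\nu(\mathcal{V}_F) \le H_\nu(\alpha)$ gives the claim.

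There is no real obstacle here; the only subtle point is making sure the partition on $A^c$ can be arranged to have at most $N(\mathcal{V}, X)^{|F|}$ atoms (rather than the cruder $|\mathcal{V}|^{|F|}$), which is why one passes to the minimal subcover $\mathcal{V}_0$ of $\mathcal{V}$ before forming the join over $F$.
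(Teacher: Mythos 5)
Your proof is correct, and it is essentially the standard argument behind this estimate (the paper itself omits the proof, citing Lemma 4.15 of the Huang--Ye--Zhang paper, which in turn follows Lemma 5.11 of Weiss): condition on the two-set partition $\{A, A^c\}$, paying $\log 2$, refine $\mathcal{V}_F$ on $A$ by disjointifying the minimal subfamily of mass at least $a$, and refine it on $A^c$ by disjointifying $(\mathcal{V}_0)_F$ for a minimal subcover $\mathcal{V}_0\subseteq\mathcal{V}$. Your observation that one must pass to the minimal subcover $\mathcal{V}_0$ before forming the join over $F$, so as to get the bound $N(\mathcal{V},X)^{|F|}$ rather than $|\mathcal{V}|^{|F|}$, is exactly the right point of care.
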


Observe that by \cite[Page 204 and Theorem 4.2]{Vara} there exists a surjective Borel map $X\rightarrow \mathcal{M}^e (X, G), x\mapsto \mu_x$ such that
\begin{enumerate}

\item $\mu_{s x}= \mu_x$ for all $x\in X$ and $s\in G$;

\item for each $\nu\in \mathcal{M}^e (X, G)$, $\nu$ is the unique $\mu\in \mathcal{M} (X, G)$ with $\mu (X_\nu)= 1$, where $X_\nu= \{x\in X: \mu_x= \nu\}$; and

\item for every $\mu\in \mathcal{M} (X, G)$ and $A\in \mathcal{B}_X$ one has $\mu (A)= \int_X \mu_x (A) d \mu (x)$.
\end{enumerate}
Furthermore, it is essentially unique in the sense that if $x\mapsto \mu_x'$ is another map satisfying the same properties then there exists $B\in \mathcal{B}_{X, G}$ such that $\mu (B)= 0$ for every $\mu\in \mathcal{M} (X, G)$ and $\mu_x= \mu_x'$ for each $x\in X\setminus B$, where
$$\mathcal{B}_{X, G}= \{A\in \mathcal{B}_X: s A= A\ \text{for all}\ s\in G\}.$$
Let $\mu\in \mathcal{M} (X, G)$. Then $\mu= \int_X \mu_x d \mu (x)$ is the \emph{ergodic decomposition} of $\mu$ (from now on we will fix it without any special statement) and $\E_\mu (f| \mathcal{B}_{X, G}) (x)= \int_X f d \mu_x$ for $\mu$-a.e. $x\in X$ once $f$ is a real-valued bounded Borel measurable function over $X$, where $\E_\mu (f| \mathcal{B}_{X, G})$ denotes the $\mu$-conditional expectation of $f$ relative to $\mathcal{B}_{X, G}$. In particular, if $B\in \mathcal{B}_{X, G}$ then $\mu_x (B)= 1$ for $\mu$-a.e. $x\in B$. Moreover, if $G$ is infinite then for each $\mathcal{V}\in \mathcal{C}_X$ one has \cite[Lemma 3.12]{HYZ}:
\begin{equation} \label{1107142149}
h_\mu^a (G, \mathcal{V})= \int_X h_{\mu_x}^a (G, \mathcal{V}) d \mu (x).
\end{equation}
If $G$ is finite then it is easy to see that for each $\mathcal{V}\in \mathcal{C}_X$ one has
\begin{equation*} \label{1107282211}
h_\mu^a (G, \mathcal{V})= \inf_{\alpha\in \mathcal{P}_X, \alpha\succeq \mathcal{V}_G} \frac{1}{|G|} H_\mu (\alpha).
\end{equation*}

Let's recall the following result from \cite{HYZ} (see \cite[Lemma 3.6]{HYZ} and \cite[Proposition 3.9]{HYZ} for the case that $G$ is finite and infinite, respectively).

\begin{lem} \label{1107151941}
Let $\mathcal{U}\in \mathcal{C}_X^o$. Then the bounded function $\mathcal{M} (X, G)\ni \mu\mapsto h_\mu^a (G, \mathcal{U})$ is Borel measurable. \end{lem}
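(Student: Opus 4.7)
The plan is to exploit the fact that, although $H_\mu(\mathcal{V})$ is defined as an infimum over the uncountable family of refining Borel partitions, Romagnoli's observation \eqref{1107160141} reduces it to a minimum over a finite family $P(\mathcal{V})$ that depends only on $\mathcal{V}$. Once this reduction is in place, measurability will follow from standard facts about the weak$^*$ topology on $\mathcal{M}(X)$, together with the Ornstein--Weiss convergence definition of $h_\mu^a(G,\mathcal{U})$.

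First I would record the routine fact that for every Borel subset $A\subseteq X$ the map $\mathcal{M}(X)\ni \nu\mapsto \nu(A)$ is Borel measurable with respect to the weak$^*$ topology: it is lower semicontinuous for $A$ open, and the family of $A$ for which measurability holds is a Dynkin system, hence equals $\mathcal{B}_X$. From this, for every finite Borel partition $\alpha\in \mathcal{P}_X$ the real-valued function $\nu\mapsto H_\nu(\alpha)=-\sum_{A\in\alpha}\nu(A)\log \nu(A)$ is Borel measurable on $\mathcal{M}(X)$, being a finite algebraic combination of Borel measurable functions with the continuous kernel $t\mapsto -t\log t$.

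Next, fix $F\in \mathcal{F}_G$. Applying \eqref{1107160141} to $\mathcal{U}_F\in \mathcal{C}_X$ produces a finite family $P(\mathcal{U}_F)\subseteq \{\alpha\in \mathcal{P}_X:\alpha\succeq \mathcal{U}_F\}$ \emph{independent of} $\nu$ such that
\[
H_\nu(\mathcal{U}_F)=\min_{\alpha\in P(\mathcal{U}_F)} H_\nu(\alpha)\qquad\text{for every }\nu\in \mathcal{M}(X).
\]
Thus $\nu\mapsto \frac{1}{|F|}H_\nu(\mathcal{U}_F)$, being a minimum over finitely many Borel measurable functions, is itself Borel measurable, and this remains true after restricting the domain to $\mathcal{M}(X,G)$.

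Finally, if $G$ is finite we are done by the identity $h_\mu^a(G,\mathcal{U})=\frac{1}{|G|}H_\mu(\mathcal{U}_G)$ recorded just before the lemma. If $G$ is infinite and amenable, choose any fixed F\o lner sequence $\{F_n\}\subseteq \mathcal{F}_G$; the Ornstein--Weiss lemma gives
\[
h_\mu^a(G,\mathcal{U})=\lim_{n\to\infty}\frac{1}{|F_n|}H_\mu(\mathcal{U}_{F_n})\qquad\text{for every }\mu\in \mathcal{M}(X,G),
\]
exhibiting the function in question as a pointwise limit of Borel measurable functions, hence Borel measurable. Boundedness is immediate from $0\le h_\mu^a(G,\mathcal{U})\le \log |\mathcal{U}|$. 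The only non-routine step in this plan is the Romagnoli reduction to a finite, $\mu$-independent family of refining partitions; without it, the infimum defining $H_\mu(\mathcal{V})$ ranges over an uncountable set, and measurability would be significantly more delicate.
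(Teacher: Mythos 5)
Your proof is correct, and it uses exactly the ingredients the paper has already put in place: the Romagnoli reduction \eqref{1107160141} to a finite, $\nu$-independent family $P(\mathcal{U}_F)$, the Borel measurability of $\nu\mapsto\nu(A)$ via the Dynkin/$\pi$-system argument, and the realization of the Ornstein--Weiss limit along a fixed F\o lner sequence. The paper itself offers no proof here --- it only cites \cite[Lemma 3.6]{HYZ} and \cite[Proposition 3.9]{HYZ} --- but the cited arguments rest on the same reduction, so your write-up is essentially the standard proof made self-contained.
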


Now following the ideas of Lemma \ref{1107111610} and \cite[Lemma 6.4]{KLAJM} let us prove:

\begin{lem} \label{1107131641}
Let $\mathcal{U}\in \mathcal{C}_X^o, \mu\in \mathcal{M} (X, G)$ and $\delta> 0, L\in \mathcal{F}_{C (X)}, F\in \mathcal{F}_G$. Then
$h_{F, \delta, \mu, L} (G, \mathcal{U})\ge \int_X h_{\mu_x}^a (G, \mathcal{U}) d \mu (x)$.
\end{lem}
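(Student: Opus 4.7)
The plan is to mimic the argument of Lemma~\ref{1107111610}, which established the analogous topological lower bound $h(G,\mathcal{U})\ge h^a(G,\mathcal{U})$, while (i) incorporating the measure-approximation constraint in the definition of $X^d_{F,\delta,\sigma,\mu,L}$, and (ii) replacing the global amenable invariant $h^a(G,\mathcal{U})$ by the fibrewise invariants $h^a_{\mu_x}(G,\mathcal{U})$.

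First I would do a finite approximation step. Fix $\theta>0$. Using Lemma~\ref{1107151941} together with the Borel structure on $\mathcal{M}^e(X,G)$, I partition $X$ into Borel $G$-invariant sets $X_1,\dots,X_n$ of positive $\mu$-measure and pick $\nu_j\in\mathcal{M}^e(X,G)$ with $\nu_j(X_j)=1$ such that $h^a_{\nu_j}(G,\mathcal{U})\ge h^a_{\mu_x}(G,\mathcal{U})-\theta$ for $\mu$-a.e.\ $x\in X_j$. By \eqref{1107142149}, it then suffices to exhibit, for every good enough sofic approximation $\sigma_i\colon G\to Sym(d_i)$, at least
$$\exp\Bigl(d_i\sum_{j=1}^{n}\mu(X_j)\bigl(h^a_{\nu_j}(G,\mathcal{U})-2\theta\bigr)\Bigr)$$
distinct elements of $\mathcal{U}^{d_i}$ each meeting $X^{d_i}_{F,\delta,\sigma_i,\mu,L}$, and then send $\theta\to 0$.

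Next, for each $j$ I pick sufficiently two-sided invariant shapes $F_1^{(j)},\dots,F_{l_j}^{(j)}\in\mathcal{F}_G$, supplied by Lemma~\ref{1107112242}, so that the amenable lower bound on $H_{\nu_j}(\mathcal{U}_{F_k^{(j)}})$ (together with Lemma~\ref{1107142100} used in reverse) produces at least $\exp(|F_k^{(j)}|(h^a_{\nu_j}(G,\mathcal{U})-\theta))$ distinct $\mathcal{U}_{F_k^{(j)}}$-names realised on a $\nu_j$-typical orbit segment, while the pointwise ergodic theorem for amenable group actions arranges that along any such segment the empirical averages $|F_k^{(j)}|^{-1}\sum_{s\in F_k^{(j)}}f(sy)$ are within $\delta/2$ of $\nu_j(f)$ for every $f\in L$. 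I then invoke Lemma~\ref{1107112242} in a stratified form on the sofic side: for $\sigma_i$ good enough, $\{1,\dots,d_i\}$ is tiled by translates $\sigma_i(F_k^{(j)})C_k^{(j)}$ in such a way that, for each $j$, the total fraction of indices allocated to shapes $F_k^{(j)}$ is within $\theta$ of $\mu(X_j)$. Inside each tile I plant one of the $\nu_j$-typical $\mathcal{U}_{F_k^{(j)}}$-names exactly as in the proof of Lemma~\ref{1107111610}, filling in the vanishingly small uncovered complement arbitrarily. The resulting point lies in $X^{d_i}_{F,\delta,\sigma_i}$ by the computation of that lemma, while its empirical measure $d_i^{-1}\sum_i\delta_{x_i}$ is, up to error $O(\theta)$ on each $f\in L$, equal to $\sum_j\mu(X_j)\nu_j=\mu$, so it also lies in $X^{d_i}_{F,\delta,\sigma_i,\mu,L}$. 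Counting the configurations as $\prod_{j,k}\exp(|C_k^{(j)}|\,|F_k^{(j)}|(h^a_{\nu_j}(G,\mathcal{U})-\theta))$ gives the desired estimate.

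The main obstacle is the simultaneous balancing in the tiling step: one must distribute the sofic tiles across the ergodic strata $X_1,\dots,X_n$ in proportions matching $\mu(X_1),\dots,\mu(X_n)$ while still having the amenable Rokhlin shapes $F_k^{(j)}$ compatible with the sofic action in the sense of Lemma~\ref{1107112242}. This is where the infinitude of $G$ (assumed in the ambient Theorem~\ref{1107101633}) is crucial: for infinite $G$ one has $d_i\to\infty$, leaving enough room to allocate tiles of different ergodic flavours in the right proportions, and the ergodic decomposition formula \eqref{1107142149} is available, while for finite $G$ this delicate interplay breaks down, matching the open case flagged in the introduction.
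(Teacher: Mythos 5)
Your overall strategy --- cut $X$ into finitely many $G$-invariant pieces, plant ergodically typical $\mathcal{U}_{F_k}$-names into the tiles supplied by Lemma \ref{1107112242}, and count via Lemma \ref{1107142100} --- is indeed the paper's strategy, but there is a genuine gap in how you enforce the constraint coming from $L$. You choose the invariant partition $X_1,\dots,X_n$ only so that $h^a_{\mu_x}(G,\mathcal{U})$ is nearly constant on each piece, then replace $\mu$ restricted to $X_j$ by a single ergodic measure $\nu_j$ and assert that the empirical measure of the constructed point is, up to $O(\theta)$ on $L$, equal to $\sum_j\mu(X_j)\nu_j=\mu$. That identity is false in general: $\mu(\cdot\cap X_j)/\mu(X_j)$ is typically a nontrivial mixture of ergodic measures, and two ergodic components with the same entropy can assign very different values to an $f\in L$. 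Take $n=1$ and $\mu=\frac{1}{2}(\nu+\nu')$ with $\nu,\nu'$ ergodic of equal entropy but $\nu(f)\neq\nu'(f)$: your construction produces points whose empirical $f$-average is near $\nu(f)$, not $\mu(f)$, so they fail to lie in $X^d_{F,\delta,\sigma,\mu,L}$ as soon as $\delta<\frac{1}{2}|\nu(f)-\nu'(f)|$, and the count you obtain is a lower bound for the wrong set. The missing idea, which is exactly the paper's condition \eqref{1107142211}, is to refine the invariant partition until $x\mapsto\mu_x(f)$ oscillates by less than $\delta/8$ on each atom for every $f\in L$; then any choice of ergodic representative in an atom $B$ reproduces $\frac{1}{\mu(B)}\int_B f\,d\mu$ up to a small error, and weighting the tiles by proportions $|C_{k,B}|/|C_k|\approx\mu(B)$ as in \eqref{1107142340} recovers $\mu(f)$ within $\delta$.

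Two further points. The ``stratified'' tiling you invoke is not literally Lemma \ref{1107112242}; the paper gets the same effect by tiling once with a single family $F_1,\dots,F_l$ and then subdividing each base $C_k$ into pieces $C_{k,B}$ of the right relative sizes, which is how your allocation step should be justified. Also, the lemma as stated does not assume $G$ infinite, and the paper proves it for finite $G$ too: there the $\log 2$ error in Lemma \ref{1107142100} cannot be killed by making $|F_k|$ large, and one instead uses that ergodic measures of a finite group action are uniform on finite orbits, so with $\kappa\le\frac{1}{2|G|}$ a set of measure greater than $1-\kappa$ has full measure and $b_{\nu}(F_k,1-\kappa,\mathcal{U})\ge e^{H_\nu(\mathcal{U}_{F_k})}$ directly. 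Correspondingly, the role of infiniteness is not that $d_i\to\infty$ (a standing assumption on the sofic approximation sequence in any case) but that $|F_k|$ can be taken large and that \eqref{1107142149} is available for the ambient Theorem \ref{1107101633}.
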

\begin{proof}
Let $1> \epsilon> 0$. We are to prove $h_{F, \delta, \mu, L} (G, \mathcal{U})\ge \int_X h_{\mu_x}^a (G, \mathcal{U}) d \mu (x)- \epsilon$.

Let $\kappa> 0$ such that $\kappa (2+ |\mathcal{U}|)\le \frac{\epsilon}{2}$ and $\kappa\le \frac{1}{2 |G|}$ if additionally $G$ is finite.

As all $x\mapsto \mu_x (f), f\in L$ and $x\mapsto h_{\mu_x}^a (G, \mathcal{U})$ are bounded $\mathcal{B}_{X, G}$-measurable function over $X$ (using Lemma \ref{1107151941}), there exists $\widetilde{B}\in \mathcal{P}_X$ such that $\widetilde{B}\subseteq \mathcal{B}_{X, G}$ and
\begin{equation} \label{1107142211}
\max_{B\in \widetilde{B}} \max_{f\in L} \left(\sup_{x\in B} \mu_x (f)- \inf_{x\in B} \mu_x (f)\right)< \frac{\delta}{8},
\end{equation}
\begin{equation} \label{1107152016}
\sum_{B\in \widetilde{B}} \mu (B) \inf_{x\in B} h_{\mu_x}^a (G, \mathcal{U})\ge \int_X h_{\mu_x}^a (G, \mathcal{U}) d \mu (x)- \kappa.
\end{equation}

Denote by $\mathcal{B}$ the set of all atoms of $\widetilde{B}$ with positive $\mu$-measure and set
$\tau= \frac{1}{2} \min\limits_{B\in \mathcal{B}} \mu (B)> 0$.
Observe that for each $x\in X$, as $F'\in \mathcal{F}_G$ becomes more and more two-sided invariant, $\frac{1}{|F'|} H_{\mu_x} (\mathcal{U}_{F'})$ converges to $h_{\mu_x}^a (G, \mathcal{U})$. In particular, once $F'\in \mathcal{F}_G$ is sufficiently two-sided invariant then $\mu (X (F'))\ge 1- \frac{\tau}{2}$, where
\begin{equation*} \label{1107160152}
X (F')= \left\{x\in X: \frac{1}{|F'|} H_{\mu_x} (\mathcal{U}_{F'})\ge h_{\mu_x}^a (G, \mathcal{U})- \kappa\right\}.
\end{equation*}
The measurability of $X (F')$ is easy to check, for example using \eqref{1107160141}.

By the mean ergodic theorem \cite[Theorem 2.1]{We} (see also \cite[Page 44]{MO}) for each $f\in L$, as $F'\in \mathcal{F}_G$ becomes more and more two-sided invariant, $\frac{1}{|F'|} \sum\limits_{s\in F'} f\circ s$ converges to $\E_\mu (f| \mathcal{B}_{X, G})$ in the sense of $L^2$, no matter if $G$ is infinite. In particular, once $F'\in \mathcal{F}_G$ is sufficiently two-sided invariant then there exists $W_{F'}\in \mathcal{B}_X$
with
\begin{equation*} \label{1107142245}
\mu (W_{F'})> 1- \tau \kappa\ \text{and}\ \sup_{x\in W_{F'}} \max_{f\in L} |\frac{1}{|F'|} \sum_{s\in F'} f (s x)- \mu_x (f)|< \frac{\delta}{8}.
\end{equation*}
For each $B\in \mathcal{B}$, as $B\in \mathcal{B}_{X, G}$, $\mu_x (B)= 1$ for $\mu$-a.e. $x\in B$ and
\begin{eqnarray*} \label{1107152154}
& & \hskip -26pt \mu (B)- \tau \kappa< \mu (W_{F'}\cap B)\nonumber \\
&= & \int_B \E_\mu (1_{W_{F'}}| \mathcal{B}_{X, G}) (x) d \mu (x)= \int_B \mu_x (W_{F'}) d \mu (x)\nonumber \\
&\le & (1- \kappa) \mu (\{x\in B: \mu_x (W_{F'})\le 1- \kappa\})+ \mu (\{x\in B: \mu_x (W_{F'})> 1- \kappa\})\nonumber \\
&= & (1- \kappa) \mu (B)+ \kappa \mu (\{x\in B: \mu_x (W_{F'})> 1- \kappa\}),
\end{eqnarray*}
by the selection of $\tau$, it is easy to check
$$\mu (\{x\in B: \mu_x (W_{F'})> 1- \kappa\})> \mu (B)- \tau\ge \tau,$$
 and so, there exists $x\in B\cap X (F')$ such that $\mu_x (W_{F'}\cap B)> 1- \kappa$.

Let $\delta'> 0$ such that the diameter of $X$ is at most $\sqrt{\frac{\delta^2}{2 \delta'}}$ and
\begin{equation} \label{1107150124}
\delta'< \tau,
\delta' |\mathcal{B}| \max\limits_{f\in L} ||f||< \frac{\delta}{4}, |\mathcal{B}| \delta' |\mathcal{U}|< \frac{\epsilon}{4}.
\end{equation}
Let $M\in \N$ be large enough so that $\frac{|\mathcal{B}|}{M}< \delta'$.
Let $\delta''> 0$ such that $2 \delta''< \delta'$ and
\begin{equation} \label{1107150158}
4 \delta'' \max\limits_{f\in L} ||f||< \frac{\delta}{2}, 2 \delta'' |\mathcal{U}|+ \kappa< \frac{\epsilon}{4}.
\end{equation}

By the above discussions and Lemma \ref{1107112242}, there are an $l\in \N$ and $F_1, \cdots, F_l\in \mathcal{F}_G$ which are sufficiently two-sided invariant so that
\begin{equation} \label{1107142353}
\min_{k= 1}^l \min_{s\in F} \frac{|s^{- 1} F_k\cap F_k|}{|F_k|}\ge 1- \delta'
\end{equation}
and
for each $k= 1, \cdots, l$ there exists $W_{F_k}\in \mathcal{B}_X$ and $x (k, B)\in B\cap X (F_k)$ satisfying
\begin{equation} \label{1107142332}
\mu_{x (k, B)} (W_{F_k}\cap B)> 1- \kappa
\end{equation}
and
\begin{equation} \label{1107142329}
\max_{k= 1}^l \sup_{x\in W_{F_k}} \max_{f\in L} |\frac{1}{|F_k|} \sum_{s\in F_k} f (s x)- \mu_x (f)|< \frac{\delta}{8},
\end{equation}
additionally, if $G$ is infinite then we also require
\begin{equation} \label{1107160241}
\max_{k= 1}^l \frac{\log 2}{|F_k|}< \kappa,
\end{equation}
such that once $\sigma: G\rightarrow Sym (d)$ is a good enough sofic approximation for $G$ with some $d\in \N$ then there exist $C_1, \cdots, C_l\subseteq \{1, \cdots, d\}$ satisfying
\begin{enumerate}

\item the sets $\sigma (F_k) C_k, k\in \{1, \cdots, l\}$ are pairwise disjoint;

\item $\{\sigma (F_k) C_k: k\in \{1, \cdots, l\}\}$ $(1- \delta'')$-covers $\{1, \cdots, d\}$;

\item for every $k\in \{1, \cdots, l\}$, the map $F_k\times C_k\ni (s, c)\mapsto \sigma_s (c)$ is bijective; and

\item for all $k\in \{1, \cdots, l\}$ and $s\in F, s_k\in F_k, c_k\in C_k$, $\sigma_{s s_k} (c_k)= \sigma_s \sigma_{s_k} (c_k)$.
\end{enumerate}
The existence of such subsets $F_1, \cdots, F_l$ is ensured by the amenability of $G$.
For each $k= 1, \cdots, l$ and any $B\in \mathcal{B}$, as $x (k, B)\in B\cap X (F_k)$, one has
\begin{equation} \label{1107160247}
\frac{1}{|F_k|} H_{\mu_{x (k, B)}} (\mathcal{U}_{F_k})\ge \inf_{x\in B} h_{\mu_x}^a (G, \mathcal{U})- \kappa.
\end{equation}

Now assume that $\sigma: G\rightarrow Sym (d)$ is a good enough sofic approximation for $G$ with some $d\in \N$, and so $d$ is large enough satisfying
\begin{equation} \label{1107150005}
M \sum_{k= 1}^l |F_k|\le \delta'' d,
\end{equation}
and let $C_1, \cdots, C_l\subseteq \{1, \cdots, d\}$ be constructed as above. Set $\Lambda= \{k\in \{1, \cdots, l\}: |C_k|\ge M\}$ and $D= \cup \{\sigma (F_k) C_k: k\in \Lambda\}$. Using \eqref{1107150005} one has $|D|\ge (1- 2 \delta'') d$. Moreover, by the construction of $M\in \N$ for each $k\in \Lambda$ there exists a partition $\{C_{k, B}: B\in \mathcal{B}\}$ of $C_k$ such that
\begin{equation} \label{1107142340}
\max_{k\in \Lambda} \max_{B\in \mathcal{B}} |\frac{|C_{k, B}|}{|C_k|}- \mu (B)|< \delta'.
\end{equation}

For each $k\in \Lambda$ let $y_k\in \prod\limits_{B\in \mathcal{B}} (W_{F_k}\cap B)^{C_{k, B}}$. From the construction of $C_1, \cdots, C_l$ it is not hard to see that there exists at least one point $(x_1, \cdots, x_d)\in X^d$ such that once $i\in \sigma (F_k) C_{k, B}$ for some $k\in \Lambda$ and $B\in \mathcal{B}$, say $i= \sigma_{s_k} (c_{k, B})$ with $s_k\in F_k$ and $c_{k, B}\in C_{k, B}$, then $x_i= s_k y_k (c_{k, B})$. Let $(x_1, \cdots, x_d)\in X^d$ be such a point.

Let $f\in L$. Let $k\in \Lambda$ and $B\in \mathcal{B}$. As $B\in \mathcal{B}_{X, G}$,
\begin{equation} \label{1107150044}
\int_B f d \mu= \int_B \E_\mu (f| \mathcal{B}_{X, G}) d \mu= \int_B \mu_x (f) d \mu (x).
\end{equation}
For each $c_{k, B}\in C_{k, B}$, observe $y_k (c_{k, B})\in W_{F_k}\cap B$, one has
\begin{eqnarray*} \label{1107150053}
& & |\frac{1}{|F_k|} \sum_{s_k\in F_k} f (x_{\sigma_{s_k} (c_{k, B})})- \frac{1}{\mu (B)} \int_B f d \mu|\nonumber \\
&\le & |\frac{1}{|F_k|} \sum_{s_k\in F_k} f (s_k y_k (c_{k, B}))- \mu_{y_k (c_{k, B})} (f)|+ |\mu_{y_k (c_{k, B})} (f)- \frac{1}{\mu (B)} \int_B f d \mu|\nonumber \\
&< & \frac{\delta}{8}+ \frac{\delta}{8}\ (\text{using \eqref{1107142211}, \eqref{1107142329} and \eqref{1107150044}})= \frac{\delta}{4}.
\end{eqnarray*}
Summing over all $c_{k, B}\in C_{k, B}$ we obtain
\begin{equation} \label{1107150102}
|\frac{1}{|\sigma (F_k) C_{k, B}|} \sum_{i\in \sigma (F_k) C_{k, B}} f (x_i) - \frac{1}{\mu (B)} \int_B f d \mu|< \frac{\delta}{4}.
\end{equation}
Thus
\begin{eqnarray*} \label{1107150111}
& & |\frac{1}{|\sigma (F_k) C_k|} \sum_{i\in \sigma (F_k) C_k} f (x_i)- \mu (f)|\nonumber \\
&\le & |\sum_{B\in \mathcal{B}} \frac{1}{|\sigma (F_k) C_k|} \sum_{i\in \sigma (F_k) C_{k, B}} f (x_i)- \sum_{B\in \mathcal{B}} \frac{|\sigma (F_k) C_{k, B}|}{|\sigma (F_k) C_k|}\cdot \frac{1}{\mu (B)} \int_B f d \mu|+ \nonumber \\
& & |\sum_{B\in \mathcal{B}} \frac{|\sigma (F_k) C_{k, B}|}{|\sigma (F_k) C_k|}\cdot \frac{1}{\mu (B)} \int_B f d \mu- \sum_{B\in \mathcal{B}} \mu (B) \cdot \frac{1}{\mu (B)} \int_B f d \mu|\nonumber \\
&< & \frac{\delta}{4}+ \sum_{B\in \mathcal{B}} \frac{\delta'}{\mu (B)} |\int_B f d \mu|\ (\text{using \eqref{1107142340} and \eqref{1107150102}})\nonumber \\
&\le & \frac{\delta}{4}+ \delta' |\mathcal{B}|\cdot ||f||< \frac{\delta}{2}\ (\text{using \eqref{1107150124}}).
\end{eqnarray*}
By the construction of $C_1, \cdots, C_l$, summing over all $k\in \Lambda$ we obtain
\begin{equation} \label{1107150128}
|\frac{1}{|D|} \sum_{i\in D} f (x_i)- \mu (f)|< \frac{\delta}{2},
\end{equation}
and hence
\begin{eqnarray} \label{1107150203}
\hskip 16pt & & |\frac{1}{d} \sum_{i= 1}^d f (x_i)- \mu (f)|\nonumber \\
\hskip 16pt &\le & |\frac{1}{d} \sum_{i\in \{1, \cdots, d\}\setminus D} f (x_i)|+ \left(\frac{1}{|D|}- \frac{1}{d}\right)|\sum_{i\in D} f (x_i)|+ |\frac{1}{|D|} \sum_{i\in D} f (x_i)- \mu (f)|\nonumber \\
\hskip 16pt &\le & ||f|| \left(\frac{|\{1, \cdots, d\}\setminus D|}{d}+ \frac{d- |D|}{d}\right)+ \frac{\delta}{2}\ (\text{using \eqref{1107150128}})\nonumber \\
\hskip 16pt &\le & 4 \delta'' ||f||+ \frac{\delta}{2}\ (\text{as}\ |D|\ge (1- 2 \delta'') d)< \delta\ (\text{using \eqref{1107150158}}).
\end{eqnarray}

Let $s\in F$ and $i\in \{1, \cdots, d\}$. Once $i= \sigma_{s_k} (c_{k, B})$ with some $s_k\in F_k$ and $c_{k, B}\in C_{k, B}, k\in \Lambda, B\in \mathcal{B}$, if $s s_k\in F_k$ then $s x_i= s s_k y_k (c_{k, B})= x_{\sigma_{s s_k} (c_{k, B})}= x_{\sigma_s (i)}$. That is, $s x_i= x_{\sigma_s (i)}$ for each $i\in E$, where
$$E= \bigcup_{k\in \Lambda} \bigcup_{B\in \mathcal{B}} \sigma (s^{- 1} F_k\cap F_k) C_{k, B}= \bigcup_{k\in \Lambda} \sigma (s^{- 1} F_k\cap F_k) C_k.$$
Then by the construction of $C_1, \cdots, C_l$ one has
\begin{eqnarray} \label{1107150007}
|E|&= & \sum_{k\in \Lambda} |s^{- 1} F_k\cap F_k|\cdot |C_k|\ge (1- \delta') \sum_{k\in \Lambda} |F_k|\cdot |C_k|\ (\text{using \eqref{1107142353}})\nonumber \\
&\ge & (1- \delta') \sum_{k= 1}^l |F_k|\cdot |C_k|- d \delta''\ (\text{using \eqref{1107150005}})\nonumber \\
&\ge & d ((1- \delta') (1- \delta'')- \delta'')\ge d (1- \delta'- 2 \delta'').
\end{eqnarray}
Moreover, by the selection of $\delta'$ and $\delta''$ one has
\begin{eqnarray*} \label{1107150014}
\frac{1}{d} \sum_{i= 1}^d \rho^2 (s x_i, x_{\sigma_s (i)})&= & \frac{1}{d} \sum_{i\in \{1, \cdots, d\}\setminus E} \rho^2 (s x_i, x_{\sigma_s (i)})\nonumber \\
&\le & \frac{1}{d} |\{1, \cdots, d\}\setminus E|\cdot \frac{\delta^2}{2 \delta'}\nonumber \\
&\le & (\delta'+ 2 \delta'')\cdot \frac{\delta^2}{2 \delta'}\ (\text{using \eqref{1107150007}})< \delta^2.
\end{eqnarray*}
Combined with \eqref{1107150203}, we obtain $(x_1, \cdots, x_d)\in X^d_{F, \delta, \sigma, \mu, L}$.

Now, if $(x_1, \cdots, x_d)\in U_1\times \cdots\times U_d$ for some $U_1, \cdots, U_d\in \mathcal{U}$. For each $k\in \Lambda$ and any $s_k\in F_k, c_{k, B}\in C_{k, B}$ with $B\in \mathcal{B}$, $y_k (c_{k, B})= s_k^{- 1} x_{\sigma_{s_k} (c_{k, B})}\in s_k^{- 1} U_{\sigma_{s_k} (c_{k, B})}$, and so $y_k (c_{k, B})$ is contained in the element of $\bigcap\limits_{s_k\in F_k} s_k^{- 1} U_{\sigma_{s_k} (c_{k, B})}$ of $\mathcal{U}_{F_k}$. Thus, $\prod\limits_{k\in \Lambda} y_k$ is contained in the element $\prod\limits_{k\in \Lambda} \prod\limits_{B\in \mathcal{B}} \prod\limits_{c_{k, B}\in C_{k, B}} \bigcap\limits_{s_k\in F_k} s_k^{- 1} U_{\sigma_{s_k} (c_{k, B})}$ of $\prod\limits_{k\in \Lambda} \prod\limits_{B\in \mathcal{B}} (\mathcal{U}_{F_k})^{C_{k, B}}$. From this, we obtain readily
\begin{eqnarray} \label{1107150240}
\log N (\mathcal{U}^d, X^d_{F, \delta, \sigma, \mu, L})&\ge & \log N \left(\prod\limits_{k\in \Lambda} \prod_{B\in \mathcal{B}} (\mathcal{U}_{F_k})^{C_{k, B}}, \prod_{k\in \Lambda} \prod\limits_{B\in \mathcal{B}} (W_{F_k}\cap B)^{C_{k, B}}\right)\nonumber \\
&= & \sum_{k\in \Lambda} \sum_{B\in \mathcal{B}} |C_{k, B}| \log N (\mathcal{U}_{F_k}, W_{F_k}\cap B)\nonumber \\
&\ge & \sum_{k\in \Lambda} \sum_{B\in \mathcal{B}} |C_{k, B}| \log b_{\mu_{x (k, B)}} (F_k, 1- \kappa, \mathcal{U})\ (\text{using \eqref{1107142332}}).
\end{eqnarray}

If $G$ is finite. Observe that if $x\in X$ say $G x= \{x_1, \cdots, x_p\}$ with $p\le |G|$ then
$\frac{1}{p} \sum\limits_{q= 1}^p \delta_{x_q}\in \mathcal{M}^e (X, G)$;
in fact, each element of $\mathcal{M}^e (X, G)$ must be of such form. As $\kappa\le \frac{1}{2 |G|}$,
for $k\in \Lambda$ and $B\in \mathcal{B}$, once a Borel measurable subset has $\mu_{x (k, B)}$-measure at least $1- \kappa$ then it has $\mu_{x (k, B)}$-measure 1, which implies readily $H_{\mu_{x (k, B)}} (\mathcal{U}_{F_k})\le \log b_{\mu_{x (k, B)}} (F_k, 1- \kappa, \mathcal{U})$, using \eqref{1107160247} and \eqref{1107150240} one has
\begin{equation} \label{1107160256}
\log N (\mathcal{U}^d, X^d_{F, \delta, \sigma, \mu, L})\ge \sum_{k\in \Lambda} \sum_{B\in \mathcal{B}} |C_{k, B}|\cdot |F_k| \left(\inf_{x\in B} h^a_{\mu_x} (G, \mathcal{U})- \kappa\right).
\end{equation}

If $G$ is infinite. Using Lemma \ref{1107142100} and \eqref{1107160241}, \eqref{1107160247}, \eqref{1107150240} we obtain
\begin{eqnarray} \label{1107160257}
\log N (\mathcal{U}^d, X^d_{F, \delta, \sigma, \mu, L})&\ge & \sum_{k\in \Lambda} \sum_{B\in \mathcal{B}} |C_{k, B}| (H_{\mu_{x (k, B)}} (\mathcal{U}_{F_k})- \kappa |F_k|\cdot |\mathcal{U}|- \log 2)\nonumber \\
&\ge & \sum_{k\in \Lambda} \sum_{B\in \mathcal{B}} |C_{k, B}|\cdot |F_k| \left(\inf_{x\in B} h^a_{\mu_x} (G, \mathcal{U})- 2 \kappa- \kappa |\mathcal{U}|\right).
\end{eqnarray}

As $\kappa (2+ |\mathcal{U}|)\le \frac{\epsilon}{2}$, combining \eqref{1107160256} and \eqref{1107160257} one has (no matter if $G$ is finite)
\begin{eqnarray*}
& & \log N (\mathcal{U}^d, X^d_{F, \delta, \sigma, \mu, L}) \\
&\ge & \sum_{k\in \Lambda} \sum_{B\in \mathcal{B}} |C_{k, B}|\cdot |F_k| \left(\inf_{x\in B} h^a_{\mu_x} (G, \mathcal{U})- \frac{\epsilon}{2}\right) \\
&\ge & \sum_{k\in \Lambda} \sum_{B\in \mathcal{B}} |C_{k, B}|\cdot |F_k| \inf_{x\in B} h^a_{\mu_x} (G, \mathcal{U})- \frac{d \epsilon}{2} \\
&\ge & \sum_{k\in \Lambda} |C_k|\cdot |F_k| \sum_{B\in \mathcal{B}} (\mu (B)- \delta') \inf_{x\in B} h^a_{\mu_x} (G, \mathcal{U})- \frac{d \epsilon}{2}\ (\text{using \eqref{1107142340}}) \\
&\ge & d (1- 2 \delta'') \left(\int_X h_{\mu_x}^a (G, \mathcal{U}) d \mu (x)- \kappa\right)- d |\mathcal{B}| \delta' |\mathcal{U}|- \frac{d \epsilon}{2} \\
& & (\text{using \eqref{1107152016} and the fact of}\ |D|\ge (1- 2 \delta'') d) \\
&\ge & d \int_X h_{\mu_x}^a (G, \mathcal{U}) d \mu (x)- d \left(\kappa+ 2 \delta'' |\mathcal{U}|+ |\mathcal{B}| \delta' |\mathcal{U}|+ \frac{\epsilon}{2}\right) \\
&\ge & d \int_X h_{\mu_x}^a (G, \mathcal{U}) d \mu (x)- d \epsilon\ (\text{by the selection of $\delta', \delta''$}).
\end{eqnarray*}
The conclusion follows easily from the above estimation.
\end{proof}

Using \eqref{1107142149} and \cite[Lemma 6.1]{KLAJM}, following the proof of \cite[Proposition 4.18]{HYZ} we may obtain the following result.

\begin{lem} \label{1107211626}
Let $\mu\in \mathcal{M} (X, G), \mathcal{U}\in \mathcal{C}_X$ and $\epsilon> 0, 0< a< 1$. Assume that $G$ is infinite. Then, once $F\in \mathcal{F}_G$ is sufficiently left invariant,
$$\frac{1}{|F|} \log b_\mu (F, a, \mathcal{U})\le h_\mu^a (G, \mathcal{U})+ \epsilon.$$
\end{lem}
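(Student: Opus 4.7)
The strategy is to reduce the statement to the ergodic case via the ergodic decomposition formula \eqref{1107142149} (which is precisely where the hypothesis that $G$ is infinite enters) and then invoke the ergodic analogue \cite[Lemma 6.1]{KLAJM} on each ergodic component, following the template of \cite[Proposition 4.18]{HYZ}. First I would rewrite
$$h_\mu^a(G,\mathcal{U})=\int_X h_{\mu_x}^a(G,\mathcal{U})\,d\mu(x).$$
The integrand is bounded by $\log N(\mathcal{U},X)$, Borel measurable (Lemma \ref{1107151941}), and $G$-invariant since $\mu_{sx}=\mu_x$; hence it is $\mathcal{B}_{X,G}$-measurable. I would approximate it from above by a simple step function: find a finite partition $\widetilde B=\{B_1,\dots,B_k\}\subseteq\mathcal{B}_{X,G}$ of $X$ and constants $c_i\ge 0$ such that on each $B_i$ with $\mu(B_i)>0$ we have $h_{\mu_x}^a(G,\mathcal{U})\le c_i+\epsilon/4$, while $\sum_i\mu(B_i)c_i\le h_\mu^a(G,\mathcal{U})+\epsilon/4$.

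Next, I would apply \cite[Lemma 6.1]{KLAJM} on each atom $B_i$: for $\mu$-a.e.\ $x\in B_i$ the measure $\mu_x$ is ergodic with entropy at most $c_i+\epsilon/4$, so the ergodic statement provides, for sufficiently left-invariant $F$, a sub-family of $\mathcal{U}_F$ of cardinality at most $\exp(|F|(c_i+\epsilon/2))$ covering $\mu_x$-mass at least $1-\kappa_i$ for a prescribed $\kappa_i>0$. An Egorov-type uniformization across $B_i$ (possible since the convergence in \cite[Lemma 6.1]{KLAJM} is pointwise on a full-measure set, and all quantities are Borel in $\mu_x$) then allows one to fix a single Fölner set $F$ and representatives $x_i\in B_i$ giving families $\mathcal{C}_i\subseteq\mathcal{U}_F$ with these properties. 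Choosing the thresholds $\kappa_i$ so that $\sum_i\mu(B_i)(1-\kappa_i)\ge a$ and setting $\mathcal{C}=\bigcup_i\mathcal{C}_i$, the mass bound $\mu(\bigcup\mathcal{C})\ge a$ follows from $\mu_{x_i}(B_i)=1$ (since $B_i\in\mathcal{B}_{X,G}$ and $x_i\in B_i$) together with $\mu(B_i\cap\bigcup\mathcal{C}_i)=\int_{B_i}\mu_x(\bigcup\mathcal{C}_i)\,d\mu(x)$ via the ergodic decomposition.

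The main obstacle is the cardinality estimate $\frac{1}{|F|}\log|\mathcal{C}|\le h_\mu^a(G,\mathcal{U})+\epsilon$. A crude union bound $|\mathcal{C}|\le\sum_i|\mathcal{C}_i|\le k\cdot\exp(|F|(\max_i c_i+\epsilon/2))$ yields only a rate of $\max_i c_i$, whereas what is required is the $\mu$-weighted average $\sum_i\mu(B_i)c_i$. Overcoming this is the delicate step: one has to balance the coverage budgets $\kappa_i$ so that the components with large $c_i$ contribute proportionally fewer atoms (one covers less of the high-entropy components while still achieving total mass $\ge a$), and one converts the resulting static Shannon bounds into counts via Lemma \ref{1107142100} combined with the affinity of the static entropy functional across $\widetilde B$. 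This is the technical bookkeeping in \cite[Proposition 4.18]{HYZ}, which I would transcribe to the present cover-theoretic setting, producing the required averaged estimate after $|F|$ is taken sufficiently left invariant and $k=|\widetilde B|$ is absorbed into the $\epsilon$ margin.
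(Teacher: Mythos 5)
Your overall route --- the ergodic decomposition \eqref{1107142149}, a finite $\mathcal{B}_{X,G}$-measurable partition on which $x\mapsto h^a_{\mu_x}(G,\mathcal{U})$ is nearly constant, the ergodic covering statement of \cite[Lemma 6.1]{KLAJM}/\cite[Proposition 4.18]{HYZ} on each atom, and an Egorov-type uniformization --- is exactly the route the paper indicates (the paper gives no details, deferring to those references). The trouble lies precisely in the step you flag as ``the delicate step'': it is not delicate, it is unclosable. If an atom $B_i$ has $\mu(B_i)>1-a$, then \emph{every} $\mathcal{C}\subseteq\mathcal{U}_F$ with $\mu(\bigcup\mathcal{C})\ge a$ must cover a fraction of $\mu_{B_i}$ bounded below by $1-\frac{1-a}{\mu(B_i)}>0$, and this already forces $|\mathcal{C}|\ge e^{|F|(c_i-o(1))}$ when the component entropy on $B_i$ is $c_i$; so ``covering less of the high-entropy components'' cannot push the exponential rate below $\max\{c_i:\mu(B_i)>1-a\}$, which in general exceeds the $\mu$-weighted average. (Also, Lemma \ref{1107142100} runs in the wrong direction for your purpose: it converts a cardinality bound into an entropy bound, i.e.\ it bounds $b_\mu$ from \emph{below}, and cannot produce the upper bound on $|\mathcal{C}|$ you need.) In fact the statement with $\int_X h^a_{\mu_x}(G,\mathcal{U})\,d\mu(x)$ on the right-hand side is false: take $G=\Z$, $X=\{1,\dots,N\}^{\Z}$ with the shift, $\mathcal{U}$ the time-zero clopen partition, $\mu=\frac12\delta_{x_0}+\frac12\nu$ where $x_0$ is the fixed point $(\dots,1,1,1,\dots)$ and $\nu$ is Bernoulli of entropy $\frac{4}{5}\log N$, and $a=\frac{9}{10}$. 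Then $h^a_\mu(G,\mathcal{U})=\frac{2}{5}\log N$, while any $\mathcal{C}\subseteq\mathcal{U}_F$ with $\mu(\bigcup\mathcal{C})\ge\frac{9}{10}$ must satisfy $\nu(\bigcup\mathcal{C})\ge\frac{4}{5}$, whence $|\mathcal{C}|\ge e^{|F|(\frac{4}{5}\log N-o(1))}$ by the Shannon--McMillan theorem for $\nu$. So no bookkeeping will rescue the argument as stated.

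What does survive --- and is all that is actually invoked in the proof of Lemma \ref{1107131627} --- is the version of the inequality with $\mathrm{ess\,sup}_x\,h^a_{\mu_x}(G,\mathcal{U})$ in place of the integral; equivalently, the stated inequality for the normalized restrictions $\mu_R$ to atoms $R\in\mathcal{B}_{X,G}$ on which $x\mapsto h^a_{\mu_x}(G,\mathcal{U})$ oscillates by less than $\kappa$, as arranged there by \eqref{1107211718}. For that version your plan goes through essentially verbatim and the balancing problem disappears: each atom is covered up to relative mass $1-\kappa$ by at most $e^{|F|(\mathrm{ess\,sup}_x h^a_{\mu_x}(G,\mathcal{U})+\epsilon/2)}$ elements of $\mathcal{U}_F$, and the factor $|\widetilde B|$ coming from the union bound is absorbed into the $\epsilon$ margin once $|F|$ is large. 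I would either prove that weaker statement, or add to the hypotheses that the fibre entropies are $\mu$-essentially within $\epsilon$ of their mean.
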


Remark that, \cite[Proposition 4.18]{HYZ} considered the case of $\mu\in \mathcal{M}^e (X, G)$, and with the help of \eqref{1107142149} and \cite[Lemma 6.1]{KLAJM} it can be generalized to all $\mu\in \mathcal{M} (X, G)$ almost by the same proof of \cite[Proposition 4.18]{HYZ}.

Similar to Lemma \ref{1107122331}, we could prove:

\begin{lem} \label{1107222344}
Let $F\in \mathcal{F}_G$ and $\mathcal{U}\in \mathcal{C}_X^o$. Assume that $\emptyset\neq K_F\subseteq X$ is a closed subset and $\mathcal{V} (F)\subseteq \mathcal{U}_F$ satisfies $\cup \mathcal{V} (F)\supseteq K_F$. Then there exists $\delta> 0$ such that
$$K_{F, \delta}= \left\{(x_s)_{s\in F}\in X^F: \max_{s\in F} \rho (x_s, s x)< \delta\ \text{for some}\ x\in K_F\right\}$$
can be covered by at most $|\mathcal{V} (F)|$ elements of $\mathcal{U}^F$.
\end{lem}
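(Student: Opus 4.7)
The plan is to imitate the proof of Lemma \ref{1107122331} almost verbatim, replacing the ambient space $X$ there with the closed subset $K_F$ here, and replacing the optimal subcover $\mathcal{W} \subseteq \mathcal{U}_F$ of $X$ with the already-given family $\mathcal{V}(F) \subseteq \mathcal{U}_F$ covering $K_F$. The only extra input needed is that $K_F^{*} := \{(sx)_{s \in F} : x \in K_F\}$ is a closed (hence compact) subset of $X^F$, which is immediate since $K_F$ is closed in the compact space $X$ and each $s \in F$ acts continuously on $X$, so the map $X \to X^F$, $x \mapsto (sx)_{s \in F}$, is continuous.

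First, convert $\mathcal{V}(F)$ into a family in $\mathcal{U}^F$. For every $W \in \mathcal{V}(F) \subseteq \mathcal{U}_F$, fix a representation $W = \bigcap_{s \in F} s^{-1} U_W(s)$ with $U_W(s) \in \mathcal{U}$ for each $s \in F$, and set $\widehat{W} = \prod_{s \in F} U_W(s) \in \mathcal{U}^F$. Put $\widehat{\mathcal{V}}(F) = \{\widehat{W} : W \in \mathcal{V}(F)\}$, so $|\widehat{\mathcal{V}}(F)| \le |\mathcal{V}(F)|$. Second, check that $\cup \widehat{\mathcal{V}}(F) \supseteq K_F^{*}$: if $x \in K_F$ then $x \in W$ for some $W \in \mathcal{V}(F)$, hence $sx \in U_W(s)$ for every $s \in F$, so $(sx)_{s \in F} \in \widehat{W} \subseteq \cup \widehat{\mathcal{V}}(F)$.

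Finally, since $\cup \widehat{\mathcal{V}}(F)$ is an open subset of $X^F$ containing the compact set $K_F^{*}$, by a standard compactness / uniform open-cover argument there exists $\delta > 0$ such that the $\delta$-neighborhood of $K_F^{*}$ with respect to the metric $(x_s)_{s \in F}, (y_s)_{s \in F} \mapsto \max_{s \in F} \rho(x_s, y_s)$ on $X^F$ is contained in $\cup \widehat{\mathcal{V}}(F)$. By the definition in the statement of the lemma, this neighborhood is exactly $K_{F, \delta}$, which therefore admits a covering by the at most $|\mathcal{V}(F)|$ elements of $\widehat{\mathcal{V}}(F) \subseteq \mathcal{U}^F$. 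There is no genuine obstacle in this proof; the entire content is the open-cover / compactness observation, just as in Lemma \ref{1107122331}, and the only minor bookkeeping is the explicit passage from elements of $\mathcal{U}_F$ to elements of $\mathcal{U}^F$ via the product construction $W \mapsto \widehat{W}$.
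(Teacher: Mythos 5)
Your proof is correct and is exactly the argument the paper intends: it adapts the proof of Lemma \ref{1107122331} verbatim, replacing $X$ by the closed (hence compact) set $K_F$ and the optimal subcover $\mathcal{W}\subseteq \mathcal{U}_F$ by the given family $\mathcal{V}(F)$, then passes to $\mathcal{U}^F$ via $W\mapsto \widehat{W}$ and applies the same compactness/openness observation to extract $\delta$. Nothing further is needed.
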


Now following the ideas of Lemma \ref{1107120120} and \cite[Lemma 6.3]{KLAJM}, let us prove:

\begin{lem} \label{1107131627}
Let $\mathcal{U}\in \mathcal{C}_X^o$ and $\mu\in \mathcal{M} (X, G), \kappa> 0$.
Assume that $G$ is infinite. Then there exist $F\in \mathcal{F}_G, \delta> 0$ and $L\in \mathcal{F}_{C (X)}$ such that
$$h_{F, \delta, \mu, L} (G, \mathcal{U})\le \int_X h^a_{\mu_x} (G, \mathcal{U}) d \mu (x)+ 6 \kappa.$$
\end{lem}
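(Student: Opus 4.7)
The plan is to establish the upper bound complementary to Lemma~\ref{1107131641}, adapting the topological argument of Lemma~\ref{1107120120} to the measure-theoretic setting in the spirit of \cite[Lemma~6.3]{KLAJM}. Fix $\kappa > 0$.

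First, I use the ergodic decomposition to reduce to a finite combinatorial situation. With the help of Lemma~\ref{1107151941} and the $\mathcal{B}_{X,G}$-measurability of $x\mapsto \mu_x(f)$ for $f\in C(X)$, I pick a finite partition $\widetilde{\mathcal{B}}\subseteq \mathcal{B}_{X,G}$ of $X$ on each atom $B$ of positive $\mu$-measure of which both $h^a_{\mu_x}(G,\mathcal{U})$ and the values $\mu_x(f)$ (for finitely many $f$ to be selected below) vary by at most $\kappa$, and such that $\sum_{B}\mu(B)\sup_{x\in B}h^a_{\mu_x}(G,\mathcal{U}) \le \int h^a_{\mu_x}(G,\mathcal{U})\,d\mu(x)+\kappa$. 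Pick a representative $x(B)\in B$ in each atom. By Lemma~\ref{1107211626} applied to the finitely many $\mu_{x(B)}$, there exist a F\o lner set $F_0\in\mathcal{F}_G$ and a small $\tau>0$ with
\[
\frac{1}{|F_0|}\log b_{\mu_{x(B)}}(F_0,1-\tau,\mathcal{U}) \le h^a_{\mu_{x(B)}}(G,\mathcal{U})+\kappa
\]
for every atom $B$. For each $B$, pick $\mathcal{C}(B)\subseteq\mathcal{U}_{F_0}$ realizing this bound with $\mu_{x(B)}(\cup\mathcal{C}(B))\ge 1-\tau$, then by regularity select a compact $K(B)\subseteq\cup\mathcal{C}(B)$ with $\mu_{x(B)}(K(B))\ge 1-2\tau$, and apply Lemma~\ref{1107222344} to obtain $\delta_0>0$ so that the thickening $K(B)_{F_0,\delta_0}$ is covered by $|\mathcal{C}(B)|$ elements of $\mathcal{U}^{F_0}$. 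Take $F=F_0$, take $L\in\mathcal{F}_{C(X)}$ large enough both to fix the partition $\widetilde{\mathcal{B}}$ (as in the proof of Lemma~\ref{1107131641}) and to contain continuous separators $f_B$ with $\mathbf{1}_{K(B)}\le f_B\le \mathbf{1}_{K(B)_{F_0,\delta_0}}$, and take $\delta$ much smaller than $\delta_0$, $1/|F_0|$, and $\tau$.

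For a good enough sofic approximation $\sigma:G\to Sym(d)$ and $(x_1,\dots,x_d)\in X^d_{F,\delta,\sigma,\mu,L}$, the sofic Rokhlin Lemma~\ref{1107112242} (with F\o lner sets $F_1,\dots,F_l$ more two-sided invariant than $F_0$) produces pairwise disjoint tiles $\sigma(F_k)C_k$ covering all but an $\eta$-fraction of $\{1,\dots,d\}$. Using the $L$-constraints $\frac{1}{d}\sum_i f_B(x_i)\approx \mu(f_B)$ together with the Rokhlin bijectivity $F_k\times C_k\to \sigma(F_k)C_k$, I split $C_k=\bigsqcup_B C_{k,B}$ (up to a small exceptional piece) with $|C_{k,B}|/|C_k|\approx \mu(B)$, arranged so that for $c\in C_{k,B}$ the orbit segment $(x_{\sigma_s(c)})_{s\in F_k}$ lies in $K(B)_{F_k,\delta_0}$ and is thus located in one of $|\mathcal{C}(B)|$ elements of $\mathcal{U}^{F_k}$. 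The number of such partitions of $C_k$ is controlled by Lemma~\ref{1107182209}, the number of segment choices is at most $\prod_k\prod_B|\mathcal{C}(B)|^{|C_{k,B}|}$, and each remainder index contributes at most $|\mathcal{U}|$. Taking logs, invoking the $b_{\mu_{x(B)}}$-bound, and combining with $\sum_B\mu(B)h^a_{\mu_{x(B)}}(G,\mathcal{U})\le \int h^a_{\mu_x}(G,\mathcal{U})\,d\mu(x)+\kappa$, the Stirling, $\tau$, $\eta$, and remainder errors absorb into an aggregate of $6\kappa$, giving
\[
\frac{1}{d}\log N(\mathcal{U}^d, X^d_{F,\delta,\sigma,\mu,L}) \le \int_X h^a_{\mu_x}(G,\mathcal{U})\,d\mu(x)+6\kappa,
\]
and taking $\limsup$ over the sofic approximation sequence yields the lemma.

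The main obstacle is transferring the \emph{global} empirical constraints on $(x_1,\dots,x_d)$ imposed by $L$ into \emph{local} control on each $F_k$-orbit tile, so that the assignment $c\mapsto B(c)$ genuinely respects the ergodic decomposition on most blocks. This is where the infiniteness of $G$ enters: it forces $|F_k|\to\infty$, so the mean ergodic theorem applied to $f_B$ versus $\E_\mu(f_B\mid \mathcal{B}_{X,G})$ ensures that most $F_k$-orbit averages match the correct conditional atom, and the bijectivity clause of the sofic Rokhlin lemma promotes this into a legitimate partition of $C_k$. This delicate bookkeeping mirrors the corresponding step in \cite[Lemma~6.3]{KLAJM}.
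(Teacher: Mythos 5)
Your overall architecture (partition of $X$ into $\mathcal{B}_{X,G}$-atoms on which $h^a_{\mu_x}(G,\mathcal{U})$ is nearly constant, Lemma \ref{1107211626} to produce small coverings, a sofic Rokhlin tiling, and a covering count) is the right one and matches the paper's. But there is a genuine gap in how you localize the covering to the atoms. You apply Lemma \ref{1107211626} to the single ergodic components $\mu_{x(B)}$ at chosen representatives $x(B)\in B$, and you build $K(B)$ with $\mu_{x(B)}(K(B))\ge 1-2\tau$. An atom $B$ generally carries uncountably many ergodic components, and nothing forces the other components $\mu_y$, $y\in B$, to charge $K(B)$ at all; consequently $\mu(K(B))$ can be far smaller than $\mu(B)$, and the constraint $\frac1d\sum_i f_B(x_i)\approx\mu(f_B)$ that you place in $L$ is then nearly vacuous. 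A model point $(x_1,\dots,x_d)\in X^d_{F,\delta,\sigma,\mu,L}$ whose coordinates are modeled on components of $B$ other than $\mu_{x(B)}$ will have most of its $F_k$-orbit segments outside $K(B)_{F_k,\delta_0}$, so those tiles can only be covered by the trivial $|\mathcal{U}|^{|F_k|}$ bound and the estimate collapses. The paper avoids this by applying Lemma \ref{1107211626} to the $G$-invariant conditional measures $\mu_R(\cdot)=\mu(\cdot\cap R)/\mu(R)$ (the lemma is deliberately stated for non-ergodic invariant measures) and then using the ergodic decomposition formula \eqref{1107142149} to convert $h^a_{\mu_R}(G,\mathcal{U})$ into $\int_R h^a_{\mu_x}(G,\mathcal{U})\,d\mu_R(x)\le \xi_R$; the resulting sets $X_{R,F_k}$ have large measure for \emph{all} of $R$, not just one component.

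Two further points need attention. First, a type mismatch: you take $F=F_0$ and construct $\mathcal{C}(B)\subseteq\mathcal{U}_{F_0}$, but the Rokhlin tiles have shapes $F_1,\dots,F_l$ distinct from $F_0$, so an $F_k$-orbit segment cannot be covered by elements built from $\mathcal{C}(B)$; you must run Lemma \ref{1107211626} for each $F_k$ separately (as the paper does with $\mathcal{U}_{R,F_k}$). Second, your bookkeeping of the assignment $c\mapsto B(c)$ of tile centers via a Stirling count is plausible in principle (the centers number only about $d/\min_k|F_k|$), but it presupposes that each tile's orbit segment is recognized by a single atom; the paper instead fixes the region decomposition $\Omega_R$ of $\{1,\dots,d\}$ \emph{before} tiling, proves the $\Omega_R^*$ are approximately $\sigma(F_l)$-invariant (this is where the carefully separated open sets $U_R\supseteq F_lZ_R'$ and the functions $h_R$ enter), invokes \cite[Lemma 6.2]{KLAJM} to bound the number of such approximately invariant subsets by $e^{\kappa d}$, and only then applies the Rokhlin lemma (in the form of Lemma \ref{1107110016}) to each $\Theta_R$ separately. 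Some version of this invariance argument is needed to rule out tiles straddling two atoms; your proposal does not address it.
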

\begin{proof}
By Lemma \ref{1107151941}, there exists $\mathcal{R}'\in \mathcal{P}_X$ such that $\mathcal{R}'\subseteq \mathcal{B}_{X, G}$ and
\begin{equation} \label{1107211718}
\max_{R\in \mathcal{R}'} \left(\sup_{x\in R} h^a_{\mu_x} (G, \mathcal{U})- \inf_{x\in R} h^a_{\mu_x} (G, \mathcal{U})\right)< \kappa.
\end{equation}
Denote by $\mathcal{R}$ the set of all atoms from $\mathcal{R}'$ with positive $\mu$-measure. Thus
\begin{equation} \label{1107230111}
\int_X h^a_{\mu_x} (G, \mathcal{U}) d \mu (x)\ge \sum_{R\in \mathcal{R}} \mu (R) \xi_R- \kappa\ (\text{using \eqref{1107211718}}),
\end{equation}
where $\xi_R= \sup\limits_{x\in R} h^a_{\mu_x} (G, \mathcal{U})\le \log |\mathcal{U}|$ for each $R\in \mathcal{R}$.

Using \cite[Lemma 6.2]{KLAJM}, there exist $M'\in \N$ and $\omega: \N\rightarrow (0, 1)$ such that, for any $F'\in \mathcal{F}_G$ with $|F'|\ge M'$, once $\sigma: G\rightarrow Sym (d)$ is a good enough sofic approximation for $G$ with some $d\in \N$ then the number of $A\subseteq \{1, \cdots, d\}$ with $\max\limits_{s\in F'} |A\Delta \sigma_s (A)|\le \omega (|F'|) d$ is at most $\exp (\frac{\kappa d}{|\mathcal{R}|})$.

By Stirling's approximation formula,
there exists $\eta> 0$ small enough
such that
$$2 |\mathcal{R}| \eta \log |\mathcal{U}|\le \kappa, \frac{(\mu (R)+ 2 |\mathcal{R}| \eta)}{1- \eta}\le \frac{\mu (R) (\xi_R+ 2 \kappa)}{\xi_R+ \kappa}$$
 for each $R\in \mathcal{R}$ and, for every $R\in \mathcal{R}$ and any non-empty finite subset $\Upsilon$ the number of $\Upsilon'\subseteq \Upsilon$ with $\frac{|\Upsilon'|}{|\Upsilon|}\ge \frac{\mu (R)- \eta}{\mu (R)+ \eta}$ is at most $e^{\kappa |\Upsilon|}$.

By Lemma \ref{1107110016} there exist $l\in \N$ and $\eta'> 0$ such that in $\mathcal{F}_G$ once $e\in F_1\subseteq \cdots\subseteq F_l$ satisfies $|F_{k- 1}^{- 1} F_k\setminus F_k|\le \eta' |F_k|$ for all $k= 2, \cdots, l$, then for any good enough sofic approximation $\sigma: G\rightarrow Sym (d)$ for $G$ with some $d\in \N$ and every $Y_R\subseteq \{1, \cdots, d\}$ with $|Y_R|\ge d (\mu (R)- \eta)$ for all $R\in \mathcal{R}$, there exists, for every $R\in \mathcal{R}$, subsets $C_{R, 1}, \cdots, C_{R, l}\subseteq Y_R$ satisfying
\begin{enumerate}

\item the sets $\sigma (F_k) C_{R, k}, k\in \{1, \cdots, l\}$ are pairwise disjoint;

\item $\{\sigma (F_k) c: c\in C_{R, k}\}$ is $\eta$-disjoint for each $k= 1, \cdots, l$;

\item $\{\sigma (F_k) C_{R, k}: k\in \{1, \cdots, l\}\}$ $(\mu (R)- 2 \eta)$-covers $\{1, \cdots, d\}$; and

\item for every $k\in \{1, \cdots, l\}$ and $c\in C_{R, k}$, $F_k\ni s\mapsto \sigma_s (c)$ is bijective.
\end{enumerate}

Let $0< \tau< \frac{\eta}{4}$ satisfy $(1- 2 \tau) \mu (R)\ge \mu (R)- \frac{\eta}{2}$ for each $R\in \mathcal{R}$.
Observe that, for each $R\in \mathcal{R}$, $R\in \mathcal{B}_{X, G}$ and so $\mu_R\in \mathcal{M} (X, G)$ and $\mu_R$ is supported on $R$ where $\mu_R (\bullet)\doteq \frac{\mu (\bullet\cap R)}{\mu (R)}$.
By Lemma \ref{1107211626}, once $F'\in \mathcal{F}_G$ is sufficiently left invariant, for each $R\in \mathcal{R}$ there exist a Borel measurable subset $X_{R, F'}\subseteq R$ and $\mathcal{U}_{R, F'}\subseteq \mathcal{U}_{F'}$ such that $\mu_R (X_{R, F'})> 1- \frac{\tau}{l}, X_{R, F'}\subseteq \cup \mathcal{U}_{R, F'}$ and
\begin{equation} \label{1107212014}
\frac{1}{|F'|} \log |\mathcal{U}_{R, F'}|\le h^a_{\mu_R} (G, \mathcal{U})+ \kappa\le \xi_R+ \kappa\ (\text{applying \eqref{1107142149} to $\mu_R$}).
\end{equation}

Now in $\mathcal{F}_G$ we fix $e\in F_1\subseteq \cdots\subseteq F_l$ such that $|F_l|\ge M'$, $|F_{k- 1}^{- 1} F_k\setminus F_k|\le \eta' |F_k|$ for all $k= 2, \cdots, l$ and all $F_1, \cdots, F_l$ are sufficiently left invariant. For each $R\in \mathcal{R}$, set $X_R= \bigcap\limits_{k= 1}^l X_{R, F_k}$ and $V_R= \bigcap\limits_{k= 1}^l \cup \mathcal{U}_{R, F_k}$, then $\mu_R (X_R)> 1- \tau$ and $X_R\subseteq V_R$.

Let $\lambda> 0$ such that $\lambda< \min\{\frac{\eta}{8}, \frac{\omega (|F_l|)}{2 (2 |\mathcal{R}|+ 1)}\}$.
For each $R\in \mathcal{R}$, by the regularity of $\mu_R$, there exist closed subsets $Z_R$ and $Z_R'$ such that $Z_R\subseteq X_R\cap Z_R'\subseteq Z_R'\subseteq R, \mu_R (Z_R)> 1- \tau$ and $\mu_R (Z_R')> 1- \lambda$.
As $F_l Z_R'\subseteq R, R\in \mathcal{R}$ are pairwise disjoint, there exist pairwise disjoint open subsets $U_R\supseteq F_l Z_R'$ for each $R\in \mathcal{R}$. Let $R\in \mathcal{R}$. Recall that from the constructions $V_R$ is an open set. There exist open subsets $Z_R\subseteq B_R$ and $Z_R'\subseteq B_R'$ such that $\overline{B_R}\subseteq V_R, B_R\subseteq B_R'$ and $F_l B_R'\subseteq U_R$.
In $C (X)$ choose $0\le g_R\le h_R\le 1$ such that $g_R|_{Z_R}= 1, g_R|_{B_R^c}= 0$ and $h_R|_{Z_R'}= 1, h_R|_{(B_R')^c}= 0$.
Set $L= \{g_R, h_R: R\in \mathcal{R}\}\in \mathcal{F}_{C (X)}$.

Observe that for all $R\in \mathcal{R}$ and $k= 1, \cdots, l$, we could cover $\overline{B_R}$ by $\mathcal{U}_{R, F_k}$ (as $\overline{B_R}\subseteq V_R$).
By Lemma \ref{1107222344}, there is $\delta_2> 0$ small enough such that we could cover
$$\left\{(x_s)_{s\in F_k}: \max_{s\in F_k} \rho (x_s, s x)< \delta_2\ \text{for some}\ x\in \overline{B_R}\right\}$$
by at most $|\mathcal{U}_{R, F_k}|$ elements of $\mathcal{U}^{F_k}$ for all $R\in \mathcal{R}$ and $k= 1, \cdots, l$.
Moreover, we may select $\delta_4> 0$ small enough such that $\delta_4\le \delta_2$ and once $\rho (x', x'')\le \delta_4$ then
\begin{equation*} \label{1107212200}
\max_{R\in \mathcal{R}} \max_{s\in F_l} |h_R (s^{- 1} x')- h_R (s^{- 1} x'')|< \frac{1}{2}.
\end{equation*}

By the selection of $\tau$ and $\lambda$, there exists $\delta> 0$ small enough such that
$\delta\le \delta_4^2, |\mathcal{R}| \delta+ \lambda< \eta, (2 |\mathcal{R}|+ 1) \lambda+ |\mathcal{R}| (|\mathcal{R}|+ |F_l|) \delta\le \frac{\omega (|F_l|)}{2}$ and
$(1+ \lambda- 2 \tau) \mu (R)- 2 \lambda- (|\mathcal{R}|+ |F_l|) \delta\ge \mu (R)- \eta$ for all $R\in \mathcal{R}$.

Let $\sigma: G\rightarrow Sym (d)$ be a good enough sofic approximation for $G$ with some $d\in \N$ such that $|\Lambda|\ge d (1- \lambda)$, where $\Lambda= \{a\in \{1, \cdots, d\}: \sigma_e (a)= a\}$.

For each $(x_1, \cdots, x_d)\in X^d_{F_l, \delta, \sigma, \mu, L}$, let us consider $\Omega_R^*= \Lambda\cap \Lambda^*\cap \Omega_R''$ and $\Theta_R^*= \Lambda\cap \Lambda^*\cap \Theta_R''\subseteq \Omega_R^*$, where
$$\Lambda^*= \left\{a\in \{1, \cdots, d\}: \max\limits_{s\in F_l} \rho (x_{\sigma_s (a)}, s x_a)< \sqrt{\delta}\right\},$$
$$\Omega_R''= \left\{a\in \{1, \cdots, d\}: h_R (x_a)> \frac{1}{2}\right\}, \Omega_R'= \{a\in \{1, \cdots, d\}: h_R (x_a)> 0\},$$
$$\Theta_R''= \left\{a\in \{1, \cdots, d\}: g_R (x_a)> \frac{1}{2}\right\}, \Theta_R'= \{a\in \{1, \cdots, d\}: g_R (x_a)> 0\}.$$
Obviously, $|\Lambda^*|\ge d (1- |F_l| \delta)$. For each $a\in \Lambda^*$, as $\delta\le \delta_4^2$, one has
\begin{equation*} \label{1107212230}
\max_{R\in \mathcal{R}} \max_{s\in F_l} |h_R (x_a)- h_R (s^{- 1} x_{\sigma_s (a)})|< \frac{1}{2}.
\end{equation*}
Which implies $x_{\sigma_s (a)}\in s B_R'\subseteq U_R$ for all $a\in \Omega_R^*$ and $s\in F_l$, and so $\sigma (F_l) \Omega_R^*, R\in \mathcal{R}$ are pairwise disjoint (as $U_R, R\in \mathcal{R}$ are pairwise disjoint).

By the construction, $\Omega_R', R\in \mathcal{R}$ are pairwise disjoint. For every $R\in \mathcal{R}$, one has
\begin{equation} \label{1107212327}
\frac{|\Omega_R'|}{d}\ge \frac{1}{d} \sum_{a= 1}^d h_R (x_a)\ge \mu (h_R)- \delta\ge \mu (Z_R')- \delta\ge (1- \lambda) \mu (R)- \delta,
\end{equation}
which implies
\begin{eqnarray} \label{1107212328}
\frac{|\Omega_R^*|}{d}&\le & \frac{|\Omega_R'|}{d}\le 1- \sum_{R'\in \mathcal{R}\setminus \{R\}} \frac{|\Omega_{R'}'|}{d}\nonumber \\
&\le & (1- \lambda) \mu (R)+ |\mathcal{R}| \delta+ \lambda\ (\text{applying \eqref{1107212327} to each $R'$})\le \mu (R)+ \eta.
\end{eqnarray}
From the construction, it is easy to see
\begin{equation} \label{1107212344}
(1- \lambda) \mu (R)\le \mu (Z_R')\le \mu (h_R)\le \frac{|\Omega_R''|}{d}+ \frac{|\Omega_R'\setminus \Omega_R''|}{2 d}= \frac{|\Omega_R''|}{2 d}+ \frac{|\Omega_R'|}{2 d}.
\end{equation}
Combining \eqref{1107212328} and \eqref{1107212344} we obtain
$$\frac{|\Omega_R''|}{d}\ge (1- \lambda) \mu (R)- \lambda- |\mathcal{R}| \delta,$$
and so
\begin{equation} \label{1107221033}
\frac{|\Omega_R^*|}{d}\ge (1- \lambda) \mu (R)- 2 \lambda- (|\mathcal{R}|+ |F_l|) \delta.
\end{equation}
Observe $\sigma (F_l) \Omega_R^*\supseteq \sigma_e (\Omega_R^*)= \Omega_R^*, R\in \mathcal{R}$ are pairwise disjoint.
For every $R\in \mathcal{R}$, applying \eqref{1107221033} we obtain
\begin{equation} \label{1107221040}
\frac{|\sigma (F_l) \Omega_R^*\setminus \Omega_R^*|}{d}\le 1- \sum_{R'\in \mathcal{R}} \frac{|\Omega_{R'}^*|}{d}\le (2 |\mathcal{R}|+ 1) \lambda+ |\mathcal{R}| (|\mathcal{R}|+ |F_l|) \delta,
\end{equation}
and then
\begin{eqnarray} \label{1107221047}
\max_{s\in F_l} \frac{|\Omega_R^*\Delta \sigma_s (\Omega_R^*)|}{d}&= & \max_{s\in F_l} \left(\frac{|\Omega_R^*\setminus \sigma_s (\Omega_R^*)|}{d}+ \frac{|\sigma_s (\Omega_R^*)\setminus \Omega_R^*|}{d}\right)\nonumber \\
&= & 2 \max_{s\in F_l} \frac{|\sigma_s (\Omega_R^*)\setminus \Omega_R^*|}{d}\ (\text{observe $|\sigma_s (\Omega_R^*)|= |\Omega_R^*|$})\nonumber \\
&\le & 2 \frac{|\sigma (F_l) \Omega_R^*\setminus \Omega_R^*|}{d}\nonumber \\
&\le & 2 [(2 |\mathcal{R}|+ 1) \lambda+ |\mathcal{R}| (|\mathcal{R}|+ |F_l|) \delta]\ (\text{using \eqref{1107221040}})\le \omega (|F_l|).
\end{eqnarray}

Let $R\in \mathcal{R}$. If $a\in \Theta_R^*$ then $x_a\in B_R$. Similar to \eqref{1107212328} and \eqref{1107212344} we obtain:
\begin{equation*} \label{1107221110}
(1- \tau) \mu (R)- \delta\le \frac{|\Theta_R'|}{d}\le \frac{|\Omega_R'|}{d}\le (1- \lambda) \mu (R)+ \lambda+ |\mathcal{R}| \delta\ (\text{using \eqref{1107212328}})
\end{equation*}
and
\begin{equation*} \label{1107221115}
(1- \tau) \mu (R)\le \mu (g_R)\le \frac{|\Theta'|}{2 d}+ \frac{|\Theta_R''|}{2 d},
\end{equation*}
which implies (by the selection of $\lambda, \tau, \delta, \eta$)
\begin{eqnarray} \label{1107221117}
\frac{|\Theta_R^*|}{d}&\ge & \frac{|\Theta_R''|}{d}- \lambda- |F_l| \delta\nonumber \\
&\ge & (1+ \lambda- 2 \tau) \mu (R)- 2 \lambda- (|\mathcal{R}|+ |F_l|) \delta\ge \mu (R)- \eta.
\end{eqnarray}

Thus, by the constructions, using \eqref{1107221047} there exist $X^{d, 1}_{F_l, \delta, \sigma, \mu, L}\subseteq X^d_{F_l, \delta, \sigma, \mu, L}$ and disjoint subsets $\{\Omega_R: R\in \mathcal{R}\}$ of $\{1, \cdots, d\}$ such that for all $(x_1, \cdots, x_d)\in X^{d, 1}_{F_l, \delta, \sigma, \mu, L}$, $\Omega_R^*= \Omega_R$ for each $R\in \mathcal{R}$, and
\begin{equation} \label{1107221645}
e^{\kappa d}\cdot N (\mathcal{U}^d, X^{d, 1}_{F_l, \delta, \sigma, \mu, L})\ge N (\mathcal{U}^d, X^d_{F_l, \delta, \sigma, \mu, L}),
\end{equation}
and then using \eqref{1107212328} and \eqref{1107221117} by the selection of $\eta$ there exist $X^{d, 2}_{F_l, \delta, \sigma, \mu, L}\subseteq X^{d, 1}_{F_l, \delta, \sigma, \mu, L}$ and disjoint subsets $\{\Theta_R: R\in \mathcal{R}\}$ of $\{1, \cdots, d\}$ such that for all $(x_1, \cdots, x_d)\in X^{d, 2}_{F_l, \delta, \sigma, \mu, L}$, $\Theta_R^*= \Theta_R (\subseteq \Omega_R)$ for each $R\in \mathcal{R}$ and
$$\prod_{R\in \mathcal{R}} e^{\kappa |\Omega_R|}\cdot N (\mathcal{U}^d, X^{d, 2}_{F_l, \delta, \sigma, \mu, L})\ge N (\mathcal{U}^d, X^{d, 1}_{F_l, \delta, \sigma, \mu, L}),$$
which implies
\begin{eqnarray} \label{1107221650}
N (\mathcal{U}^d, X^d_{F_l, \delta, \sigma, \mu, L})&\le & N (\mathcal{U}^d, X^{d, 2}_{F_l, \delta, \sigma, \mu, L})\cdot \prod_{R\in \mathcal{R}} e^{\kappa |\Omega_R|}\cdot e^{\kappa d}\ (\text{using \eqref{1107221645}})\nonumber \\
&\le & N (\mathcal{U}^d, X^{d, 2}_{F_l, \delta, \sigma, \mu, L})\cdot e^{2 \kappa d}.
\end{eqnarray}

Now for each $R\in \mathcal{R}$ we construct $C_{R, 1}, \cdots, C_{R, l}\subseteq \Theta_R$ as in the beginning of the proof.
 Observe that $F_1\subseteq \cdots\subseteq F_l, \sigma (F_l) \Theta_R, R\in \mathcal{R}$ are pairwise disjoint and
 \begin{equation} \label{1107230131}
 \sum_{k= 1}^l |\sigma (F_k) C_{R, k}|\ge d (\mu (R)- 2 \eta)
 \end{equation}
 for each $R\in \mathcal{R}$, from the construction it is not hard to obtain
 \begin{equation} \label{1107230027}
 |J|\le 2 d |\mathcal{R}| \eta\ \text{where}\ J= \{1, \cdots, d\}\setminus \bigcup\limits_{R\in \mathcal{R}} \bigcup\limits_{k= 1}^l \sigma (F_k) C_{R, k},
 \end{equation}
 and
 \begin{equation} \label{1107230041}
 \sum_{k= 1}^l |F_k|\cdot |C_{R, k}|\le \frac{1}{1- \eta} \sum_{k= 1}^l |\sigma (F_k) C_{R, k}|\le \frac{d (\mu (R)+ 2 |\mathcal{R}| \eta)}{1- \eta}
 \end{equation}
 for each $R\in \mathcal{R}$ (applying \eqref{1107230131} to each $R'\in \mathcal{R}\setminus \{R\}$).

 Let $R\in \mathcal{R}, k= 1, \cdots, l$ and $c\in C_{R, k}$. For each $(x_1, \cdots, x_d)\in X^{d, 2}_{F_l, \delta, \sigma, \mu, L}$, by the selection of $\Theta_R$ we have $g_R (x_c)> \frac{1}{2}$ (and so $x_c\in B_R$) and
$$\max\limits_{s\in F_l} \rho (x_{\sigma_s (c)}, s x_c)< \sqrt{\delta}.$$
Then by the selection of $\delta$ we could cover
\begin{eqnarray*}
& & \{(x_s)_{s\in \sigma (F_k) c}: (x_1, \cdots, x_d)\in X^{d, 2}_{F_l, \delta, \sigma, \mu, L}\} \\
&\subseteq & \left\{(x_s)_{s\in \sigma (F_k) c}: \max_{s\in F_l} \rho (x_{\sigma_s (c)}, s x)< \delta_2\ \text{for some}\ x\in B_R\right\}
\end{eqnarray*}
by at most $|\mathcal{U}_{R, F_k}|$ elements of $\mathcal{U}^{\sigma (F_k) c}$, and so we could cover
$$\{(x_s)_{s\in \sigma (F_k) C_{R, k}}: (x_1, \cdots, x_d)\in X^{d, 2}_{F_l, \delta, \sigma, \mu, L}\}$$
by at most $|\mathcal{U}_{R, F_k}|^{|C_{R, k}|}$ elements of $\mathcal{U}^{\sigma (F_k) C_{R, k}}$. Thus by the selection of $\eta$ and the construction of $\mathcal{U}_{R, F_k}, R\in \mathcal{R}, k= 1, \cdots, l$ we obtain
\begin{eqnarray*} \label{1107230047}
& &\hskip -26pt \log N (\mathcal{U}^d, X^d_{F_l, \delta, \sigma, \mu, L})\nonumber \\
&\le & \log \left(\prod_{R\in \mathcal{R}} \prod_{k= 1}^l |\mathcal{U}_{R, F_k}|^{|C_{R, k}|}\cdot |\mathcal{U}|^{|J|}\cdot e^{2 \kappa d}\right)\ (\text{using \eqref{1107221650}})\nonumber \\
&\le & \sum_{R\in \mathcal{R}} \sum_{k= 1}^l |F_k| (\xi_R+ \kappa)|C_{R, k}|+ 2 d |\mathcal{R}| \eta \log |\mathcal{U}|+ 2 \kappa d\ (\text{using \eqref{1107212014} and \eqref{1107230027}})\nonumber \\
&\le & \sum_{R\in \mathcal{R}} \frac{d (\mu (R)+ 2 |\mathcal{R}| \eta)}{1- \eta}\cdot (\xi_R+ \kappa)+ 3 \kappa d\ (\text{using \eqref{1107230041}})\nonumber \\
&\le & \sum_{R\in \mathcal{R}} d\cdot \mu (R) (\xi_R+ 2 \kappa)+ 3 \kappa d\nonumber \\
&\le & d \left(\int_X h^a_{\mu_x} (G, \mathcal{U}) d \mu (x)+ 6 \kappa\right)\ (\text{using \eqref{1107230111}}).
\end{eqnarray*}
Then the conclusion follows directly from the above estimation.
\end{proof}

The following result is \cite[Lemma 3.7]{HYZ}. In fact, \cite[Lemma 3.7]{HYZ} considered the case that $G$ is infinite, whereas, the proof of it works for the case that $G$ is finite.

\begin{lem} \label{1107230225}
Let $\mu\in \mathcal{M} (X, G), M\in\N$ and $\epsilon> 0$. Then there exists $\delta> 0$ such that if $\mathcal{U}= \{U_1, \cdots, U_M\}\in \mathcal{C}_X$ and $\mathcal{V}= \{V_1, \cdots,V_M\}\in \mathcal{C}_X$ satisfy $\mu (\mathcal{U}\Delta \mathcal{V})\doteq \sum\limits_{m= 1}^M \mu (U_m\Delta V_m)< \delta$ then $|h_\mu^a (G, \mathcal{U})- h_\mu^a (G, \mathcal{V})|\le \epsilon$.
\end{lem}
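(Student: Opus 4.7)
The plan is to reduce the $L^1$-continuity of $h_\mu^a(G,\cdot)$ on covers to the classical $L^1$-continuity of entropy on partitions, using the Romagnoli characterization \eqref{1107160141} as a bridge between covers and finite families of ordered partitions.

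First I would verify the partition analogue: if $\alpha=\{A_1,\ldots,A_M\}$ and $\beta=\{B_1,\ldots,B_M\}$ are finite Borel partitions of $X$ with $\mu(\alpha\Delta\beta)\doteq\sum_m\mu(A_m\Delta B_m)<\eta$, then
$$|h_\mu^a(G,\alpha)-h_\mu^a(G,\beta)|\le \phi(\eta/2)+(\eta/2)\log(M-1),$$
where $\phi$ denotes the binary entropy function. This is obtained from the standard chain $H_\mu(\alpha_F)\le H_\mu(\beta_F)+H_\mu(\alpha_F\mid\beta_F)\le H_\mu(\beta_F)+|F|H_\mu(\alpha\mid\beta)$, division by $|F|$, passage to the Ornstein--Weiss limit giving $h_\mu^a(G,\alpha)\le h_\mu^a(G,\beta)+H_\mu(\alpha\mid\beta)$, and Fano's inequality bounding $H_\mu(\alpha\mid\beta)$, using $\mu(\alpha\ne\beta)=\mu(\alpha\Delta\beta)/2<\eta/2$. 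The crucial point is that the resulting bound is independent of $F$.

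Next I would pair $\mathcal{U}$ and $\mathcal{V}$ via the bijection of their Romagnoli families. For each permutation $\pi$ of $\{1,\ldots,M\}$, let $\alpha_\pi^{\mathcal{W}}$ be the ordered cutoff partition whose $k$-th atom is $W_{\pi(k)}\setminus\bigcup_{j<k}W_{\pi(j)}$, for $\mathcal{W}\in\{\mathcal{U},\mathcal{V}\}$; by \cite[Proposition 6]{R}, one may arrange $P(\mathcal{W})\subseteq\{\alpha_\pi^{\mathcal{W}}\}_\pi$. A straightforward inductive symmetric-difference estimate yields $\mu(\alpha_\pi^{\mathcal{U}}\Delta\alpha_\pi^{\mathcal{V}})\le 2\sum_m\mu(U_m\Delta V_m)<2\delta$ uniformly in $\pi$. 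The partition continuity from the preceding paragraph, applied with $\eta=2\delta$, then gives $|h_\mu^a(G,\alpha_\pi^{\mathcal{U}})-h_\mu^a(G,\alpha_\pi^{\mathcal{V}})|\le\epsilon/2$ uniformly in $\pi$, as soon as $\delta$ is small enough depending only on $M$ and $\epsilon$.

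The main obstacle is the cover-level identity $h_\mu^a(G,\mathcal{W})=\min_\pi h_\mu^a(G,\alpha_\pi^{\mathcal{W}})$, required to transfer the partition-level bound back to covers. The direction $\le$ is immediate from $\alpha_\pi^{\mathcal{W}}\succeq\mathcal{W}$. The reverse is delicate because applying \eqref{1107160141} to $\mathcal{W}_F$ produces cutoff partitions from orderings of the $|\mathcal{W}|^{|F|}$-element cover $\mathcal{W}_F$, which are not products of orderings of $\mathcal{W}$ and thus do not correspond to the $\alpha_\pi^{\mathcal{W}}$'s. Following the Ornstein--Weiss-type greedy argument of \cite[Lemma 3.7]{HYZ}, from an optimal ordering of $\mathcal{W}_F$ one builds an iterated product of optimal orderings of $\mathcal{W}$ along the F\o lner sequence whose cutoff partition agrees with the optimal one up to an error in $H_\mu$ of order $o(|F|)$, which vanishes upon division by $|F|$. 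With the identity in hand, selecting $\pi^*$ realizing $h_\mu^a(G,\mathcal{U})=h_\mu^a(G,\alpha_{\pi^*}^{\mathcal{U}})$ yields
$$h_\mu^a(G,\mathcal{V})\le h_\mu^a(G,\alpha_{\pi^*}^{\mathcal{V}})\le h_\mu^a(G,\alpha_{\pi^*}^{\mathcal{U}})+\epsilon/2=h_\mu^a(G,\mathcal{U})+\epsilon/2,$$
and symmetrically $h_\mu^a(G,\mathcal{U})\le h_\mu^a(G,\mathcal{V})+\epsilon/2$, completing the argument.
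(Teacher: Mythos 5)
The paper gives no argument of its own here --- it simply quotes \cite[Lemma 3.7]{HYZ} and remarks that the proof there also covers finite $G$ --- so your proposal has to stand on its own. Its first two steps do: the partition-level $L^1$-continuity via $H_\mu(\alpha_F)\le H_\mu(\beta_F)+|F|H_\mu(\alpha\mid\beta)$ and Fano is correct, and so is the estimate $\mu(\alpha_\pi^{\mathcal U}\Delta\alpha_\pi^{\mathcal V})\le 2\mu(\mathcal U\Delta\mathcal V)$ for matched cutoff partitions. The genuine gap is the step you yourself flag as the main obstacle, namely $h_\mu^a(G,\mathcal W)\ge\min_\pi h_\mu^a(G,\alpha_\pi^{\mathcal W})$. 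This is not a routine rearrangement: it is precisely the Huang--Ye ``local variational relation'' $h^-=h^+$ in its amenable-group form, one of the principal theorems of \cite{HYZ}, whose known proofs go through a Karpovsky--Milman-type combinatorial counting lemma, the pointwise ergodic theorem, and ergodic decomposition (the last being available in the present paper only for infinite $G$, while the lemma is also invoked for finite $G$). Your one-sentence sketch --- building ``an iterated product of optimal orderings'' from an optimal ordering of $\mathcal W_F$ with $o(|F|)$ loss --- is exactly the assertion to be proved, not a proof of it, and the citation you attach to it is circular, since \cite[Lemma 3.7]{HYZ} is the statement under discussion. As written, the hardest and only nontrivial step of the argument is unsupported.

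The identity is also avoidable, and avoiding it gives an elementary proof. Fix $F$ and let $\beta\in\mathcal P_X$ with $\beta\succeq\mathcal V_F$ and $H_\mu(\beta)=H_\mu(\mathcal V_F)$. For each $g\in F$, since $\beta\succeq g^{-1}\mathcal V$, group the atoms of $g\beta$ according to an element of $\mathcal V$ containing them to get a partition $\beta_g\succeq\mathcal V$ with $\beta\succeq g^{-1}\beta_g$. Writing $C_0=\bigcup_m(U_m\Delta V_m)$ (so $\mu(C_0)<\delta$), one can choose $\alpha_g\succeq\mathcal U$ whose atoms are the sets $B\setminus C_0$, $B\in\beta_g$, together with the traces on $C_0$ of a fixed partition refining $\mathcal U$; then $H_\mu(\alpha_g\mid\beta_g)\le H_\mu(\{C_0,C_0^c\})+\mu(C_0)\log M$. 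Consequently $\gamma:=\beta\vee\bigvee_{g\in F}g^{-1}\alpha_g\succeq\mathcal U_F$ and, using monotonicity of conditional entropy in the conditioning partition and $G$-invariance of $\mu$,
$$H_\mu(\mathcal U_F)\le H_\mu(\gamma)\le H_\mu(\beta)+\sum_{g\in F}H_\mu(g^{-1}\alpha_g\mid g^{-1}\beta_g)\le H_\mu(\mathcal V_F)+|F|\bigl(H_\mu(\{C_0,C_0^c\})+\delta\log M\bigr).$$
Dividing by $|F|$, passing to the limit, and symmetrizing gives the lemma with no appeal to $h^-=h^+$ and uniformly in whether $G$ is finite or infinite. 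I would recommend you either adopt this direct route or cite the identity as the theorem it is rather than sketching it.
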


Now with the help of the above lemma, let us finish the proof of Theorem \ref{1107101633}.

\begin{proof}[Proof of Theorem \ref{1107101633}]
Using \eqref{1107142149} and Lemma \ref{1107131641}, Lemma \ref{1107131627}, we obtain directly Theorem \ref{1107101633} for all $\mathcal{U}\in \mathcal{C}_X^o$, which implies $h_\mu (G, \mathcal{V})\le h_\mu^a (G, \mathcal{V})$.

Now say $\mathcal{V}= \{V_1, \cdots, V_M\}, M\in \N$. For each $\epsilon> 0$ let $\delta> 0$ be given by Lemma \ref{1107230225}. By the regularity of $\mu$, there exist a compact subset $K_m\subseteq V_m$ for each $m= 1, \cdots, M$ such that
\begin{equation} \label{1107231846}
\mu (U)< \frac{\delta}{M},\ \text{where}\ U= \bigcup_{m= 1}^M V_m\setminus K_m.
\end{equation}
For each $m= 1, \cdots, M$, set $U_m= K_m\cup U$ and $\mathcal{U}= \{U_1, \cdots, U_M\}$. It is easy to see $\mathcal{U}\in \mathcal{C}_X^o$ and $\mathcal{V}\succeq \mathcal{U}$, additionally, from \eqref{1107231846} one has $\mu (\mathcal{U}\Delta \mathcal{V})< \delta$, and so
$$h_\mu^a (G, \mathcal{V})\le h_\mu^a (G, \mathcal{U})+ \epsilon= h_\mu (G, \mathcal{U})+ \epsilon\le h_\mu (G, \mathcal{V})+ \epsilon.$$
By the arbitrariness of $\epsilon> 0$ we obtain the conclusion.
\end{proof}

\vskip 16pt

\section*{Acknowledgements}

The author would like to thank Professor Hanfeng Li for many useful discussions during the preparations of
this manuscript, thank Professor Xiangdong Ye for his constant encouragement to this topic in the past several years,
and also thank Professor Wen Huang for his comments to this preprint.
Part of this work was carried out when the author was a Post Doctoral Research Fellow at University of New South Wales, and he thanks the dynamics group there, especially Professors Anthony H. Dooley and Valentyn Ya. Golodets, for their hospitality.

The author was supported by FANEDD (No. 201018), NSFC (No.
10801035) and a grant from Chinese Ministry of Education (No. 200802461004).

\vskip 16pt

\bibliographystyle{amsplain}

\end{document}